\newtheorem{Th}{Theorem}[section]
\newtheorem{Cor}[Th]{Corollary}
\newtheorem{Lem}[Th]{Lemma}
\newtheorem{Prop}[Th]{Proposition}
\newtheorem{Def}{Definition}[section]
\newtheorem{Ex}{Example}[section]
\newtheorem{Conj}{Conjecture}[section]
\newtheorem*{Rem}{Remark}
\numberwithin{equation}{section}
\begin{document}
\newcommand{\xym}[1]{\begin{center}\leavevmode \xymatrix{#1} \end{center}}
\newcommand{\diff}[2]{\frac{\partial #1}{\partial #2}}
\newcommand{\vare}{\varepsilon}
\newcommand{\secp}[1]{\left((u(x)-#1)\partial_x+\frac{1}{2}u^\prime(x)+\frac{\vare^2}{8}\partial_x^3\right)}
\newcommand{\qa}{\alpha}
\newcommand{\qb}{\beta}
\newcommand{\qd}{\delta}
\newcommand{\qg}{\gamma}
\newcommand{\qs}{\sigma}
\newcommand{\qt}{\tau}
\newcommand{\qth}{\theta}
\newcommand{\qe}{\varepsilon}
\newcommand{\qz}{\zeta}
\newcommand{\qp}{\partial}
\newcommand{\qo}{\omega}
\newcommand{\contr}{\mathbin{\raisebox{\depth}{\scalebox{1}[-1]{$\lnot$}}}}
\newcommand{\Qg}{\Gamma}
\newcommand{\ql}{\lambda}
\newcommand{\Qd}{\Delta}
\newcommand{\Qo}{\Omega}
\title{Super tau-covers of bihamiltonian integrable hierarchies}

\author{Si-Qi Liu,  Zhe Wang, Youjin Zhang}
\keywords{Frobenius manifold, bihamiltonian structure, Virasoro symmetry, integrable hierarchy, super tau-cover}
\dedicatory{Dedicated to the memory of Boris Anatol'evich Dubrovin}
\maketitle

\begin{abstract}
We consider a certain super extension, called the super tau-cover, of a bihamiltonian integrable hierarchy which contains the Hamiltonian structures including both the local and non-local ones as odd flows. 
In particular, we construct the super tau-cover of the principal hierarchy associated with an arbitrary Frobenius manifold, and the super tau-cover of the Korteweg-de Vries (KdV) hierarchy. We also show that the Virasoro symmetries of these bihamiltonan integrable hierarchies can be extended to symmetries of the associated super tau-covers.
\end{abstract}
\tableofcontents
\date{\today}

\section{Introduction}
The deep interrelation between 2D topological field theories (TFT) and integrable systems has been a focus of study during the past thirty years, see \cite{buryak2015double, buryak2012polynomial,dijkgraaf1992intersection, dubrovin1992integrable, dubrovin1996geometry, dubrovin1998bihamiltonian, dubrovin2001normal, eguchi1995genus,  givental2005simple, kontsevich1992intersection, witten1990two} and references therein. A class of integrable systems that play important roles in this study consists of bihamiltonian integrable hierarchies of hydrodynamic type and their deformations. Each of these integrable hierarchies of hydrodynamic type is associated to a Frobenius manifold which characterizes a 2D TFT at tree level approximation, and under the assumption of semisimplicity of the Frobenius manifold a certain deformation, called the topological deformation, of the integrable hierarchy of hydrodynamic type controls the full 2D TFT, i.e. the partition function of the 2D TFT is the tau function of a particular solution of the deformed integrable hierarchy. The integrable hierarchy of hydrodynamic type is called the principal hierarchy of the associated Frobenius manifold, and its topological deformation is specified by the condition that the Virasoro symmetries can be represented as linear actions on its tau function. This condition of linearization of the Virasoro symmetries leads to a so called quasi-Miura transformation which transforms the principal hierarchy to its topological deformation. 
On the other hand, the principal hierarchy has a bihamiltonian structure of hydrodynamic type which is given by the flat metric and the intersection form of the Frobenius manifold, and its deformations can also be obtained from deformations of the bihamiltonian structure of hydrodynamic type \cite{dubrovin1996geometry, dubrovin1998bihamiltonian, dubrovin2001normal}. 

As it was proved in \cite{DLZ-1, LZ}, the moduli space of the infinitesimal deformations of semisimple bihamiltonian structures of hydrodynamic type with $n$ components is parametrized by $n$ one-variable functions which are called the central invariants of the deformations, and it was also conjectured that the same moduli space characterizes the full deformations. This conjecture was proved in \cite{liu2013bihamiltonian} for the bihamiltonian structure of the dispersionless KdV hierarchy. For the general case, Carlet, Posthuma and Shadrin proved this conjecture in \cite{carlet2018deformations} by introducing some spectral sequences on the complex constructed in \cite{liu2013bihamiltonian}. For a semisimple Frobenius manifold, the bihamiltonian structure of the associated principal hierarchy is semisimple, and moreover it possesses the so called flat exactness property, a notion which is introduced in \cite{dubrovin2018bihamiltonian}. It was shown in \cite{dubrovin2018bihamiltonian} that for 
the deformations of a flat exact semisimple bihamiltonian structure of hydrodynamic type with constant central invariants, the corresponding deformations of the principal hierarchy possess tau functions. An important question is whether the deformations of the principal hierarchy also 
possess Virasoro symmetries, and if so, how do these Virasoro symmetries act on the tau functions. In particular, we conjecture that when the central invariants of the deformed bihamiltonian structure are equal to $\frac1{24}$, the deformation of the principal hierarchy has Virasoro symmetries, and 
these Virasoro symmetries act linearly on its tau function. Thus in this case the deformed principal hierarchy is conjectured to be equivalent to the topological deformation that is obtained from the principal hierarchy by using the quasi-Miura transformation. 

The main purpose of the present paper is to propose an approach to the study of the question mentioned above. For any given Frobenius manifold, we are to construct a certain super extension of the associated principal hierarchy, in which the bihamiltonian structures of the principal hierarchy are represented by the odd flows. We then construct a tau-cover of this super extension, called the super tau-cover of the principal hierarchy, and show that the Virasoro symmetries of the principal hierarchy can be lifted to symmetries of its super tau-cover. In particular, the odd flows representing the Hamiltonian structures commute with the Virasoro symmetries of the super tau-cover of the principal hierarchy. 

We hope that we could have a better understanding of the relationship between the bihamiltonian structures and the Virasoro symmetries of the deformations of the principal hierarchy by studying deformations of the super tau-cover of the principal hierarchy.
For the example of one-dimensional Frobenius manifold, we give a 
deformation of the super tau-cover of the associated principal hierarchy by using a super extension of the Lax pair of the KdV hierarchy. This deformation is uniquely determined by the condition of linearization of the Virasoro symmetries.

The paper is organized as follows. In Sec.\,\ref{sec2}, we recall the formulation of evolutionary PDEs and their Hamiltonian structures in terms of the infinite jet space of a supermanifold. In Sec.\,\ref{sec3} and Sec.\,\ref{sec4}, we construct the super tau-cover of the principal hierarchy associated with an arbitrary Frobenius manifold and prove the main results of the paper. In Sec.\,\ref{sec5}, we present a deformation of the super tau-cover of the principal hierarchy associated to the 1-dimensional Frobenius manifold which is the super tau-cover of the KdV hierarchy. In Sec.\,\ref{sec6} we give some concluding remarks.
\vskip 1ex

\noindent\textbf{Acknowledgements.}
We would like to thank Boris Dubrovin for very helpful discussions during the summer of 2018 when Y.Z. visited SISSA. Y.Z. also thanks SISSA for the hospitality extended to him during his visit.
This work is supported by NSFC No.\,11771238 and No.\,11725104.


\section{Super variables and Hamiltonian structures}
\label{sec2}
In this section, we recall the formulation of evolutionary PDEs and their Hamiltonian structures in terms of the infinite jet space of a supermanifold, one can refer to \cite{dubrovin2018bihamiltonian, liu2011jacobi, liu2013bihamiltonian} for more details. 

Let $M$ be an $n$-dimensional smooth manifold, then the associated infinite jet bundle $J^\infty(M)$ is the fiber bundle over $M$ with the fiber $\mathbb R^\infty$. If $U\times \mathbb R^\infty$ and $V\times \mathbb R^\infty$ are two local trivializations with charts $(u^\qa;u^{\qa,s})$ and $(v^\qa;v^{\qa,s})$, where $\qa = 1,\cdots, n$ and $s\geq 1$, then the transition functions are given by the chain rule:
\begin{equation*}
v^{\qa,1} = \diff{v^\qa}{u^\qb}u^{\qb,1};\quad v^{\qa,2} = \diff{v^\qa}{u^\qb}u^{\qb,2}+\frac{\qp^2 v^\qa}{\qp u^\qb\qp u^\qg}u^{\qb,1}u^{\qg,1},\cdots.
\end{equation*}
Here and henceforth summations over repeated Greek upper and lower indices are assumed.

Denote by $\hat M$ the super manifold of dimension $(n|n)$ obtained by reversing the parity of fibers of the cotangent bundle $T^*M$. For the associated infinite jet bundle $J^\infty(\hat M)$, we can choose  $\{u^{\qa,s},\qth_\qa^s \mid \qa = 1,2,\cdots,n;s \geq 0\}$ as its local coordinates, here $(u^{\qa,0};\theta^0_\qa)$ are the local coordinates of the base manifold $\hat M$ which we also denote by $(u^\qa;\theta_\qa)$.
We call $\qth^s_\qa$ super variables due to the following anti-symmetric condition:
\begin{equation*}
\qth^s_\qa\qth^t_\qb +\qth^t_\qb\qth^s_\qa = 0.
\end{equation*}

Let us consider the ring of differential polynomials $\mathcal{\hat A}(M)$ which is locally defined by
\begin{equation}\label{zh-3}
C^\infty(U)\otimes \mathbb C[u^{\qa,s},\qth_\qa^t\mid \qa=1,\dots, n, s\ge 1, t\ge 0].
\end{equation}
It has a super gradation defined by the assignment $\deg{\qth_\qa^s} = 1$, $\deg{u^{\qa, s}}=0$ and the set of homogeneous elements of degree $p$ is denoted by $\mathcal{\hat A}^p(M)$. The global vector field 
\begin{equation}\label{zh-4}
\partial = \sum_{s\geq 0}u^{\qa,s+1}\diff{}{u^{\qa,s}}+\qth_\qa^{s+1}\diff{}{\qth_\qa^s}
\end{equation}
on $J^\infty(\hat M)$ induces a derivation on $\mathcal{\hat A}(M)$.
We will also use the notation $\qp_x$ to denote $\qp$ when we consider 
evolutionary PDEs. The quotient space 
\begin{equation}\label{zh-5}
\mathcal{\hat F}(M):= \mathcal{\hat A}(M)/\partial\mathcal{\hat A}(M)
\end{equation}
also admits a super gradation induced from that of $\mathcal{\hat A}(M)$, and we denote the set of homogeneous elements of degree $p$ by $\mathcal{\hat F}^p(M)$. For an element $f\in\mathcal{\hat A}(M)$ we denote its image in $\mathcal{\hat F}(M)$ by $\int f$ and we call it a local functional of $\hat M$. An important fact is that $\mathcal{\hat F}(M)$ admits a graded Lie algebra structure defined by the Schouten-Nijenhuis bracket
\begin{equation*}
[P,Q] = \int \left(\frac{\qd P}{\qd\qth_\qa}\frac{\qd Q}{\qd u^\qa}+(-1)^p\frac{\qd P}{\qd u^\qa}\frac{\qd Q}{\qd \qth_\qa}\right),\quad \forall P\in \mathcal{\hat F}^p(M),\, \forall Q\in \mathcal{\hat F}^q(M).
\end{equation*}
Here the variational derivatives are defined by
\begin{equation*}
\frac{\qd P}{\qd u^\qa} = \sum_{s\geq 0}(-\qp)^s\diff{\tilde P}{u^{\qa,s}},\quad \frac{\qd P}{\qd \qth_\qa} = \sum_{s\geq 0}(-\qp)^s\diff{\tilde P}{\qth_\qa^s},
\end{equation*}
with $\tilde P \in \mathcal{\hat A}(M)$ being any lift of $P\in\mathcal{\hat F}(M)$. One can refer to \cite{liu2011jacobi} for more differential operators defined on $\mathcal{\hat F}$ and some useful identities satisfied by these operators. 

To each local functional 
$X\in\mathcal{\hat F}^1(M)$ we can associate a system of evolutionary PDEs of the form
\[\frac{\qp u^\qa}{\qp t}=X^\qa,\quad \qa=1,\dots,n,\]
here $X^\qa$ are given by the coefficients of $X$ represented in the form
$X=\int X^\qa \qth_\qa$ with the replacement $u^{\qa,s}\mapsto 
\qp_x^s u^\qa$.
Thus we also call $X$ an evolutionary PDE. We call $X$ a Hamiltonian evolutionary PDE if there exist $P\in \mathcal{\hat F}^2(M)$ and 
$H\in\mathcal{\hat F}^0(M)$ which satisfy the following conditions:
\[ X=-[H, P],\quad [P, P]=0.\]
Here $P$ and $H$ are called the Hamiltonian structure and the Hamiltonian
of $X$ respectively. We can represent $P$ and $H$ in the form
\[ P=\frac12 \int\sum_{s\geq 0} P_s^{\qa\qb}\qth_\qa\qth_\qb^s,\quad 
H=\int h,\]
then the Hamiltonian evolutionary PDE can be represented as
\[\frac{\qp u^\qa}{\qp t}=\mathcal{P}^{\qa\qb}\frac{\delta H}{\delta u^\qb}\quad 
\textrm{with}\quad \mathcal{P}^{\qa\qb}=\sum_{s\ge 0} P^{\qa\qb}_s\qp_x^s,\quad
\frac{\delta H}{\delta u^\qb}=\sum_{s\ge 0} (-\qp_x)^s\frac{\qp h}{\qp u^{\qb,s}}.\]
The differential operator $\mathcal P = (\mathcal{P}^{\qa\qb})$ is called the Hamiltonian operator associated with the Hamiltonian structure $P$. The evolutionary PDE $X$ is called a bihamiltonian system if there exist
$P_0, P_1\in \mathcal{\hat F}^2(M)$ and $H, G\in \mathcal{\hat F}^0(M)$ such that
\[X=-[H, P_0]=-[G, P_1],\quad [P_0, P_0]=[P_1, P_1]=[P_0, P_1]=0.\]
Under some additional conditions, the bihamiltonian structure of the evolutionary PDEs provides a bihamiltonian recursion relation which 
generates an infinite sequence of conserved quantities. The corresponding Hamiltonian flows, together with the originally given one, are mutually commutative, so we usually call such a family of Hamiltonian flows a bihamiltonian integrable hierarchy. 

\begin{Ex}
Let $M$ be a smooth manifold of one dimension, and $(u;\theta)$ be a system of local coordinates of $\hat{M}$. Denote by $\{\qp_x^s u,\ \theta^s\mid s\ge 0\}$ the local coordinates of $J^\infty(\hat{M})$ with $(\qp_x^0 u; \theta^0)=(u;\theta)$. Consider the local functionals
\begin{equation*}
X_0 = \int u_x\qth,
\quad X_n=\frac{2^n}{(2n+1)!!}\int \left(\mathcal{R}^n u_x\right)\theta,\quad n\ge 2,
\end{equation*}
where 
\[\mathcal{R}=\frac{\qe^2}8 \qp_x^2+u+\frac12 u_x\qp_x^{-1}.\]
These local functionals correspond to the KdV hierarchy
\[\frac{\qp u}{\qp t_0}=u_x,\quad \frac{\qp u}{\qp t_1}=u u_x+\frac{\qe^2} {12}u_{xxx},\dots\]
which has a bihamiltonian structure given by the following local functionals 
\begin{equation}
\label{temp}
P_0 = \frac{1}{2}\int \qth\qth^1,\quad P_1 = \frac{1}{2}\int \left(u\qth\qth^1+\frac{\qe^2}{8}\qth\qth^3\right).
\end{equation}
The KdV hierarchy can be represented in terms of bihamiltonian flows as follows:
\begin{equation}\label{zh-1}
\frac{\qp u}{\qp t_p}=\mathcal{P}_0\frac{\delta H_p}{\delta u}
=\left(p+\frac12\right)^{-1}\mathcal{P}_1\frac{\delta H_{p-1}}{\delta u},\quad p\ge 0,
\end{equation}
where the Hamiltonian operators are given by 
\begin{equation}\label{kdv-bho}
\mathcal{P}_0=\qp_x,\quad \mathcal{P}_1=u\qp_x+\frac12 u_x+\frac{\qe^2}8 \qp_x^3,
\end{equation}
and the Hamiltonians can be obtained from
\[H_{-1}=\int u,\quad H_0=\int\left(\frac12 u^2+\frac{\qe^2}{12} u_{xx}\right)\]
by using the bihamiltonian recursion relation given in \eqref{zh-1}.
\end{Ex}

For a local functional $P\in\mathcal{\hat F}^p(M)$, we define a vector field on $J^\infty(\hat M)$ by
\begin{equation}\label{zh-2}
D_P = \sum_{s\geq 0}\partial^s\left(\frac{\qd P}{\qd \qth_\qa}\right)\diff{}{u^{\qa,s}}+(-1)^p\partial^s\left(\frac{\qd P}{\qd u^\qa}\right)\diff{}{\qth_\qa^s}.
\end{equation}
The following relations are satisfied by any given local functionals $P\in \mathcal{\hat F}^p(M)$ and $Q\in\mathcal{\hat F}^q(M)$:
\begin{align}
&[P,Q] = \int D_P(\tilde Q),\\
&(-1)^{p-1}D_{[P,Q]} = D_P\comp D_Q-(-1)^{(p-1)(q-1)}D_Q\comp D_P,\label{ham-vect}
\end{align}
where $\tilde Q \in \mathcal{\hat A}(M)$ is a lift of $Q\in\mathcal{\hat F}(M)$. Motivated by the form of the vector field $D_P$ given in \eqref{zh-2}, we associate to $P$ a system of evolutionary partial differential equations as follows:
\begin{equation*}
\diff{u^\qa}{t_P} = \frac{\qd P}{\qd\qth_\qa},\quad
\diff{\qth_\qa}{t_P} = (-1)^p\frac{\qd P}{\qd u^\qa},\quad \qa=1,\dots,n.
\end{equation*}
We will also denote this flow by $\frac{\qp}{\qp t_P}=D_{P}$. If we have two commutative local functionals $P\in\mathcal{\hat F}^p(M), Q\in\mathcal{\hat F}^q(M)$ with $[P,Q] = 0$, then from \eqref{ham-vect} it follows that the flows $\diff{}{t_P}$ and $\diff{}{t_Q}$ commute with each other.

In particular, for a bihamiltonian integrable hierarchy with bihamiltonian structure $P_0, P_1\in\mathcal{\hat F}^2(M)$, we have the flows
\begin{equation}\label{zh-7}
\diff{u^\qa}{\qt_i} = \frac{\qd P_i}{\qd\qth_\qa},\quad
\diff{\qth_\qa}{\qt_i} = \frac{\qd P_i}{\qd u^\qa},\quad i = 0,1.
\end{equation}
Denote by $\mathcal{P}_0=(\mathcal{P}_0^{\qa\beta})$, $\mathcal{P}_1=(\mathcal{P}_1^{\qa\beta})$ the Hamiltonian operators associated to $P_0, P_1$ respectively, then the flows $\frac{\qp u^\qa}{\qp \tau_0}$ and $\frac{\qp u^\qa}{\qp \tau_1}$ satisfy the recursion relation
\[\frac{\qp u^\qa}{\qp \tau_1}=\mathcal{P}_1^{\qa\beta}\theta_\beta=\left(\mathcal{P}_1\comp\mathcal{P}_0^{-1}\right)^\qa_\gamma\mathcal{P}_0^{\gamma\beta}\theta_\beta =\left(\mathcal{P}_1\comp\mathcal{P}_0^{-1}\right)^\qa_\gamma\frac{\qp u^\gamma}{\qp \tau_0}.\]
In order to study the Virasoro symmetries of the bihamiltonian integrable hierarchy, we need to continue the above recursion procedure to 
consider flows of the form
\[\frac{\qp u^\qa}{\qp \tau_p}=\left(\mathcal{P}_1\comp\mathcal{P}_0^{-1}\right)^\qa_\gamma\frac{\qp u^\gamma}{\qp \tau_{p-1}},\quad p\ge 2.\]
However, these flows are in general nonlocal, namely they can not be represented in terms of local functionals of $\hat{M}$. Due to this reason, we introduce a new family of super variables 
\begin{equation}\label{zh-6}
\{\qs_{\qa,k}^s\mid\qa = 1,\cdots,n;\ k\geq 0,s\geq 0\}
\end{equation}
with $\qs_{\qa,0}^s = \qth_\qa^s$, and we will also denote $\qs_{\qa,k}^0$ by $\qs_{\qa,k}$. 
We enlarge the ring $\mathcal{\hat A}(M)$ to include the new super variables by 
replacing \eqref{zh-3} with
\begin{equation}
C^\infty(U)\otimes \mathbb C[u^{\qa,s}, \qs^t_{\qa,k}\mid \qa=1,\dots, n, s\ge 1,k\ge 0,  t\ge 0],
\end{equation}
and replace the vector field $\partial$ defined in \eqref{zh-4} by
\begin{equation}
\label{dx}
\partial = \sum_{s\geq 0}u^{\qa,s+1}\diff{}{u^{\qa,s}}+\sum_{s,k\geq 0}\qs_{\qa,k}^{s+1}\diff{}{\qs_{\qa,k}^s}.
\end{equation}
We also require the super variables $\sigma_{\alpha,k}^s$ satisfy the following relations:
\begin{equation}\label{gen-biham-rec}
\mathcal{P}^{\qa\qb}_{0}\qs_{\qb,k+1} = \mathcal{P}^{\qa\qb}_{1}\qs_{\qb,k},\quad \qa = 1,\cdots, n.
\end{equation} 
Then we have the new quotient space $\mathcal{\hat F}(M)$ defined as in \eqref{zh-5}, and we still call its elements local functionals of $\hat{M}$.

From the equation \eqref{zh-7}, we see that the Hamiltonian structure $P_0$ and $P_1$ correspond to the odd flow
$\frac{\qp u^{\qa}}{\qp \tau_0}$ and  $\frac{\qp u^{\qa}}{\qp \tau_1}$, so we can regard the higher odd flows
$\frac{\qp u^{\qa}}{\qp \tau_p}$ as higher nonlocal Hamiltonian structures. However, it is not easy to define
variational derivatives on the new quotient space $\mathcal{\hat F}(M)$ so that we can write them as derivations $D_{P_n}$ for certain
bivectors $P_n$.


\section{Super extension of the principal hierarchy of a Frobenius manifold}
\label{sec3}
In this section we construct the super extension of the principal hierarchy of a Frobenius manifold. We first recall the basic facts about Frobenius manifold and the associated principal hierarchy. The main references for the definitions and properties of Frobenius manifolds are \cite{dubrovin1996geometry,dubrovin1999painleve}, and for the principal hierarchies one can refer to \cite{dubrovin2001normal}.

A (complex) Frobenius manifold $M$ is a complex analytic manifold together with an analytic family of Frobenius algebra structures on the tangent bundle $TM$. Such a manifold must be equipped with a flat (pseudo-Riemannian) metric $\eta$ and an Euler vector field $E$. Assuming $\mathrm{dim} M = n$, then locally a Frobenius manifold can be characterized by a solution $F(v^1,\cdots,v^n)$ of the WDVV associativity equations \cite{dijkgraaf1991topological, witten1990structure}, here $v^1,\dots, v^n$ are flat coordinates of the flat metric $\eta$. In terms of the function $F$, which is called the potential of the Frobenius manifold, the components of the flat metric have the expressions
\begin{equation*}
\eta_{\qa\qb} =\qp_\qa\qp_\qb\qp_1 F(v),\quad \qp_\qa = \diff{}{v^\qa},
\end{equation*}
and the Frobenius algebra structure is given by
\begin{equation*}
\qp_\qa\cdot\qp_\qb = c_{\qa\qb}^{\qg}\qp_\qg,\quad c_{\qa\qb}^{\qg} = \eta^{\qg\qz}\qp_\qz\qp_\qa\qp_\qb F(v),
\end{equation*}
here $(\eta^{\qa\qb})=(\eta_{\qa\qb})^{-1}$. The potential $F$ satisfies the quasi-homogeneous condition
\[ E(F)=(3-d)F+\frac12 A_{\qa\qb} v^\qa v^\qb+B_\qa v^\qa+C,\]
where the Euler vector field $E$ has the form
\begin{equation*}
E = \sum_{\qa=1}^n \left(\left(1-\frac{d}{2}-\mu_\qa\right)v^\qa+r_\qa\right)\qp_\qa,
\end{equation*}
$A_{\qa\qb}, B_\qa, C, d$ are some constants, and $d$ is called the charge of the Frobenius manifold. The diagonal matrix $\mu = \mathrm{diag}(\mu_1,\cdots,\mu_n)$ is part of the isomonodromy data of $M$ where we will always take $\mu_1 = -d/2$. The tensors $\eta_{\qa\qb}$ and $c_{\qa\qb}^\qg$ are required to be homogeneous along the Lie derivative of the Euler vector field:
\begin{equation*}
\mathcal{L}_E \eta_{\qa\qb} = (2-d)\eta_{\qa\qb},\quad \mathcal L_E c^\qg_{\qa\qb} = c^\qg_{\qa\qb}.
\end{equation*}
It is then easy to derive the following identities:
\begin{equation}
\label{mu-eta}
(\mu_\qa+\mu_\qb)\eta_{\qa\qb} = 0,\quad\forall\,\qa,\qb = 1,\cdots,n.
\end{equation}
\begin{equation}
\label{c-hom}
E(c_{\qa\qb}^\qg) = (\mu_\qa+\mu_\qb-\mu_\qg-\mu_1)c_{\qa\qb}^\qg,\quad\forall\,\qa,\qb,\qg = 1,\cdots,n.
\end{equation}

The intersection form of the Frobenius manifold $M$ is defined by
\begin{equation*}
g(\qo_1,\qo_2) = E\contr(\qo_1\cdot\qo_2),\quad \qo_1,\qo_2\in T^*M,
\end{equation*}
where the Frobenius algebra structure on the cotangent bundle $T^*M$ is defined via the isomorphism between $TM$ and $T^*M$ induced by the flat metric $\eta$. In terms of the flat coordinates $v^1,\dots,v^n$, the components of $g$ are given by 
\[g^{\qa\qb} = E^\qe c^{\qa\qb}_\qe,\] 
with $c^{\qa\qb}_\qe = \eta^{\qa\qz}c_{\qz\qe}^\qb$. It was shown in \cite{dubrovin1996geometry} that $(\eta_{\qa\qb})$ and $(g_{\qa\qb})=(g^{\qa\qb})^{-1}$ form a flat pencil of metrics. By using the notation of the last section, we
formulate this fact as the following theorem.
\begin{Th}[\cite{dubrovin1996geometry}]
Let $\Qg_{\qa\qb}^\qg$ be the Christoffel symbols of the Levi-Civita connection of $(g_{\qa\qb})$, and denote $\Qg^{\qa\qb}_\qg = -g^{\qa\qe}\Qg_{\qe\qg}^\qb$. Then the local functionals $P_0,P_1\in\mathcal{\hat F}^2(M)$ defined by
\begin{align}
\label{1ham}
P_0 &= \frac{1}{2}\int \eta^{\qa\qb}\qth_\qa\qth_\qb^1,\\
\label{2ham}
P_1 &= \frac{1}{2}\int g^{\qa\qb}\qth_\qa\qth_\qb^1+\Qg_{\qg}^{\qa\qb}v^{\qg,1}\qth_\qa\qth_\qb
\end{align}
form an exact bihamiltonian structure, i.e. they satisfy the 
relations
\begin{align*}
&[P_i,P_j] = 0,\quad i,j = 0,1;\\
& P_0 = [X_1, P_1]\  \textrm{with}\ X_1=\int\theta_1.
\end{align*}
\end{Th}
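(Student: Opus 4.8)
The plan is to verify the three families of identities directly from the definitions, using the known fact (Dubrovin) that $(\eta_{\qa\qb})$ and $(g_{\qa\qb})$ form a flat pencil of metrics, and translating that geometric statement into the bracket language of Section~\ref{sec2}. First I would recall that for a metric $g^{\qa\qb}$ with Levi-Civita connection $\Qg^{\qa\qb}_\qg$ (in the contravariant normalization $\Qg^{\qa\qb}_\qg=-g^{\qa\qe}\Qg^\qb_{\qe\qg}$), the local functional $P=\frac12\int\left(g^{\qa\qb}\qth_\qa\qth^1_\qb+\Qg^{\qa\qb}_\qg v^{\qg,1}\qth_\qa\qth_\qb\right)$ satisfies $[P,P]=0$ if and only if the metric is flat; this is the standard correspondence between Poisson brackets of hydrodynamic type and flat metrics. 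Applying this with $g\mapsto\eta$ (whose Christoffel symbols vanish in the flat coordinates $v^\qa$, so that the second term in $P_0$ drops out) gives $[P_0,P_0]=0$, and applying it with $g\mapsto g$ gives $[P_1,P_1]=0$. For the cross bracket $[P_0,P_1]=0$ I would invoke the flat pencil property: $\eta^{\qa\qb}+\ql\, g^{\qa\qb}$ is flat for all $\ql$, so $[P_0+\ql P_1,P_0+\ql P_1]=0$ for all $\ql$, and extracting the coefficient of $\ql$ yields $2[P_0,P_1]=0$. This disposes of the bihamiltonian relations; alternatively one can cite the earlier Theorem directly, since the flat-pencil statement quoted there is exactly what is needed.

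The substantive part is the exactness relation $P_0=[X_1,P_1]$ with $X_1=\int\qth_1$. Here I would compute $[X_1,P_1]$ from the Schouten--Nijenhuis formula. Since $X_1=\int\qth_1$ has degree $1$, its variational derivatives are $\qd X_1/\qd\qth_\qa=\qd_1^\qa$ (a constant) and $\qd X_1/\qd u^\qa=0$, so the associated vector field $D_{X_1}$ is simply $\qp/\qp v^1$ acting on coefficients (it shifts $v^1$ by a constant and kills all jet variables and super variables). Thus $[X_1,P_1]=\int D_{X_1}(\tilde P_1)$ reduces to $\frac12\int\left(\qp_1 g^{\qa\qb}\cdot\qth_\qa\qth^1_\qb+\qp_1\Qg^{\qa\qb}_\qg\cdot v^{\qg,1}\qth_\qa\qth_\qb\right)$. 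The claim $P_0=[X_1,P_1]$ therefore amounts to the two tensorial identities $\qp_1 g^{\qa\qb}=\eta^{\qa\qb}$ and $\qp_1\Qg^{\qa\qb}_\qg=0$ (the latter up to terms that vanish in $\mathcal{\hat F}$ after integration by parts). The first is immediate from $g^{\qa\qb}=E^\qe c^{\qa\qb}_\qe$ together with $E^\qe=\left(1-\frac d2-\mu_\qe\right)v^\qe+r_\qe$ and the fact that $c^{\qa\qb}_1=\eta^{\qa\qb}$ (since $\qp_1$ is the unit of the Frobenius algebra), once one checks that the $v$-dependence of $c^{\qa\qb}_\qe$ does not contribute: $\qp_1 g^{\qa\qb}=c^{\qa\qb}_1+E^\qe\qp_1 c^{\qa\qb}_\qe=\eta^{\qa\qb}+E^\qe\qp_1 c^{\qa\qb}_\qe$, and one must argue the last term vanishes. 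For the second identity I would use the explicit formula for the Christoffel symbols of the intersection form in flat coordinates (Dubrovin's formula expressing $\Qg^{\qa\qb}_\qg$ in terms of $c$, $\mu$ and $E$), from which $\qp_1$-independence, or $\qp_1$-exactness, follows.

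The main obstacle I expect is the bookkeeping for $\qp_1 c^{\qa\qb}_\qe$ and $\qp_1\Qg^{\qa\qb}_\qg$: one needs the identity $\qp_1 c^{\qg}_{\qa\qb}=0$, which holds because $\qp_1\eta_{\qa\qb}=0$ and $c^\qg_{\qa\qb}=\eta^{\qg\qz}\qp_\qz\qp_\qa\qp_\qb F$ with $F$ satisfying $\qp_1^3 F=0$ up to the normalization $\eta_{\qa\qb}=\qp_1\qp_\qa\qp_\qb F$ being constant — more precisely $\qp_1 c^\qg_{\qa\qb}=\eta^{\qg\qz}\qp_1\qp_\qz\qp_\qa\qp_\qb F=\eta^{\qg\qz}\qp_\qz\qp_\qa\eta_{\qb 1}=0$. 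Granting $\qp_1 c^\qg_{\qa\qb}=0$, one gets $\qp_1 g^{\qa\qb}=c^{\qa\qb}_1=\eta^{\qa\qb}$ cleanly, and a parallel computation with the standard expression $\Qg^{\qa\qb}_\qg=\left(\tfrac12+\mu_\qb\right)c^{\qa\qb}_\qg$ (valid in flat coordinates, see \cite{dubrovin1996geometry,dubrovin2001normal}) gives $\qp_1\Qg^{\qa\qb}_\qg=\left(\tfrac12+\mu_\qb\right)\qp_1 c^{\qa\qb}_\qg=0$. Assembling these, $[X_1,P_1]=\frac12\int\eta^{\qa\qb}\qth_\qa\qth^1_\qb=P_0$, completing the proof. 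The only genuinely delicate point is making sure the normalization conventions for $\Qg^{\qa\qb}_\qg$ and the sign in $E$ are consistent with those of \eqref{2ham}, so that no stray factor appears.
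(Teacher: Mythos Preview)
The paper does not actually prove this theorem: it is stated with a citation to \cite{dubrovin1996geometry} and no argument is given. Your proposal supplies a correct direct verification, and the pieces you need are precisely the formulae the paper records a few lines later as \eqref{gam-v}: $g^{\qa\qb}=E^\qe c^{\qa\qb}_\qe$ and $\Qg^{\qa\qb}_\qg=(\tfrac12-\mu_\qb)c^{\qa\qb}_\qg$. Note that the paper's convention has the sign $\tfrac12-\mu_\qb$, not $\tfrac12+\mu_\qb$ as you wrote; this is harmless for your argument since what you use is only $\qp_1 c^{\qa\qb}_\qg=0$, which you derived correctly from the constancy of $\eta_{\qa\qb}=\qp_1\qp_\qa\qp_\qb F$. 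Your treatment of the bihamiltonian relations via the Dubrovin--Novikov correspondence and the flat-pencil trick is standard and correct, and the exactness computation $D_{X_1}=\qp/\qp v^1$, $\qp_1 g^{\qa\qb}=\eta^{\qa\qb}$, $\qp_1\Qg^{\qa\qb}_\qg=0$ is exactly the content that \cite{dubrovin1996geometry} packages as ``flat exact pencil''.
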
 

The principal hierarchy of the Frobenius manifold $M$ is an integrable hierarchy with bihamiltonian structure $P_0, P_1$, and Hamiltonians $H_{\qa, p}$ defined in terms of the flat coordinates of the deformed flat connection of $M$. Recall that on $M\times\mathbb C^*$ a flat connection $\tilde\nabla$ is constructed in \cite{dubrovin1996geometry} as follows:
\begin{equation*}
\tilde \nabla_XY = \nabla_XY+z X\cdot Y, 
\ \ 
\tilde\nabla_{\qp_z}X = \qp_zX+E\cdot X-\frac{1}{z}\mu X,\ \ \tilde\nabla_{\qp_z}\qp_z = \tilde \nabla_X\qp_z = 0,
\end{equation*}
here $\nabla$ is the Levi-Civita connection of the flat metric $\eta$, and $X, Y$ are vector fields on $M\times \mathbb C^*$ whose $\qp_z$ components being zero. This deformed flat connection has a system of flat coordinates of the form
\begin{equation*}
\left(\tilde v_1(v,z),\cdots,\tilde v_n(v,z)\right) = (h_1(v,z),\cdots,h_n(v,z))z^\mu z^R,
\end{equation*}
where the matrices $R$ and $\mu$ are the isomonodromy data of the Frobenius manifold $M$ at $z = 0$, and the functions $h_{\qa}(v,z)$ are analytic at $z = 0$ with the series expansions 
\begin{equation}
\label{hamil}
h_\qa(v,z) = \sum_{p\geq 0}h_{\qa,p}(v)z^p.
\end{equation}
The functions $h_{\qa, p}(v)$ satisfy the relations
\begin{align}
\label{hamil-rec}
&\qp_\qa\qp_\qb h_{\qg,p+1} = c^\ql_{\qa\qb}\qp_\ql h_{\qg,p},\\
\label{hamil-ini}
&h_{\qa,0} = \eta_{\qa\qg}v^\qg,\quad \qp_1h_{\qa,p+1} = h_{\qa,p},
\end{align}
and the quasi-homogeneous condition
\begin{equation}
\label{homog}
E(\qp_\qb h_{\qa,p}) = (p+\mu_\qa+\mu_\qb)\qp_\qb h_{\qa,p}+\sum_{k=1}^p(R_k)^\qg_\qa\qp_\qb h_{\qg,p-k},
\end{equation}
where the matrices $R_k$ is determined by the decomposition $R = R_1+\cdots+R_m$ satisfying certain conditions, see \cite{dubrovin1996geometry} for details. The flat coordinates of the deformed flat connection are also required to satisfy the normalization condition 
\begin{equation}
\label{norm}
\langle\nabla h_\qa(v,z),\nabla h_\qb(v,-z)\rangle = \eta_{\qa\qb}.
\end{equation}

We are now ready to present the definition of the principal hierarchy.

\begin{Def}
The principal hierarchy associated to an $n$-dimensional Frobenius manifold $M$ is given by the following commuting flows:
\begin{equation}
\label{ph}
\diff{v^\qa}{t^{\qb,p}} =\eta^{\qa\qg}\frac{\qp}{\qp x}\left(\frac{\qp h_{\qb, p+1}}{\qp v^\qg}\right),\quad \qa, \qb=1,\dots, n,\, p\ge 0.
\end{equation}
\end{Def}
From \eqref{hamil-rec}--\eqref{homog} it follows that the flows of the principal hierarchy are bihamiltonian systems with Hamiltonian structures
$P_0, P_1$ given in \eqref{1ham} and \eqref{2ham}, they can be represented by the local functionals 
\[X_{\qa,p} = -[H_{\qa,p},P_0]\in\mathcal{\hat F}^1(M)\ \textrm{with}\ H_{\qa,p} = \int h_{\qa,p+1},\]
and satisfy the following bihamiltonian recursion relation
\[[H_{\qa,p-1},P_1]=\left(p+\frac12+\mu_\qa\right)[H_{\qa,p},P_0]+\sum_{k=1}^p \left(R_k\right)^\qg_\qa [H_{\qg, p-k},P_0].\]
The Hamiltonian densities $h_{\qa,p}$ also satisfy the tau-symmetry condition
\[\frac{\qp h_{\qa,p}}{\qp t^{\beta,q}}=\frac{\qp h_{\beta,q}}{\qp t^{\qa,p}},
\quad \qa, \beta=1,\dots, n; \, p, q\ge 0.\]

Following the discussion in Sec.\,\ref{sec2}, we now introduce a set of super variables $\qs_{\qa,k}^s$ by using the bihamiltonian structure 
$(P_0, P_1)$. The relations given in \eqref{gen-biham-rec} can be written as
\begin{equation}
\label{biham-rec}
\eta^{\qa\qb}\qs_{\qb,k+1}^1 = g^{\qa\qb}\qs_{\qb,k}^1+\Qg_\qg^{\qa\qb}v^{\qg,1}\qs_{\qb,k},\quad \qa=1,\dots, n;\, k\geq 0.
\end{equation}
Let us first extend the action of the flows of the principal hierarchy \eqref{ph} to these 
super variables. By using the vector field $D_{X_{\qb, p}}$ defined on $J^\infty(\hat M)$ by the formula \eqref{zh-2}, we can represent the principal hierarchy \eqref{ph} together with the action of the flow $\frac{\qp}{\qp t^{\qb,p}}$ to the super variable $\sigma_{\qa,0}=\theta_\qa$ as follows:
\begin{equation}
\frac{\qp v^\qa}{\qp t^{\qb, p}}=\frac{\delta X_{\qb, p}}{\delta\theta_\qa},\quad\frac{\qp \theta_\qa}{\qp t^{\qb, p}}=-\frac{\delta X_{\qb, p}}{\delta v^\qa}.
\end{equation}
The evolution of $v^\qa, \theta_\qa$ along the flows $\frac{\qp}{\qp t^{\qb,p}}$ and the recursion relation \eqref{biham-rec} lead to the following lemma. 
\begin{Lem}
The following evolutionary equations are compatible with the recursion relation \eqref{biham-rec}:
\begin{equation}
\label{seg-t-p}
\diff{\qs_{\qa,k}}{t^{\qb,p}} = \eta^{\qg\qe}\frac{\qp^2 h_{\qb, p+1}}{\qp v^\qa\qp v^\qe}\qs_{\qg,k}^1,\quad \qa, \qb=1,\dots,n;\, k, p\ge 0.
\end{equation}
\end{Lem}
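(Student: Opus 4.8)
The plan is to establish \eqref{seg-t-p} by induction on $k$, viewing the recursion \eqref{biham-rec} as the relation that propagates the already prescribed evolution of $(v^\qa,\qth_\qa)=(v^\qa,\qs_{\qa,0})$ to the super variables $\qs_{\qa,k}$ with $k\ge 1$; the statement of the Lemma, that \eqref{seg-t-p} is compatible with \eqref{biham-rec}, is then the assertion that this propagation is consistent. For the base case $k=0$ one has $\qs_{\qa,0}=\qth_\qa$ and the prescribed evolution $\qp\qth_\qa/\qp t^{\qb,p}=-\qd X_{\qb,p}/\qd v^\qa$. Using $X_{\qb,p}=-[H_{\qb,p},P_0]$ with $H_{\qb,p}=\int h_{\qb,p+1}$, the explicit form \eqref{1ham} of $P_0$, and the constancy of $\eta^{\qa\qb}$ in the flat coordinates, one finds $X_{\qb,p}=-\int\eta^{\qg\qe}(\qp_\qg h_{\qb,p+1})\qth_\qe^1$, and taking its variational derivative in $v^\qa$ reproduces exactly \eqref{seg-t-p} with $k=0$.

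For the inductive step, suppose \eqref{seg-t-p} holds at level $k$ and apply the derivation $\qp/\qp t^{\qb,p}$ to the recursion \eqref{biham-rec}. Since the first Hamiltonian operator $\mathcal P_0^{\qa\qb}=\eta^{\qa\qb}\qp_x$ has constant coefficients and $\qp/\qp t^{\qb,p}$ commutes with $\qp_x$, the left-hand side becomes $\eta^{\qa\mu}\qp_x(\qp\qs_{\mu,k+1}/\qp t^{\qb,p})$. On the right-hand side the derivation acts on the coefficients $g^{\qa\mu}(v)$, $\Qg_\qg^{\qa\mu}(v)$ and on $v^{\qg,1}$ through the principal hierarchy \eqref{ph}, and on $\qs_{\mu,k}^1$ and $\qs_{\mu,k}$ through the inductive hypothesis; one then rewrites the second derivatives of $h_{\qb,p+1}$ by the Hamiltonian recursion \eqref{hamil-rec} in the form $\qp_\qa\qp_\qe h_{\qb,p+1}=c^\ql_{\qa\qe}\qp_\ql h_{\qb,p}$, and substitutes \eqref{biham-rec} again to re-express the level-$(k+1)$ variables through level-$k$ data. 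The goal is to check that the outcome equals $\eta^{\qa\mu}\qp_x$ applied to the right-hand side of \eqref{seg-t-p} at level $k+1$; because $(\eta^{\qa\mu})$ is invertible and $\qp_x$ has no kernel among differential polynomials of positive super degree, this pins $\qp\qs_{\mu,k+1}/\qp t^{\qb,p}$ to \eqref{seg-t-p} and at the same time yields the asserted compatibility.

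The step I expect to be the main obstacle is the identity demanded by the inductive step. After the substitutions above, the cancellation that must take place is independent of $k$ and is the differential-polynomial incarnation of the fact that each flow $\qp/\qp t^{\qb,p}$ preserves the second Hamiltonian structure, i.e.\ that $[X_{\qb,p},P_1]=0$ --- the Hamiltonian-operator identity underlying this relation, applied now to the super variables $\qs_{\cdot,k}$ rather than to $\qth=\qs_{\cdot,0}$. The vanishing of $[X_{\qb,p},P_1]$ (and of $[X_{\qb,p},P_0]$) is available because the flows of the principal hierarchy are bihamiltonian with structures $(P_0,P_1)$; the work is to unwind it into an identity among $g^{\qa\qb}=E^\qe c^{\qa\qb}_\qe$, $\Qg_\qg^{\qa\qb}=-g^{\qa\qe}\Qg_{\qe\qg}^\qb$, the structure constants $c^\ql_{\qa\qe}$ and the densities $h_{\qb,p}$, which in turn rests on the WDVV equations (associativity and commutativity of the Frobenius algebra), the flatness of $\eta$, and \eqref{hamil-rec}. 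Conceptually, \eqref{biham-rec} says that $\qs_{\cdot,k}$ is obtained from $\qth$ by iterating the bihamiltonian recursion, and \eqref{seg-t-p} is the statement that the dual of the linearization of each flow of the principal hierarchy commutes with this recursion --- the hereditary property of the pair $(P_0,P_1)$ --- which is precisely what allows the level-zero evolution to propagate unambiguously to all levels.
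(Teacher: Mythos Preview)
Your proposal is correct and ultimately rests on the same identity as the paper, but you package it differently. The paper's proof is a four-line direct check: it writes out what compatibility means,
\[
\eta^{\qa\qb}\diff{\qs_{\qb,k+1}^1}{t^{\qe,p}}=\diff{}{t^{\qe,p}}\bigl(g^{\qa\qb}\qs_{\qb,k}^1+\Qg_\qg^{\qa\qb}v^{\qg,1}\qs_{\qb,k}\bigr),
\]
substitutes \eqref{seg-t-p} on both sides, and verifies the resulting identity using only the explicit Frobenius-manifold formulae $g^{\qa\qb}=E^\qe c^{\qa\qb}_\qe$, $\Qg^{\qa\qb}_\qg=(\tfrac12-\mu_\qb)c^{\qa\qb}_\qg$, and $\qp_\qg g^{\qa\qb}=\Qg^{\qa\qb}_\qg+\Qg^{\qb\qa}_\qg$. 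No induction, no appeal to $[X_{\qb,p},P_1]=0$.

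Your inductive framing and the interpretation of the required cancellation as the component form of $[X_{\qb,p},P_1]=0$ are both valid and supply a conceptual explanation for \emph{why} the direct computation succeeds: the flows of the principal hierarchy preserve both Hamiltonian structures, so the evolution of $\qth_\qa$ dictated by $D_{X_{\qb,p}}$ propagates consistently through the bihamiltonian recursion. The crucial step that makes this rigorous is your remark that after substituting \eqref{biham-rec} to eliminate $\qs_{\cdot,k+1}^1$, both sides become differential polynomials in $\qs_{\cdot,k}^s$ alone, so the identity is formally the same for every $k$ and reduces to the $k=0$ case. The paper's route is shorter and more self-contained; yours trades a few lines of explicit calculation for an invocation of the bihamiltonian property, which is available but not stated in quite that form at this point in the paper.
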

\begin{proof}
By compatibility, we mean that
\begin{equation*}
\eta^{\qa\qb}\diff{\qs_{\qb,k+1}^1}{t^{\qe,p}} = \diff{}{t^{\qe,p}}(g^{\qa\qb}\qs_{\qb,k}^1+\Qg_\qg^{\qa\qb}v^{\qg,1}\qs_{\qb,k}),\quad \qa,\qe = 1,\cdots,n.
\end{equation*}
This can be checked directly by using the following formulae:
\begin{align}
\label{gam-v}
&g^{\qa\qb} = E^\qe c^{\qa\qb}_\qe,\quad \Qg^{\qa\qb}_\qg = \left(\frac{1}{2}-\mu_\qb\right)c^{\qa\qb}_\qg;\\
\label{g-gam}
&\qp_\qg g^{\qa\qb} = \Qg_\qg^{\qa\qb}+\Qg_\qg^{\qb\qa}.
\end{align}
The lemma is proved.
\end{proof}
\begin{Rem}
From \eqref{ph}, \eqref{seg-t-p} and \eqref{hamil-ini} it follows that the vector field $\diff{}{t^{1,0}}$ coincides with the vector field $\qp$ that is defined in \eqref{dx}. For this reason, we will identify $t^{1,0}$ with the spatial variable $x$. We will also use prime to denote the derivative with respect to $t^{1,0}$.
\end{Rem}

Now let us consider the flows \eqref{zh-7} associated to the local functionals $P_0, P_1$ defined in \eqref{1ham} and \eqref{2ham} respectively. By using the formulae \eqref{gam-v}, \eqref{g-gam}  we have
\begin{align*}
&\frac{\qp v^\qa}{\qp\qt_0}=\eta^{\qa\qg}\theta_\qb^1,\quad 
\frac{\qp \theta_\qa}{\qp\qt_0}=0;\\
&\frac{\qp v^\qa}{\qp\qt_1}=g^{\qa\qb} \theta_\qb^1+\Gamma^{\qa\qb}_\qg v^{\qg,1}\theta_\qb, 
\quad \frac{\qp \theta_\qa}{\qp\qt_1}=\Gamma^{\qg\qb}_{\qa}\theta_\qb\theta_\qg^1.
\end{align*}
These flows satisfy the following recursion relation:
\[\frac{\qp v^\qa}{\qp\qt_1}=\mathcal{R}^\qa_\qb\left(\frac{\qp v^\qb}{\qp\qt_0}\right)=\eta^{\qa\qg}\sigma^1_{\qg,1},\quad
\mathcal{R}=\mathcal{P}_1\circ  \mathcal{P}_0^{-1}.
\]
Motivated by this recursion relation and the defining relation \eqref{biham-rec} for the super variables $\qs_{\qb, k}$, we introduce the following flows
associated to the nonlocal Hamiltonian structures $\mathcal{P}_m=\mathcal{R}^m \mathcal P_0\,(m\ge 2)$:
\begin{align}
&\diff{v^\qa}{\qt_m} = \eta^{\qa\qb}\qs_{\qb,m}^1,\quad \qa=1,\dots,n;\, m\ge 0.\label{v-tau}\\
&\diff{\qs_{\qa,k}}{\qt_m} = -\diff{\qs_{\qa,m}}{\qt_k} = \Qg^{\qg\qb}_\qa\sum_{i = 0}^{m-k-1}\qs_{\qb,k+i}\qs_{\qg,m-i-1}^1,\quad\qa=1,\dots,n;\,  0\le k\leq m.\label{seg-tau-p}
\end{align}

\begin{Lem}\label{zh-9}
The flows \eqref {v-tau} commute with the ones given in \eqref{ph}.
\end{Lem}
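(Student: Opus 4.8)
The plan is to show that the two vector fields $\frac{\partial}{\partial \tau_m}$ (defined by \eqref{v-tau}--\eqref{seg-tau-p}) and $\frac{\partial}{\partial t^{\beta,p}}$ (defined by \eqref{ph} and \eqref{seg-t-p}) commute when acting on all the coordinates $v^\gamma$ and $\sigma_{\gamma,k}^s$ of the enlarged jet space. Since all flows commute with $\partial_x$ (the total derivative in \eqref{dx}), it suffices to check commutativity on the degree-zero generators $v^\gamma$ and $\sigma_{\gamma,k}=\sigma^0_{\gamma,k}$, the action on $\sigma^s_{\gamma,k}$ being recovered by applying $\partial_x^s$. So the proof reduces to verifying two families of identities:
\begin{align*}
&\frac{\partial}{\partial \tau_m}\left(\frac{\partial v^\gamma}{\partial t^{\beta,p}}\right) = \frac{\partial}{\partial t^{\beta,p}}\left(\frac{\partial v^\gamma}{\partial \tau_m}\right),\\
&\frac{\partial}{\partial \tau_m}\left(\frac{\partial \sigma_{\gamma,k}}{\partial t^{\beta,p}}\right) = \frac{\partial}{\partial t^{\beta,p}}\left(\frac{\partial \sigma_{\gamma,k}}{\partial \tau_m}\right).
\end{align*}

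First I would treat the $v$-identity. The left side is $\frac{\partial}{\partial \tau_m}\bigl(\eta^{\gamma\delta}\partial_x(\partial_\delta h_{\beta,p+1})\bigr)$; since $\eta^{\gamma\delta}$ is constant and $\partial_x$ commutes with $\frac{\partial}{\partial\tau_m}$, this is $\eta^{\gamma\delta}\partial_x\bigl(\partial_\delta\partial_\epsilon h_{\beta,p+1}\cdot \frac{\partial v^\epsilon}{\partial\tau_m}\bigr) = \eta^{\gamma\delta}\partial_x\bigl(\partial_\delta\partial_\epsilon h_{\beta,p+1}\,\eta^{\epsilon\nu}\sigma^1_{\nu,m}\bigr)$. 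The right side is $\frac{\partial}{\partial t^{\beta,p}}\bigl(\eta^{\gamma\delta}\sigma^1_{\delta,m}\bigr) = \eta^{\gamma\delta}\partial_x\bigl(\frac{\partial\sigma_{\delta,m}}{\partial t^{\beta,p}}\bigr) = \eta^{\gamma\delta}\partial_x\bigl(\eta^{\mu\epsilon}\partial_\delta\partial_\epsilon h_{\beta,p+1}\,\sigma^1_{\mu,m}\bigr)$, using \eqref{seg-t-p}. These two expressions agree after relabelling indices, so the $v$-identity is immediate. The real content is the $\sigma$-identity.

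For the $\sigma$-identity, I would expand both sides using the explicit formulas. The left side $\frac{\partial}{\partial\tau_m}\bigl(\eta^{\gamma\epsilon}\partial_\alpha\partial_\epsilon h_{\beta,p+1}\,\sigma^1_{?,k}\bigr)$ (with the index structure of \eqref{seg-t-p}, where I use $\alpha$ for $\gamma$ of that formula) produces two kinds of terms: those where $\frac{\partial}{\partial\tau_m}$ hits the coefficient $\partial_\alpha\partial_\epsilon h_{\beta,p+1}$, giving a third derivative of $h$ contracted with $\frac{\partial v}{\partial\tau_m}$, and the term where it hits $\sigma^1_{\gamma,k}$, giving $\partial_x\bigl(\frac{\partial\sigma_{\gamma,k}}{\partial\tau_m}\bigr) = \partial_x\bigl(\Gamma^{\nu\delta}_\gamma\sum_i\sigma_{\delta,k+i}\sigma^1_{\nu,m-i-1}\bigr)$ from \eqref{seg-tau-p} (after relabelling). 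The right side $\frac{\partial}{\partial t^{\beta,p}}\bigl(\Gamma^{\nu\delta}_\alpha\sum_i\sigma_{\delta,k+i}\sigma^1_{\nu,m-i-1}\bigr)$ similarly splits: $\frac{\partial}{\partial t^{\beta,p}}$ acting on $\Gamma^{\nu\delta}_\alpha$ (which depends on $v$) via the chain rule and $\frac{\partial v^\gamma}{\partial t^{\beta,p}}$, and $\frac{\partial}{\partial t^{\beta,p}}$ acting on each $\sigma$ via \eqref{seg-t-p}. The hard part will be matching these two collections of terms: it requires using the relation $\Gamma^{\alpha\beta}_\gamma = (\tfrac12 - \mu_\beta)c^{\alpha\beta}_\gamma$ and $\partial_\gamma g^{\alpha\beta} = \Gamma^{\alpha\beta}_\gamma + \Gamma^{\beta\alpha}_\gamma$ from \eqref{gam-v}--\eqref{g-gam}, the Hamiltonian-density recursion $\partial_\alpha\partial_\epsilon h_{\beta,p+1} = c^\lambda_{\alpha\epsilon}\partial_\lambda h_{\beta,p}$ from \eqref{hamil-rec}, the defining recursion \eqref{biham-rec} for the $\sigma$'s (to convert $\sigma^1_{\delta,k+1}$ into $g$- and $\Gamma$-terms), the WDVV-type symmetry and associativity of the structure constants $c^\gamma_{\alpha\beta}$, and the telescoping structure of the sum $\sum_{i=0}^{m-k-1}$. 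I expect the telescoping sum to be the organizing principle: after substituting the recursions, most terms cancel in pairs across neighbouring values of $i$, leaving exactly the boundary terms that reproduce the other side. I would also use the previous Lemma (compatibility of \eqref{seg-t-p} with \eqref{biham-rec}) to move freely between $\sigma^1_{\delta,k+1}$ and its expression in lower $k$. Finally, one must check the antisymmetry $\frac{\partial\sigma_{\alpha,k}}{\partial\tau_m} = -\frac{\partial\sigma_{\alpha,m}}{\partial\tau_k}$ is consistent, but that is built into \eqref{seg-tau-p} by construction and needs only be verified not to clash with the $t^{\beta,p}$-flow when $k>m$, which follows by the same computation with $k$ and $m$ exchanged.
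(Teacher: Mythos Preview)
Your reading of the lemma is broader than what the paper intends. The statement ``the flows \eqref{v-tau} commute with the ones given in \eqref{ph}'' refers only to the equality
\[
\frac{\partial}{\partial t^{\epsilon,p}}\frac{\partial v^{\alpha}}{\partial \tau_m}
=\frac{\partial}{\partial \tau_m}\frac{\partial v^{\alpha}}{\partial t^{\epsilon,p}},
\]
i.e.\ commutativity checked on the $v^{\alpha}$ only. The paper's proof is exactly your ``$v$-identity'' paragraph: apply \eqref{seg-t-p} to compute $\frac{\partial}{\partial t^{\epsilon,p}}\sigma^1_{\beta,m}$, and compare. On that part your argument and the paper's coincide verbatim.

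Everything you write about the ``$\sigma$-identity'' is not part of this lemma; it is the content of Theorem~\ref{super-1}, proved later. There the paper does \emph{not} follow the direct expansion you sketch (matching the two sides via the explicit formulae for $\Gamma^{\alpha\beta}_\gamma$, WDVV, and telescoping in $i$). Instead it argues indirectly: from Lemma~\ref{zh-9} one deduces
\[
\frac{\partial \sigma_{\beta,k}}{\partial t^{\alpha,p}}=\frac{\partial}{\partial \tau_k}\bigl(\partial_\beta h_{\alpha,p+1}\bigr),
\]
and then uses the already established identities $\bigl[\frac{\partial}{\partial \tau_m},\frac{\partial}{\partial \tau_k}\bigr]v^{\gamma}=0$ and $\bigl[\frac{\partial}{\partial t^{\alpha,p}},\frac{\partial}{\partial \tau_m}\bigr]v^{\gamma}=0$ to reduce the $\sigma$-commutator (after one $\partial_x$) to zero, first for $k=0$ and then for general $k$ by induction via the recursion \eqref{biham-rec}. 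This avoids all of the structure-constant manipulations you anticipate; the recursion \eqref{biham-rec} and the $v$-identities do the work. Your proposed direct route is plausible but considerably heavier, and is not the argument the paper gives.
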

\begin{proof}
It is a straightforward calculation, by using \eqref{ph} and \eqref{seg-t-p}, to verify
\[\diff{}{t^{\qe,p}}\diff{v^\qa}{\qt_m}=\diff{}{\qt_m}\diff{v^\qa}{t^{\qe,p}}.\]
The lemma is proved.
\end{proof}

\begin{Lem}
\label{Z-lem}
The flows \eqref{seg-tau-p} are compatible with the recursion relation \eqref{biham-rec}.
\end{Lem}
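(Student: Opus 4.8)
The plan is to check the compatibility directly. The flow $\partial/\partial\tau_m$ is defined on the (enlarged, recursion-quotiented) differential ring by prescribing its values on the generators $v^\qa$ and $\qs_{\qa,k}$ through \eqref{v-tau} and \eqref{seg-tau-p}, and extending by the Leibniz rule together with $\partial_{\tau_m}\qp_x=\qp_x\partial_{\tau_m}$; so ``compatible with \eqref{biham-rec}'' means that $\partial/\partial\tau_m$ maps the differential ideal generated by \eqref{biham-rec} into itself. Since that ideal is $\qp_x$-stable it is enough to apply $\partial/\partial\tau_m$ to \eqref{biham-rec} itself. Using $\partial_{\tau_m}\qs_{\qb,k}^s=\qp_x^s(\partial_{\tau_m}\qs_{\qb,k})$ and differentiating the $v$-dependence of $g^{\qa\qb}$ and $\Qg^{\qa\qb}_\qg$, this reduces to verifying, modulo \eqref{biham-rec}, the identity
\begin{align*}
\eta^{\qa\qb}\qp_x\diff{\qs_{\qb,k+1}}{\qt_m}
={}&\left(\qp_\qe g^{\qa\qb}\right)\diff{v^\qe}{\qt_m}\,\qs_{\qb,k}^1+g^{\qa\qb}\qp_x\diff{\qs_{\qb,k}}{\qt_m}\\
&+\left(\qp_\qe\Qg^{\qa\qb}_\qg\right)\diff{v^\qe}{\qt_m}\,v^{\qg,1}\qs_{\qb,k}+\Qg^{\qa\qb}_\qg\diff{v^{\qg,1}}{\qt_m}\,\qs_{\qb,k}+\Qg^{\qa\qb}_\qg v^{\qg,1}\diff{\qs_{\qb,k}}{\qt_m}.
\end{align*}

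Next I would substitute \eqref{v-tau} and \eqref{seg-tau-p} throughout, so that both sides become sums of quadratic monomials in the $\qs$'s with coefficients assembled from $g^{\qa\qb}$, $\Qg^{\qa\qb}_\qg$, their first $v$-derivatives, and $c^{\qa\qb}_\qg,\mu_\qa$. The identities I expect to need are \eqref{gam-v}, \eqref{g-gam}, the symmetry $c^{\qa\qb}_\qg=c^{\qb\qa}_\qg$ and the associativity equations for $c$, together with the homogeneity relations \eqref{mu-eta} and \eqref{c-hom}. Crucially, at several points one has to re-invoke \eqref{biham-rec} to trade a factor $\eta^{\qg\qd}\qs_{\qd,j+1}^1$ for $g^{\qg\qd}\qs_{\qd,j}^1+\Qg^{\qg\qd}_\qe v^{\qe,1}\qs_{\qd,j}$; it is precisely these substitutions that allow the telescoping sums to close up. As a warm-up that feeds into the sign conventions, I would first confirm that the prescription \eqref{seg-tau-p} is internally consistent, i.e. that the displayed formula for $\partial_{\tau_m}\qs_{\qa,k}$ is genuinely antisymmetric under $k\leftrightarrow m$; this again follows from \eqref{biham-rec} and the Frobenius identities.

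The main obstacle is the combinatorial bookkeeping. Because \eqref{seg-tau-p} is given only for $0\le k\le m$ and otherwise by antisymmetry, the verification splits into the cases $m\ge k+1$, $m=k$ and $m<k$; in each of them $\partial_{\tau_m}\qs_{\qb,k+1}$ and $\partial_{\tau_m}\qs_{\qb,k}$ are represented by sums whose index ranges differ (typically $\sum_{i=0}^{m-k-2}$ against $\sum_{i=0}^{m-k-1}$), and the content of the lemma is that the leftover boundary terms, once these ranges are matched, cancel exactly against the $g$- and $\Qg$-derivative terms and the $\partial_{\tau_m}v^{\qg,1}$ term after \eqref{biham-rec} has been applied. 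One has to be attentive to the signs generated by the anticommutativity of the super variables and to the fact that $\Qg^{\qa\qb}_\qg$, unlike $c^{\qa\qb}_\qg$, is not symmetric in its upper indices. It may be convenient to package the $\qs_{\qa,k}$ into a generating series, turning \eqref{biham-rec} into the statement that $\mathcal P_1-z\,\mathcal P_0$ annihilates the series up to an explicit remainder and recasting the sums in \eqref{seg-tau-p} as residues, which makes the telescoping transparent and removes the case split. In any case, the structural reason behind the result is the bihamiltonian compatibility $[P_0,P_1]=0$ --- equivalently the vanishing of the Nijenhuis torsion of the recursion operator $\mathcal R=\mathcal P_1\circ\mathcal P_0^{-1}$ --- which ensures the consistency of the sequence of nonlocal flows it generates; the computation above simply unwinds this fact for the metrics $\eta$ and $g$ of the Frobenius manifold.
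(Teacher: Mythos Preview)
Your plan is correct, and the paper's proof follows precisely the generating-series route you mention at the end as a convenience rather than the direct case-split verification you outline first. The paper packages $\qs_{\qa,k}$ into $c_\qa(\ql)=-\sum_{k\ge0}\qs_{\qa,k}\ql^{-k-1}$, rewrites \eqref{biham-rec} as $(g^{\qa\qb}-\ql\eta^{\qa\qb})c_\qb(\ql)'+\Qg^{\qa\qb}_\qg v^{\qg,1}c_\qb(\ql)=\eta^{\qa\qb}\qs_{\qb,0}^1$, and expresses \eqref{seg-tau-p} through $c_\qa(\ql)$ and the truncations $C^m_\qa(\ql)=(\ql^m c_\qa(\ql))_-$. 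Then, rather than matching terms directly, it adds an undetermined series $Z_\qa(\ql)$ to the flow and computes that compatibility with the recursion forces $(g^{\qa\qb}-\ql\eta^{\qa\qb})Z_\qb(\ql)'+\Qg^{\qa\qb}_\qg v^{\qg,1}Z_\qb(\ql)=0$; the triangular structure in $\ql$ together with the non-degeneracy of $\eta$ then gives $Z=0$ by induction on the coefficients. This device absorbs your ``leftover boundary terms'' into a single homogeneous equation for $Z$ and removes the $m\gtrless k$ case split entirely, and it yields uniqueness of the compatible flow for free. Both arguments rest on the same Frobenius identities you list, and both hide a ``lengthy calculation'' at the substitution step, so the difference is organizational rather than conceptual.
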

\begin{proof}
Introduce the generating functions
\begin{equation}
c_{\qa}(\ql) = -\sum_{m\geq 0}\qs_{\qa,m}\ql^{-m-1},\quad C^m_\qa(\ql) = (\ql^mc_{\qa}(\ql))_-,
\end{equation}
here the subscript ``$-$'' means taking the negative part of the Laurent series with respect to $\ql$. 
By using these generating functions, the flows \eqref{seg-tau-p} can be written as:
\begin{equation}\label{zh-8}
\diff{c_\qa(\ql)}{\qt_m} = \Qg^{\qg\qb}_\qa\left(C^m_\qb(\ql)c_\qg(\ql)^\prime+c_\qb(\ql)C^m_\qg(\ql)^\prime-(\ql^mc_\qb(\ql)c_\qg(\ql)^\prime)_-\right).
\end{equation}
The recursion relation \eqref{biham-rec} gives the following equation satisfied by the generating series:
\begin{equation}
\label{gen-eq}
(g^{\qa\qb}-\ql\eta^{\qa\qb})c_\qb(\ql)^\prime+\Qg_\qg^{\qa\qb}v^{\qg,1}c_\qb(\ql) = \eta^{\qa\qb}\qs_{\qb,0}^1.
\end{equation}

In order to show the compatibility of the flows \eqref{seg-tau-p} with \eqref{gen-eq}, we consider the compatibility of the modified flow
\begin{equation}
\label{temp2-1}
\diff{c_\qa(\ql)}{\tilde\qt_m} = \Qg^{\qg\qb}_\qa\left(C^m_\qb(\ql)c_\qg(\ql)^\prime+c_\qb(\ql)C^m_\qg(\ql)^\prime-(\ql^mc_\qb(\ql)c_\qg(\ql)^\prime)_-\right)+Z_\qa(\ql),
\end{equation}
with \eqref{zh-8}, here $Z_\qa(\ql)= \sum_{k\geq 0}z_{\qb,k}\ql^{-k-1}$ is a Laurent series
with coefficients belonging to $\mathcal{\hat A}(M)$.
Acting $\diff{}{\tilde\qt_m}$ on both sides of \eqref{gen-eq}, and substituting \eqref{temp2-1}, one obtains the following equation satisfied by $Z_\qa(\ql)$ after a lengthy calculation:
\begin{equation*}
(g^{\qa\qb}-\ql\eta^{\qa\qb})Z_\qb(\ql)^\prime+\Qg_\qg^{\qa\qb}v^{\qg,1}Z_\qb(\ql) =0.
\end{equation*}
From the coefficient of $\ql^0$ and the non-degeneracy of $\eta^{\qa\qb}$ it follows that $z_{\qb,0}^\prime = 0$. Since we do not have odd constants in the ring $\mathcal{\hat A}(M)$ except for zero, we must have $z_{\qb,0} = 0$. Now we arrive at the vanishing of $z_{\qb,k}\,(k\ge 1)$ from the coefficients of $\ql^{-k}$ by induction. The lemma is proved.
\end{proof}

\begin{Th}[Super extension of the principal hierarchy]
\label{super-1}
We have the following mutually commuting flows associated with any given Frobenius manifold $M$:
\begin{align*}
&\diff{v^\qa}{t^{\qb,p}} = \eta^{\qa\qg}{(\qp_\ql\qp_\qg h_{\qb,p+1})v^{\ql,1}},\quad \diff{\qs_{\qa,k}}{t^{\qb,p}} = \eta^{\qg\qe}(\qp_\qa\qp_\qe h_{\qb,p+1})\qs_{\qg,k}^1,\\
&\diff{v^\qa}{\qt_m} = \eta^{\qa\qb}\qs_{\qb,m}^1,\quad
\diff{\qs_{\qa,k}}{\qt_m} = -\diff{\qs_{\qa,m}}{\qt_k} = \Qg^{\qg\qb}_\qa\sum_{i = 0}^{m-k-1}\qs_{\qb,k+i}\qs_{\qg,m-i-1}^1,\quad 0\le k\leq m.
\end{align*}
where $\qa,\qb=1,\dots,n$, and $m, p\ge 0$.
\end{Th}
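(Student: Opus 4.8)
The plan is to verify the three families of bracket relations among the flows listed, namely $[\partial_{t^{\alpha,p}},\partial_{t^{\beta,q}}]=0$, $[\partial_{t^{\alpha,p}},\partial_{\tau_m}]=0$, and $\partial_{\tau_l}\partial_{\tau_m}+\partial_{\tau_m}\partial_{\tau_l}=0$, the last of these being the appropriate statement of commutativity since the $\tau$-flows are odd. By construction together with the compatibility lemmas (the Lemma asserting that \eqref{seg-t-p} is compatible with \eqref{biham-rec}, and Lemma \ref{Z-lem}), each of these flows is a well-defined derivation of $\mathcal{\hat A}(M)$ that commutes with $\partial_x$; hence so is any bracket of two of them. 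Now $\mathcal{\hat A}(M)$ is generated by $v^\alpha$, $\theta_\alpha$ and their $\partial_x$-derivatives together with the super variables $\sigma_{\alpha,k}$, $k\ge1$; moreover \eqref{biham-rec} recovers $\sigma_{\alpha,k+1}$ from $v^\alpha$, $\sigma_{\alpha,k}$ and $\partial_x\sigma_{\alpha,k}$ up to an additive constant, while a $\partial_x$-closed element of $\mathcal{\hat A}(M)$ of positive degree in the super variables must be zero. It follows, by induction on $k$, that a bracket which vanishes on $v^\alpha$ and on $\theta_\alpha=\sigma_{\alpha,0}$ vanishes on every $\sigma_{\alpha,k}$, hence identically; so it is enough to evaluate each bracket on $v^\alpha$ and on $\theta_\alpha$.

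The action on $v^\alpha$ is handled quickly. Restricted to $(v,\theta)$, the flow $\partial_{t^{\alpha,p}}$ is the Hamiltonian vector field $D_{X_{\alpha,p}}$ on $J^\infty(\hat M)$ attached to $X_{\alpha,p}\in\mathcal{\hat F}^1(M)$, and since the principal hierarchy is integrable one has $[X_{\alpha,p},X_{\beta,q}]=0$; by \eqref{ham-vect} the fields $D_{X_{\alpha,p}}$ then mutually commute, which gives $[\partial_{t^{\alpha,p}},\partial_{t^{\beta,q}}]=0$ on $v^\alpha$ and $\theta_\alpha$ simultaneously. The relation $[\partial_{t^{\alpha,p}},\partial_{\tau_m}]v^\gamma=0$ is precisely Lemma \ref{zh-9}, and $(\partial_{\tau_l}\partial_{\tau_m}+\partial_{\tau_m}\partial_{\tau_l})v^\alpha=\eta^{\alpha\beta}\partial_x(\partial_{\tau_l}\sigma_{\beta,m}+\partial_{\tau_m}\sigma_{\beta,l})$ vanishes because $\eta^{\alpha\beta}$ is constant and $\partial_{\tau_l}\sigma_{\beta,m}=-\partial_{\tau_m}\sigma_{\beta,l}$ by \eqref{seg-tau-p}.

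What remains is the mixed bracket and the $\tau$-$\tau$ bracket on $\theta_\alpha$. Here I would work with the generating function $c_\alpha(\lambda)=-\sum_{m\ge0}\sigma_{\alpha,m}\lambda^{-m-1}$, on which the $\tau$-flows act by \eqref{zh-8} and, directly from \eqref{seg-t-p}, the $t$-flows act by $\partial_{t^{\beta,p}}c_\alpha(\lambda)=\eta^{\gamma\epsilon}(\partial_\alpha\partial_\epsilon h_{\beta,p+1})c_\gamma(\lambda)^\prime$, with $\theta_\alpha$ appearing as the coefficient of $\lambda^{-1}$ in $c_\alpha(\lambda)$. Introducing a second spectral parameter $w$ and assembling the $\tau$-flows into $\sum_m w^{-m-1}\partial_{\tau_m}$ turns the finite convolution sums in \eqref{seg-tau-p} into closed rational functions of $\lambda$ and $w$ and keeps the computation finite, and the $t$-flows can be similarly packaged through $h_\beta(v,z)$ as in \eqref{hamil}. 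The verification then uses the defining relation \eqref{gen-eq}/\eqref{biham-rec} for $c_\alpha(\lambda)$ — to trade $g^{\alpha\beta}c_\beta(\lambda)^\prime$ for $\eta^{\alpha\beta}c_\beta(\lambda)^\prime$ — the recursions \eqref{hamil-rec}, \eqref{hamil-ini} and the quasi-homogeneity relations \eqref{homog} for the Hamiltonian densities $h_{\alpha,p}$, the structural identities \eqref{gam-v}, \eqref{g-gam}, \eqref{c-hom} for the Christoffel symbols, and the commutativity already obtained on $v$; whenever a bracket is produced only in the form of the homogeneous version of \eqref{gen-eq}, it is seen to vanish by the argument that closes the proof of Lemma \ref{Z-lem}.

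The main obstacle I anticipate is the $\tau$-$\tau$ anticommutativity. The flows \eqref{seg-tau-p} are bilinear in the super variables and carry the Christoffel symbols $\Gamma^{\gamma\beta}_\alpha$, so expanding $\partial_{\tau_l}\partial_{\tau_m}c_\alpha(\lambda)$ yields a large number of terms — some involving $v$-derivatives of $\Gamma^{\gamma\beta}_\alpha$, others carrying a factor $c_\beta(\lambda)c_\gamma(\lambda)^\prime$ or $c_\beta(\lambda)c_\gamma(\lambda)^{\prime\prime}$ — and the antisymmetrization $l\leftrightarrow m$ does not cancel them termwise. Closing the identity requires the flatness and torsion-freeness of the Levi-Civita connection of $g$, which are packaged in $[P_1,P_1]=0$, together with repeated use of \eqref{biham-rec} to rewrite $x$-derivatives of the super variables; arranging the computation so that it terminates rather than generating an ever-lengthening tail of terms is the delicate point, and the two-spectral-parameter generating function is the device I would use to keep it under control.
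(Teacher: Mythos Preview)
Your reduction principle is the same as the paper's: both argue that a bracket of flows which commutes with $\partial_x$ and vanishes on $v^\alpha$ and $\theta_\alpha$ must vanish on every $\sigma_{\alpha,k}$ by induction via \eqref{biham-rec}, and both dispose of the $v^\alpha$ case exactly as you do (Lemma~\ref{zh-9} and the antisymmetry in \eqref{seg-tau-p}). The genuine difference is in how the $\theta_\alpha$ case is handled for the mixed and $\tau$--$\tau$ brackets.

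You propose a direct generating-function computation in two spectral parameters, and you correctly identify the $\tau$--$\tau$ anticommutator as the bottleneck; this route should close, but it is laborious. The paper sidesteps the computation entirely with a bootstrap: since $\sigma_{\beta,k}^1=\eta_{\beta\varepsilon}\,\partial v^\varepsilon/\partial\tau_k$, any bracket applied to $\sigma_{\beta,k}$ becomes, after one $\partial_x$, that bracket applied to $\partial v^\varepsilon/\partial\tau_k$. Concretely, from Lemma~\ref{zh-9} one has $\partial\sigma_{\beta,k}/\partial t^{\alpha,p}=\partial_{\tau_k}(\partial_\beta h_{\alpha,p+1})$; combined with the already established relations $[\partial_{\tau_m},\partial_{\tau_0}]v^\gamma=0$ this yields the swap identity $\partial_{\tau_m}\partial_{t^{\alpha,p}}\sigma_{\beta,0}=-\partial_{\tau_0}\partial_{t^{\alpha,p}}\sigma_{\beta,m}$, which reduces $[\partial_{t^{\alpha,p}},\partial_{\tau_m}]\theta_\beta=0$ to the $m=0$ case (and that case is immediate, since on $(v,\theta)$ the flow $\partial_{\tau_0}$ is $D_{P_0}$). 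The $\tau$--$\tau$ case is then handled ``in a similar way'', again by trading $\sigma$-identities for $v$-identities through $\partial v^\varepsilon/\partial\tau_k$. What this buys is that the quadratic $\Gamma$-terms in \eqref{seg-tau-p} never have to be expanded against each other; the structural input (flatness of $g$, associativity) is used only through the commutativity on $v^\alpha$, which has already absorbed it. Your approach would reprove that same structural content by hand inside the generating-function identity.
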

\begin{proof}
The following identities follow from Lemma \ref{zh-9} and a direct computation:
\begin{equation}
\label{temp2-2}
\left[\diff{}{t^{\qa,p}},\diff{}{t^{\qb,q}}\right]v^\qg = \left[\diff{}{t^{\qa,p}},\diff{}{\qt_k}\right]v^\qg = \left[\diff{}{\qt_m},\diff{}{\qt_k}\right]v^\qg = 0.
\end{equation}

By using Lemma \ref{zh-9} and the fact that the right and side of \eqref{ph} do not depend on the super variables $\qs_{\qa, l}$ we arrive at
the relation
\begin{equation*}
\diff{}{t^{\qa,p}}\diff{}{t^{\qb,q}}\diff{v^\qg}{\qt_k} = \diff{}{t^{\qb,q}}\diff{}{t^{\qa,p}}\diff{v^\qg}{\qt_k}.
\end{equation*}
Thus from \eqref{v-tau} we obtain 
\[\left[\diff{}{t^{\qa,p}},\diff{}{t^{\qb,q}}\right]\qs_{\qg,k}=0.\]

Let us proceed to prove the validity of the commutation relation
\begin{equation}\label{zh-10}
\left[\diff{}{t^{\qa,p}},\diff{}{\qt_m}\right]\qs_{\qb,k}=0.
\end{equation}
It is easy to show the above commutation relation for $m=k=0$. By using the recursion relation \eqref{biham-rec} and induction on $k$ we know that the commutation relation \eqref{zh-10} also holds true
for $m=0$ and $k\ge 1$. To prove the general case, we first note the validity of the following identity due to Lemma \ref{zh-9}:
\begin{equation*}
\diff{\qs_{\qb,k}}{t^{\qa,p}} = \diff{}{\qt_k}(\qp_\qb h_{\qa,p+1}),
\end{equation*}
from this and the relation given in \eqref{temp2-2} it follows that
\begin{equation*}
\diff{}{\qt_m}\diff{\qs_{\qb,0}}{t^{\qa,p}} = \diff{}{\qt_m}\diff{}{\qt_0}(\qp_\qb h_{\qa,p+1}) = -\diff{}{\qt_0}\diff{}{\qt_m}(\qp_\qb h_{\qa,p+1}) = -\diff{}{\qt_0}\diff{\qs_{\qb,m}}{t^{\qa,p}}.
\end{equation*}
By using \eqref{temp2-2} again we obtain
\begin{align*}
&\eta_{\qe\qb}\left(\diff{}{t^{\qa,p}}\diff{\qs_{\qb,0}}{\qt_m}-\diff{}{\qt_m}\diff{\qs_{\qb,0}}{t^{\qa,p}}\right)^\prime = \eta_{\qe\qb}\left(\diff{}{t^{\qa,p}}\diff{\qs_{\qb,0}}{\qt_m}+\diff{}{\qt_0}\diff{\qs_{\qb,m}}{t^{\qa,p}}\right)^\prime\\
&=\diff{}{t^{\qa,p}}\diff{}{\qt_m}\diff{v^\qe}{\qt_0}+\diff{}{\qt_0}\diff{}{t^{\qa,p}}\diff{v^\qe}{\qt_m} = \diff{}{\qt_0}\diff{}{t^{\qa,p}}\diff{v^\qe}{\qt_m}-\diff{}{t^{\qa,p}}\diff{}{\qt_0}\diff{v^\qe}{\qt_m}\\
&=-\eta_{\qe\qb}\left(\diff{}{t^{\qa,p}}\diff{\qs_{\qb,m}}{\qt_0}-\diff{}{\qt_0}\diff{\qs_{\qb,m}}{t^{\qa,p}}\right)^\prime.
\end{align*}
Thus we arrive at the validity of the commutation relation \eqref{zh-10}
for $m\ge 0$, $k=0$. Now by using the recursion relation \eqref{biham-rec} and induction on $k$ again we arrive at the validity of the commutation relation \eqref{zh-10} for general $m\ge 0$, $k\ge 0$.
The commutation relation $[\diff{}{\qt_m},\diff{}{\qt_l}]\qs_{\qa,k} = 0$
can be proved in a similar way.
The theorem is proved.
\end{proof}

\begin{Ex}
\label{kdv}
Consider the $1$-dimensional Frobenius manifold $M$. Its principal hierarchy is the Riemann hierarchy, which is also knows as the dispersionless KdV hierarchy
\begin{equation*}
\diff{v}{t_p} = R_{p+1}^\prime,\quad R_{p} = \frac{v^p}{p!},\quad p\ge 0.
\end{equation*}
Here we denote $v^1$ by $v$, and denote the time variables $t^{1,p}$ by $t_p$. The bihamiltonian structure reads:
\begin{equation*}
P_0 = \frac{1}{2}\int \qth\qth^1;\quad P_1 = \frac{1}{2}\int v\qth\qth^1.
\end{equation*}
The super extension of the principal hierarchy given in Theorem \ref{super-1} has the expression
\begin{align*}
&\diff{v}{t_p} = \frac{v^p}{p!} v',\quad \diff{\qs_k}{t_p} = \frac{v^p}{p!}\qs_k^\prime,\\
&\diff{v}{\qt_m} = \qs_m^\prime,\quad
\diff{\qs_k}{\tau_m} = -\diff{\qs_m}{\tau_k} = \frac{1}{2}\sum_{i=0}^{m-k-1}\qs_{i+k}\qs_{m-i-1}^\prime,\quad \text{for $0\le k\leq m$},
\end{align*}
where we denote $\qs_{1,k}$ by $\qs_k$.
\end{Ex}


\section{The super tau-cover of the principal hierarchy and its Virasoro symmetries}
\label{sec4}
In this section we will construct a tau-cover of the super extension of the principal hierarchy associated with an $n$-dimensional Frobenius manifold $M$ and consider its Virasoro symmetries. We will use the notations that are introduced in the last section. 

Let us first recall the tau-structure of the principal hierarchy.
Consider the functions $\Qo_{\qa,p;\qb,q}(v)$, $\qa,\qb = 1,\cdots,n$; $p,q\geq 0$ determined by the following generating function \cite{dubrovin2001normal}:
\begin{equation}
\label{omega}
\sum_{p\geq 0,q\geq 0}\Qo_{\qa,p;\qb,q}(v)z_1^pz_2^q = \frac{\langle\nabla h_\qa(v,z_1),\nabla h_\qb(v,z_2)\rangle - \eta_{\qa\qb}}{z_1+z_2}.
\end{equation}
Then we have the following identities:
\begin{equation}
\label{omg-ini}
\Qo_{\qa,p;1,0} = h_{\qa,p}(v),\quad \Qo_{\qa,p;\qb,0} = \qp_\qb h_{\qa,p+1}.
\end{equation}
By using these functions, we can define a function $\mathcal F_0(t)$ associated with any solution $v(t)=(v^1(t),\dots, v^n(t))$ of the principal hierarchy \eqref{ph}, such that
\begin{equation*}
\diff{}{t^{\qa,p}}\diff{\mathcal F_0}{t^{\qb,q}} = \Qo_{\qa,p;\qb,q}(v(t)).
\end{equation*}
The function $\mathcal F_0$ is called the genus zero free energy and its exponential $Z = \exp(\mathcal F_0)$ is called the tau function associated with a given solution of the principal hierarchy \eqref{ph}, and the functions $f_{\qa,p} = \diff{\mathcal F_0}{t^{\qa,p}}$ are called the one-point functions which satisfy the following tau-cover of the principal hierarchy:
\begin{align}
\diff{f_{\qa,p}}{t^{\qb,q}} &= \Qo_{\qa,p;\qb,q},\label{zh-11}\\
\diff{v^\qa}{t^{\qb,q}} &= \eta^{\qa\qe}(\qp_\qe\qp_\qg h_{\qb,q+1})v^{\qg,1}.\label{zh-11b}
\end{align}
We want to define a super extension of the principal hierarchy which contains the equations given in Theorem \ref{super-1} and the equations \eqref{zh-11}. To this end, we need to determine the flows $\diff{f_{\qa,p}}{\qt_n}$. By using the identities given in \eqref{omg-ini} we obtain
\begin{equation*}
\left(\diff{f_{\qa,p}}{\qt_n}\right)^\prime = \diff{}{\qt_n}\diff{f_{\qa,p}}{t^{1,0}} = \diff{h_{\qa,p}}{\qt_n}.
\end{equation*}
Hence we need to show that $\diff{h_{\qa,p}}{\qt_n}$ are the $x$-derivatives of some differential polynomials. Indeed, we have the following lemma.
\begin{Lem}
\label{phi}
Assume that $\frac{1-2k}{2}\notin\mathrm{Spec}(\mu)$ for any $k = 1,2,\cdots$. Then for any $p,n\geq 0$ there exists $\Phi_{\qa,p}^n \in\mathcal{\hat A}(M)$ such that
\begin{equation*}
\diff{h_{\qa,p}}{\qt_n} = (\Phi_{\qa,p}^n)^\prime.
\end{equation*}
\end{Lem}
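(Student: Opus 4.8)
The plan is to compute $\diff{h_{\qa,p}}{\qt_n}$ explicitly in terms of the super variables $\qs_{\qb,k}$ and their $x$-derivatives, and then exhibit a primitive under $\qp=\qp_x$. Recall that $h_{\qa,p}=\Qo_{\qa,p-1;1,0}$ and, more usefully, $\qp_\qg h_{\qa,p}=\Qo_{\qa,p-1;\qg,0}$. The evolution of $v^\qg$ along $\diff{}{\qt_n}$ is $\eta^{\qg\qe}\qs_{\qe,n}^1$ by \eqref{v-tau}, so by the chain rule
\[
\diff{h_{\qa,p}}{\qt_n}=\qp_\qg h_{\qa,p}\,\diff{v^\qg}{\qt_n}=\qp_\qg h_{\qa,p}\,\eta^{\qg\qe}\qs_{\qe,n}^1.
\]
The key is to recognize the right-hand side as a component of the recursion operator acting on the $\qt$-flows: by the defining relation \eqref{biham-rec} for the super variables (equivalently \eqref{gen-eq} for the generating series $c_\qa(\ql)$), the quantity $\qs_{\qe,n}^1$ can be traded, modulo lower flows, for $\qp$ applied to expressions built from $h$'s and $\qs$'s. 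So first I would set up the generating-function bookkeeping, using $c_\qa(\ql)=-\sum_{m\ge 0}\qs_{\qa,m}\ql^{-m-1}$ and the generating series $\sum_p h_{\qa,p}z^p$, and rewrite $\sum_{p,n}\diff{h_{\qa,p}}{\qt_n}z^p\ql^{-n-1}$ in closed form using \eqref{norm} and \eqref{omega}.

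Second, I would extract the obstruction to integrability in $x$. The expression $\qp_\qg h_{\qa,p}\,\eta^{\qg\qe}\qs_{\qe,n}^1$ is a differential polynomial of degree $1$ in the $\qs$'s that is linear in first $x$-derivatives of $\qs$; its homotopy class in $\mathcal{\hat A}/\qp\mathcal{\hat A}$ is the only obstruction. Integrating by parts, $\qp_\qg h_{\qa,p}\,\eta^{\qg\qe}\qs_{\qe,n}^1=\qp\!\left(\qp_\qg h_{\qa,p}\,\eta^{\qg\qe}\qs_{\qe,n}\right)-\qp_x(\qp_\qg h_{\qa,p})\,\eta^{\qg\qe}\qs_{\qe,n}$, and $\qp_x(\qp_\qg h_{\qa,p})=\qp_\qg\qp_\qz h_{\qa,p}\,v^{\qz,1}$, which by \eqref{hamil-rec} equals $c^\ql_{\qg\qz}\qp_\ql h_{\qa,p-1}\,v^{\qz,1}$. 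Iterating this reduction lowers the $p$-index; combined with the recursion \eqref{biham-rec} relating $\qs_{\qe,n}^1$ to $\qs_{\qe,n-1}$ (via $g^{\qa\qb}$ and $\Qg^{\qa\qb}_\qg$), one obtains a recursion in which each step either produces an exact term $\qp(\cdots)$ or lowers $(p,n)$, terminating at $p=0$ or $n=0$, where $h_{\qa,0}=\eta_{\qa\qg}v^\qg$ makes the flow manifestly a total $x$-derivative. The spectral hypothesis $\frac{1-2k}{2}\notin\mathrm{Spec}(\mu)$ enters precisely here: at each inversion of the recursion one must solve a linear equation whose coefficient matrix involves $g^{\qa\qb}-\ql\eta^{\qa\qb}$ and the scaling weights shifted by half-integers; the condition guarantees the relevant operators $(p+\tfrac12+\mu_\qa)\,\mathrm{Id}+\text{(nilpotent }R\text{-part)}$ are invertible so that the required primitives $\Phi_{\qa,p}^n$ exist as genuine differential polynomials rather than formal nonlocal objects.

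The cleanest way to organize this is to use the Euler vector field / quasi-homogeneity as in \eqref{homog}. I would introduce the "primitive candidate"
\[
\Phi_{\qa,p}^n := \qp_\qg h_{\qa,p}\,\eta^{\qg\qe}\qs_{\qe,n} + (\text{correction terms of lower weight}),
\]
and show $\qp\Phi_{\qa,p}^n=\diff{h_{\qa,p}}{\qt_n}$ by applying $\qp$, using the defining relations \eqref{biham-rec}, \eqref{hamil-rec}, \eqref{hamil-ini}, and the identities \eqref{gam-v}, \eqref{g-gam}; the correction terms are then forced recursively, and the spectral condition is exactly what makes each recursive step solvable. An alternative, perhaps slicker, route: show directly that $\diff{h_{\qa,p}}{\qt_n}$ has zero variational derivative with respect to every $\qs_{\qb,k}$ and every $v^\qb$ — because it is a closed $1$-form in the variational bicomplex along these flows — and invoke the fact that on $\mathcal{\hat A}(M)$ the kernel of $\qd/\qd(\cdot)$ modulo constants is exactly $\qp\mathcal{\hat A}(M)$, noting that there are no odd constants except zero (exactly as used at the end of the proof of Lemma \ref{Z-lem}); the spectral hypothesis would reappear when checking that the relevant closed form is actually exact in the polynomial (rather than nonlocal) subcomplex.

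I expect the main obstacle to be the second step: controlling the recursion that trades $\qs_{\qe,n}^1$ against lower-index super variables without generating genuinely nonlocal terms. The recursion \eqref{biham-rec} is an inhomogeneous linear relation of the schematic form $\eta\,\qs_{k+1}^1=g\,\qs_k^1+\Qg v' \qs_k$, so naively inverting it to express $\qs_{k+1}$ in terms of $\qs_k$ requires a $\qp_x^{-1}$; the point is that after pairing with $\qp_\qg h_{\qa,p}$ and integrating by parts the nonlocal pieces must cancel, and verifying this cancellation — which is where the half-integer spectral condition does real work — is the technical heart of the argument. The bookkeeping is made tractable by the generating-function formalism and by the tau-symmetry $\diff{h_{\qa,p}}{t^{\qb,q}}=\diff{h_{\qb,q}}{t^{\qa,p}}$, but it remains a careful computation.
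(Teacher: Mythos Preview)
Your strategy---integration by parts, \eqref{hamil-rec} to lower $p$, the bihamiltonian relation \eqref{biham-rec}, and quasi-homogeneity \eqref{homog} to produce the scalar coefficient that the spectral hypothesis makes invertible---is exactly the paper's. The one point where your description goes astray is the direction of the recursion in the $n$-index: it goes \emph{up}, not down. After integration by parts the obstruction term is $-c^{\ql\qe}_\qz(\qp_\ql h_{\qa,p-1})v^{\qz,1}\qs_{\qe,n}$; writing $c^{\ql\qe}_\qz=2\Qg^{\ql\qe}_\qz+2\mu_\qe c^{\ql\qe}_\qz$ via \eqref{gam-v} and then using \eqref{biham-rec} in the form $\Qg^{\ql\qe}_\qz v^{\qz,1}\qs_{\qe,n}=\eta^{\ql\qe}\qs^1_{\qe,n+1}-g^{\ql\qe}\qs^1_{\qe,n}$ produces a term $-2\,\qp h_{\qa,p-1}/\qp\qt_{n+1}$ together with a $g$-piece which, via $g^{\ql\qe}=E^\mu c^{\ql\qe}_\mu$, \eqref{hamil-rec} and \eqref{homog}, returns $2(p+\mu_\qa)$ times the original expression plus lower-$p$ remainders. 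So the recursion sends $(p,n)\mapsto(p-1,n+1)$ and terminates only at $p=0$, where $\Phi^m_{\qa,0}=\qs_{\qa,m}$ holds for \emph{all} $m$; there is no termination at $n=0$. Correspondingly the divisor that must not vanish is $\tfrac{2p-1}{2}+\mu_\qa$, not $p+\tfrac12+\mu_\qa$.

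With this correction your outline is the paper's proof verbatim, and the nonlocality worry you flag never materializes: no $\qp_x^{-1}$ is ever applied, because \eqref{biham-rec} is used in its direct form (to trade $\Qg v'\qs_n$ for $\qs^1_{n+1}$) rather than inverted. Your alternative variational-derivative route is not needed and would not bypass the spectral hypothesis, which is genuinely required for the recursion \eqref{phi-rec} to be solvable in $\mathcal{\hat A}(M)$.
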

\begin{proof}
We prove the lemma by induction on $p$. For $p = 0$ this is obvious, since by using \eqref{hamil-ini} and \eqref{v-tau} we have
\begin{equation*}
\diff{h_{\qa,0}}{\qt_n} = \eta_{\qa\qg}\diff{v^\qg}{\qt_n} = \qs_{\qa,n}^\prime.
\end{equation*}
For $p\geq 1$, by using the relation \eqref{hamil-rec} we obtain
\begin{equation*}
\diff{h_{\qa,p}}{\qt_n} = -c^{\ql\qe}_\qz(\qp_\ql h_{\qa,p-1})v^{\qz,1}\qs_{\qe,n}+(\eta^{\ql\qe}(\qp_\ql h_{\qa,p})\qs_{\qe,n})^\prime.
\end{equation*}
From this equation and the relations \eqref{hamil-rec}, \eqref{biham-rec}, \eqref{gam-v} and \eqref{v-tau} it follows that
\begin{align*}
\diff{h_{\qa,p}}{\qt_n}= &-2\mu_\qe c^{\ql\qe}_\qz(\qp_\ql h_{\qa,p-1})v^{\qz,1}\qs_{\qe,n}+2\eta^{\qb\qe}E(\qp_\qb h_{\qa,p})\qs_{\qe,n}^1\\&-2\diff{h_{\qa,p-1}}{\qt_{n+1}}+(\eta^{\ql\qe}(\qp_\ql h_{\qa,p})\qs_{\qe,n})^\prime.
\end{align*}
By using the quasi-homogeneous condition \eqref{homog} and the relations 
\eqref{mu-eta}, \eqref{hamil-rec} we arrive at the equations
\begin{align*}
\diff{h_{\qa,p}}{\qt_n}= &-2\mu_\qe c^{\ql\qe}_\qz(\qp_\ql h_{\qa,p-1})v^{\qz,1}\qs_{\qe,n}+2\mu_\qb\eta^{\qb\qe}(\qp_\qb h_{\qa,p})\qs_{\qe,n}^1-2\diff{h_{\qa,p-1}}{\qt_{n+1}}\\&+(\eta^{\ql\qe}(\qp_\ql h_{\qa,p})\qs_{\qe,n})^\prime+2(p+\mu_\qa)\diff{h_{\qa,p}}{\qt_n}+2\sum_{k=1}^p(R_k)^\xi_\qa\diff{h_{\xi,p-k}}{\qt_n}
\\
=&(2\mu_\qb\eta^{\qb\qe}(\qp_\qb h_{\qa,p})\qs_{\qe,n})^\prime
-2\diff{h_{\qa,p-1}}{\qt_{n+1}}+(\eta^{\ql\qe}(\qp_\ql h_{\qa,p})\qs_{\qe,n})^\prime\\&+2(p+\mu_\qa)\diff{h_{\qa,p}}{\qt_n}+2\sum_{k=1}^p(R_k)^\xi_\qa\diff{h_{\xi,p-k}}{\qt_n}.
\end{align*}
Thus, under the assumption $\frac{1-2k}{2}\notin\mathrm{Spec}(\mu)$ for any $k = 1,2,\cdots$, we can obtain $\Phi_{\qa,p}^n$ from the following recursion relation:
\begin{equation}\label{phi-rec}
-\left(\frac{2p-1}{2}+\mu_\qa\right)\Phi_{\qa,p}^n = \left(\frac 12+\mu_\ql\right)\eta^{\ql\qe}(\qp_\ql h_{\qa,p})\qs_{\qe,n}+\sum_{k=1}^p(R_k)^\xi_\qa\Phi_{\xi,p-k}^n-\Phi_{\qa,p-1}^{n+1}
\end{equation}
with the initial condition
\begin{equation*}
\Phi_{\qa,0}^n = \qs_{\qa,n}.
\end{equation*}
The lemma is proved.
\end{proof}

If the condition $\frac{1-2k}{2}\notin\mathrm{Spec}(\mu)$ for any $k = 1,2,\cdots$ is satisfied, we call the Frobenius manifold $M$ non-resonant; otherwise we call $M$ resonant. Note that 
the notion of resonance and non-resonance used here are different from the one used in \cite{dubrovin1996geometry}. 
For the non-resonant case, we see from Lemma \ref{phi} that all the flows $\diff{f_{\qa,p}}{\qt_n}$ are well-defined. However if $M$ is resonant, then the recursion relation \eqref{phi-rec} becomes trivial for $p, \qa$ with $1-2p-2\mu_\qa=0$. In this case, we need to regard $\Phi^n_{\qa,p}$ as new super variables satisfying the following relation:
\begin{equation}
\label{phi-def}
(\Phi_{\qa,p}^n)^\prime = \diff{h_{\qa,p}}{\qt_n} = \qp_\qb h_{\qa,p}\eta^{\qb\qg}\qs_{\qg,n}^1.
\end{equation}
and these new super variables are new unknown functions whose evolution should be included in the super tau cover of the principal hierarchy. 

In order to derive the flows with respect to the variables $\Phi_{\qa,p}^n$, in view of the relation \eqref{phi-def} we are to find differential polynomials whose derivative with respect to the spatial variable $x$ yield $\frac{\qp^2 h_{\qa,p}}{\qp t^{\qb,q}\qp\qt_n}$ and $\frac{\qp^2 h_{\qa,p}}{\qp\qt_k\qp\qt_n}$. Thus let us introduce
\begin{equation}
\label{del-def}
\Qd_{\qa,p}^{k,n}=-\Qd_{\qa,p}^{n,k}=\eta^{\qg\ql}\qp_\ql h_{\qa,p}\Qg_\qg^{\qd\mu}\left(\sum_{i=0}^{k-n-1}\qs_{\mu,n+i}\qs_{\qd,k-i-1}^1\right),\quad k\ge n,
\end{equation}
By using these notations, we have the following lemma which holds true for any Frobenius manifold (whether resonant or not):
\begin{Lem}
\label{exp-phi}
The differential polynomials $\frac{\qp^2 h_{\qa,p}}{\qp t^{\qb,q}\qp\qt_n}$ and $\frac{\qp^2 h_{\qa,p}}{\qp\qt_k\qp\qt_n}$ belong to
$\qp_x\hat{\mathcal{A}}(M)$. More explicitly, we have:
\begin{equation*}
\frac{\qp^2 h_{\qa,p}}{\qp t^{\qb,q}\qp\qt_n} = \qp_x\diff{\Qo_{\qa,p;\qb,q}}{\qt_n},\quad \frac{\qp^2 h_{\qa,p}}{\qp\qt_k\qp\qt_n} = \qp_x \Qd_{\qa,p}^{k,n}.
\end{equation*}
\end{Lem}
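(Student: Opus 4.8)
## Proof Plan for Lemma \ref{exp-phi}

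The two identities are established separately, and only the second requires real work.

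For $\frac{\qp^2 h_{\qa,p}}{\qp t^{\qb,q}\qp\qt_n}$, recall the tau-symmetry of the principal hierarchy: from \eqref{omega} and \eqref{omg-ini} one has, as an identity in $\hat{\mathcal A}(M)$,
\begin{equation*}
\frac{\qp h_{\qa,p}}{\qp t^{\qb,q}}=\qp_x\Qo_{\qa,p;\qb,q}
\end{equation*}
(equivalently, $h_{\qa,p}=\qp_x f_{\qa,p}$ together with \eqref{zh-11}). Since $h_{\qa,p}$ depends on $v$ only, the flows $\diff{}{t^{\qb,q}}$ and $\diff{}{\qt_n}$ commute on $h_{\qa,p}$ by Lemma \ref{zh-9} and Theorem \ref{super-1}, while $\diff{}{\qt_n}$ commutes with $\qp_x$ by construction; therefore
\begin{equation*}
\frac{\qp^2 h_{\qa,p}}{\qp t^{\qb,q}\qp\qt_n}=\diff{}{\qt_n}\left(\qp_x\Qo_{\qa,p;\qb,q}\right)=\qp_x\diff{\Qo_{\qa,p;\qb,q}}{\qt_n}.
\end{equation*}

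For $\frac{\qp^2 h_{\qa,p}}{\qp\qt_k\qp\qt_n}$ it suffices, by the anticommutativity of the odd flows $\diff{}{\qt_k}$, $\diff{}{\qt_n}$ (Theorem \ref{super-1}), to treat the case $k\ge n$; the case $k=n$ reads $\frac{\qp^2 h_{\qa,p}}{\qp\qt_n^2}=0=\qp_x\Qd_{\qa,p}^{n,n}$. Starting from \eqref{phi-def}, namely $\diff{h_{\qa,p}}{\qt_n}=\eta^{\qg\qe}(\qp_\qg h_{\qa,p})\qs_{\qe,n}^1$, I would apply $\diff{}{\qt_k}$, using \eqref{v-tau} for the $v$-dependence of $\qp_\qg h_{\qa,p}$ and \eqref{seg-tau-p} (after one $x$-derivative) for $\qs_{\qe,n}^1$, and compare the outcome with the Leibniz expansion of $\qp_x\Qd_{\qa,p}^{k,n}$ from \eqref{del-def}. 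The terms in which $\qp_x$ falls on the quadratic $\qs$-factor $\sum_i\qs_{\mu,n+i}\qs_{\qd,k-i-1}^1$, and those in which it falls on the Christoffel-type coefficients $\Qg$, cancel pairwise between the two sides, so that the claim reduces to
\begin{equation*}
\eta^{\qg\qb}\eta^{\rho\ql}(\qp_\rho\qp_\qg h_{\qa,p})\,\qs_{\ql,k}^1\qs_{\qb,n}^1=\eta^{\qg\ql}(\qp_\rho\qp_\ql h_{\qa,p})\,v^{\rho,1}\,\Qg_\qg^{\qd\mu}\sum_{i=0}^{k-n-1}\qs_{\mu,n+i}\qs_{\qd,k-i-1}^1.
\end{equation*}
I would prove this last identity by iterating the recursion \eqref{biham-rec} to bring the second index of $\qs_{\ql,k}^1$ down to level $n$, invoking at each step the relations \eqref{gam-v} and \eqref{g-gam} between $g^{\qa\qb}$, $\Qg_\qg^{\qa\qb}$ and $c^\qg_{\qa\qb}$, the symmetry of $\qp_\rho\qp_\qg h_{\qa,p}$, and the Frobenius associativity encoded in \eqref{hamil-rec}; the boundary terms of this telescoping assemble into exactly the finite sum $\sum_{i=0}^{k-n-1}$.

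The first identity is essentially free. The main obstacle is the last step for the second identity: one has to keep careful track of the signs produced by reordering the odd variables $\qs$, and verify that the repeated use of \eqref{biham-rec} terminates with precisely the asserted boundary sum and leaves no residual terms. An equivalent and possibly more transparent route, which repackages this telescoping (though not the sign bookkeeping), is to pass to the generating series $c_\qa(\ql)$ used in the proof of Lemma \ref{Z-lem} and satisfying \eqref{gen-eq}, and read off both identities as equalities of Laurent coefficients in $\ql$.
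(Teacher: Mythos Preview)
Your proposal is correct and follows essentially the same route as the paper. The paper organizes the second identity as an induction on $k-n$, with the inductive step $(k,n)\to(k-1,n+1)$ obtained by splitting off the two boundary summands $i=0$ and $i=k-n-1$ from the formula \eqref{seg-tau-p} and recognizing the remaining middle piece as $\partial_{\tau_{k-1}}\partial_{\tau_{n+1}}h_{\qa,p}$; this is precisely the telescoping you describe after your (correct) reduction to the residual identity, and it uses the same algebraic ingredients (\eqref{hamil-rec}, \eqref{biham-rec}, \eqref{gam-v}, and associativity to kill the symmetric $g$-term).
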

\begin{proof}
By using the definitions \eqref{ph}, \eqref{omega} of the principal hierarchy and the functions $\Omega_{\qa,p;\beta,q}$, it is straightforward to check the relation
\begin{equation}\label{zh-12-3-1}
\frac{\qp^2 h_{\qa,p}}{\qp t^{\qb,q}\qp\qt_n} = \qp_x\diff{\Qo_{\qa,p;\qb,q}}{\qt_n}\in \qp_x\hat{\mathcal{A}}(M).
\end{equation} 

Let us proceed to prove the other relations. Without loss of generality, we assume that $k\geq n$ and we shall prove the identity\begin{equation*}
\frac{\qp^2 h_{\qa,p}}{\qp\qt_k\qp\qt_n} = \qp_x \Qd_{\qa,p}^{k,n},\quad k\geq n,
\end{equation*}
by induction on $k-n$. The case $k-n=0$ is trivial, so we first consider $k=n+1$. In this case we have
\begin{align*}
&\diff{}{\qt_{n+1}}(\eta^{\ql\qg}\qp_\ql h_{\qa,p}\qs_{\qg,n}^1)\\
=&\eta^{\ql\qg}\qp_\qb\qp_\ql h_{\qa,p}\eta^{\qb\qe}\qs_{\qe,n+1}^1\qs_{\qg,n}^1+\eta^{\ql\qg}\qp_\ql h_{\qa,p}\left(\Qg_\qg^{\qd\mu}\qs_{\mu,n}\qs_{\qd,n}^1\right)^\prime\\
=&\eta^{\ql\qg}c_{\qb\ql}^\qd\qp_\qd h_{\qa,p-1}(g^{\qb\qe}\qs_{\qe,n}^1+\Qg_\qz^{\qb\qe}v^{\qz,1}\qs_{\qe,n})\qs_{\qg,n}^1-\eta^{\ql\qd}c_{\mu\ql}^\qz\qp_\qz h_{\qa,p-1}v^{\mu,1}\Qg_\qd^{\qg\qe}\qs_{\qe,n}\qs_{\qg,n}^1\\
&+\left(\eta^{\ql\qg}\qp_\ql h_{\qa,p}\Qg_\qg^{\qd\mu}\qs_{\mu,n}\qs_{\qd,n}^1\right)^\prime,
\end{align*}
here we use the recursion relation \eqref{hamil-rec} and the bihamiltonian recursion relation\eqref{biham-rec}.
From the associativity equation of the structure constants $c_{\qa}^{\qb\qg}$ and the identity \eqref{gam-v}, it is easy to see that 
\begin{equation*}
\eta^{\ql\qg}c_{\qb\ql}^\qd\qp_\qd h_{\qa,p-1}\Qg_\qz^{\qb\qe}v^{\qz,1}-\eta^{\ql\qd}c_{\mu\ql}^\qz\qp_\qz h_{\qa,p-1}v^{\mu,1}\Qg_\qd^{\qg\qe}=0.
\end{equation*}
Thus we arrive at the following equation:
\begin{equation*}
\diff{}{\qt_{n+1}}\diff{h_{\qa,p}}{\qt_n} = \eta^{\ql\qg}c_{\qb\ql}^\qd\qp_\qd h_{\qa,p-1}g^{\qb\qe}\qs_{\qe,n}^1\qs_{\qg,n}^1+\left(\eta^{\ql\qg}\qp_\ql h_{\qa,p}\Qg_\qg^{\qd\mu}\qs_{\mu,n}\qs_{\qd,n}^1\right)^\prime.
\end{equation*}
By using the identity \eqref{gam-v} and the associativity equation again, we know that the expressions
$\eta^{\ql\qg}c_{\qb\ql}^\qd g^{\qb\qe}$
are symmetric with respect to the indices $\qg$ and $\qe$, hence the term $\eta^{\ql\qg}c_{\qb\ql}^\qd\qp_\qd h_{\qa,p-1}g^{\qb\qe}\qs_{\qe,n}^1\qs_{\qg,n}^1$ vanishes in the r.h.s. of the above equation. This completes the proof of the case $k-n=1$.

Assume that the relation \eqref{zh-12-3-1} holds true for $k-n\le l$ with $l\ge 1$. When $k=n+l+1$ we have
\begin{align*}
&\diff{}{\qt_{n+l+1}}(\eta^{\ql\qg}\qp_\ql h_{\qa,p}\qs_{\qg,n}^1)\\
=&\diff{}{\qt_{n+l+1}}(\eta^{\ql\qg}\qp_\ql h_{\qa,p})\qs_{\qg,n}^1+\eta^{\ql\qg}\qp_\ql h_{\qa,p}\left(\Qg_\qg^{\qd\mu}\sum_{i=0}^l\qs_{\mu,n+i}\qs_{\qd,n+l-i}^1\right)^\prime\\
=&\diff{}{\qt_{n+l+1}}(\eta^{\ql\qg}\qp_\ql h_{\qa,p})\qs_{\qg,n}^1+\eta^{\ql\qg}\qp_\ql h_{\qa,p}\left(\Qg_\qg^{\qd\mu}\sum_{i=0}^{l-2}\qs_{\mu,n+1+i}\qs_{\qd,n+l-1-i}^1\right)^\prime\\
&+\eta^{\ql\qg}\qp_\ql h_{\qa,p}\left(\Qg_\qg^{\qd\mu}(\qs_{\mu,n}\qs_{\qd,n+l}^1+\qs_{\mu,n+l}\qs_{\qd,n}^1)\right)^\prime\\
=&\diff{}{\qt_{n+l+1}}(\eta^{\ql\qg}\qp_\ql h_{\qa,p})\qs_{\qg,n}^1+\diff{}{\qt_{n+l}}\diff{h_{\qa,p}}{\qt_{n+1}}-\diff{}{\qt_{n+l}}(\eta^{\ql\qg}\qp_\ql h_{\qa,p})\qs_{\qg,n+1}^1\\
&+\eta^{\ql\qg}\qp_\ql h_{\qa,p}\left(\Qg_\qg^{\qd\mu}(\qs_{\mu,n}\qs_{\qd,n+l}^1+\qs_{\mu,n+l}\qs_{\qd,n}^1)\right)^\prime.
\end{align*}
Then we can repeat exactly the same argument as we do in the case $k=n+1$ to obtain
\begin{equation*}
\diff{}{\qt_{n+l+1}}\diff{h_{\qa,p}}{\qt_n} = \diff{}{\qt_{n+l}}\diff{h_{\qa,p}}{\qt_{n+1}}+\left(\eta^{\ql\qg}\qp_\ql h_{\qa,p}\Qg_\qg^{\qd\mu}(\qs_{\mu,n}\qs_{\qd,n+l}^1+\qs_{\mu,n+l}\qs_{\qd,n}^1)\right)^\prime.
\end{equation*}
So by applying the induction hypothesis we arrive at
\[
\diff{}{\qt_{n+l+1}}\diff{h_{\qa,p}}{\qt_n} =\qp_x\Qd_{\qa,p}^{n+l+1,n} \in \qp_x\hat{\mathcal{A}}(M).
\]
The Lemma is proved.
\end{proof}

\begin{Ex}
Let $M$ be the Frobenius manifold given by the quantum cohomology of $\mathbb{CP}^1$. Its Frobenius structure is characterized by the potential
\begin{equation*}
F = \frac{1}{2}v^2u+e^u.
\end{equation*}
Note that in this example $v, u$ are used to denote the flat coordinates and $\qp_v$ is the unit vector field. The potential $F$ is quasi-homogeneous with respect to the following Euler vector field:
\begin{equation*}
E = v\diff{}{v}+2\diff{}{u}.
\end{equation*}
We have the following monodromy data of the Frobenius manifold: 
\begin{equation*}
\mu=\begin{pmatrix}-\frac{1}{2}& 0\\ 0&\frac{1}{2}\end{pmatrix},\quad R=R_1=\begin{pmatrix}0&0\\2&0\end{pmatrix}.
\end{equation*}
So this Frobenius manifold is resonant in the above sense. 
The equations \eqref{hamil-rec}--\eqref{homog} for the functions
$h_{\qa,p}$ can be represented in the form
\begin{align*}
&\qp_v h_{\qa,p+1}=h_{\qa,p},\quad\qp_u\qp_u h_{\qa,p+1}=e^u\qp_v h_{\qa,p},\\
&\qp_Eh_{\qa,p}=\left(p+\frac{1}{2}+\mu_\qa\right)h_{\qa,p}+2\qd_{\qa,1}h_{2,p-1}.
\end{align*}
The first few $h_{\qa,p}$ have the expressions
\begin{align*}
&h_{1,0}=u,\quad h_{2,0}=v,\\
&h_{1,1}=uv,\quad h_{2,1}=\frac{1}{2}v^2+e^u,\\
&h_{1,2}=\frac{1}{2}v^2u+ue^u-2e^u,\quad h_{2,2}=\frac{1}{6}v^3+ve^u.
\end{align*}
They yield the following flows of the principal hierarchy:
\begin{align*}
&\diff{v}{t^{1,0}}=v^\prime,\ \diff{u}{t^{1,0}}=u^\prime;\quad \diff{v}{t^{2,0}}=e^uu^\prime,\diff{u}{t^{2,0}}=v^\prime,\\
&\diff{v}{t^{1,1}} = vv^\prime+e^uu^\prime,\ \diff{u}{t^{1,1}} = u^\prime v+uv^\prime,\\
&\diff{v}{t^{2,1}}=e^uv^\prime+ve^uu^\prime,\ \diff{u}{t^{2,1}}=vv^\prime+e^uu^\prime.
\end{align*}
Let us extend the principal hierarchy to include the odd flows. Since the principal hierarchy is bihamiltoanin with the Hamiltonian operators
\begin{equation*}
\mathcal{P}_0 = \begin{pmatrix}0 &\qp_x\\ \qp_x &0 \end{pmatrix},\quad \mathcal{P}_1=\begin{pmatrix}e^uu^\prime+2e^u\qp_x&v\qp_x\\v^\prime+v\qp_x&2\qp_x\end{pmatrix},
\end{equation*}
the super variables $\qs_{\qa,n}$ satisfy the bihamiltonian recursion
relation
\begin{equation*}
\qs_{1,n+1}=v\qs_{1,n}+2\qs_{2,n},\quad \qs_{2,n+1}^1=2e^u\qs_{1,n}^1+v\qs_{2,n}^1+e^uu^\prime\qs_{1,n}.
\end{equation*} 
Then the odd flows of the principal hierarchy are given by
\begin{align*}
&\diff{v}{\qt_n}=\qs_{2,n}^1,\quad \diff{u}{\qt_n}=\qs_{1,n}^1,\\
&\diff{\qs_{1,n}}{t^{\qa,p}}=(\qp_v\qp_uh_{\qa,p+1})\qs_{1,n}^1+(\qp_v\qp_vh_{\qa,p+1})\qs_{2,n}^1,\\
&\diff{\qs_{2,n}}{t^{\qa,p}}=(\qp_u\qp_uh_{\qa,p+1})\qs_{1,n}^1+(\qp_v\qp_uh_{\qa,p+1})\qs_{2,n}^1,\\
&\diff{\qs_{1,n}}{\qt_k}=-\diff{\qs_{1,k}}{\qt_n}=\sum_{i=0}^{k-n-1}\qs_{1,n+i}\qs_{2,k-i-1}^1,\quad n\leq k,\\&\diff{\qs_{2,n}}{\qt_k}=-\diff{\qs_{2,k}}{\qt_n}=e^u\sum_{i=0}^{k-n-1}\qs_{1,n+i}\qs_{1,k-i-1}^1,\quad n\leq k.
\end{align*}
Now let us proceed to write down its super tau-cover. Introduce the one-point functions $f_{\qa,p}$, then we need to determine the odd flows $\diff{f_{\qa,p}}{\qt_n}$. Due to Lemma \ref{phi}, we know that the super variables $\Phi_{\qa,p}^n$ satisfy the recursion relations
\begin{align*}
(p-1)\Phi_{1,p}^n&=\Phi_{1,p-1}^{n+1}-\qp_uh_{1,p}\qs_{1,n}-2\Phi_{2,p-1}^n,\\
p\Phi_{2,p}^n&=\Phi_{2,p-1}^{n+1}-\qp_uh_{2,p}\qs_{1,n}
\end{align*}
with initial condition $\Phi_{\qa,0}^n=\qs_{\qa,n}$, from which it follows that all $\Phi_{2,p}^n$ are differential polynomials of $u,v,\qs_{\qa,k}$, and $\Phi_{1,p}^n$ are differential polynomials of $u,v,\qs_{\qa,k},\Phi_{1,1}^k$. So in this example we only need to introduce the super variables $\Phi_{1,1}^n$ and they satisfy the relation:
\begin{equation*}
(\Phi_{1,1}^n)^\prime = v\qs_{1,n}^1+u\qs_{2,n}^1.
\end{equation*}
By using Lemma \ref{exp-phi}, we can write down the evolution of $\Phi_{1,1}^n$ as follows:
\begin{align*}
\diff{\Phi_{1,1}^n}{t^{\qa,p}}=&(\qp_v\qp_uh_{\qa,p+1}\qs_{1,n}^1+\qp_v\qp_vh_{\qa,p+1}\qs_{2,n}^1)v\\
&+(\qp_u\qp_uh_{\qa,p+1}\qs_{1,n}^1+\qp_v\qp_uh_{\qa,p+1}\qs_{2,n}^1)u,\\
\diff{\Phi_{1,1}^n}{\qt_k}=&-\diff{\Phi_{1,1}^k}{\qt_n}=\sum_{i=0}^{k-n-1}v\qs_{1,n+i}\qs_{2,k-i-1}^1+ue^u\qs_{1,n+i}\qs_{1,k-i-1}^1,\ n\leq k.
\end{align*}
\end{Ex}

To simplify the notation, in what follows we do not distinguish the resonant case and the non-resonant case. We will always include the variables $\Phi_{\qa,p}^n$ and their evolution as part of the super tau cover of the principal hierarchy. One note that for non-resonant case these variables are redundant due to Lemma \ref{phi}.
We summarize our result on the super tau-cover of the principal hierarchy in the following theorem.
\begin{Th}[Super tau-cover of the principal hierarchy]
\label{super-2}
Let $M$ be an $n$-dimensional Frobenius manifold. Then the following equations, together with the ones described in Theorem \ref{super-1}, give the super tau-cover of the principal hierarchy associated with $M$:
\begin{align}
\label{f-t}
\diff{f_{\qa,p}}{t^{\qb,q}} &= \Qo_{\qa,p;\qb,q},\\
\label{f-tau}
\diff{f_{\qa,p}}{\qt_n} &= \Phi_{\qa,p}^n,\\
\label{phi-t}
\diff{\Phi_{\qa,p}^n}{t^{\qb,q}} &= \diff{\Qo_{\qa,p;\qb,q}}{\qt_n},\\
\label{phi-tau}
\diff{\Phi_{\qa,p}^n}{\qt_k} &= \Qd_{\qa,p}^{k,n},
\end{align}
where $\Qo_{\qa,p;\qb,q}$ are defined in \eqref{omega} and $\Qd_{\qa,p}^{k,n}$ are defined via \eqref{del-def}.
\end{Th}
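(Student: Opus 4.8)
The plan is to verify that the system consisting of the flows in Theorem \ref{super-1} together with \eqref{f-t}--\eqref{phi-tau} is consistent, i.e. that the mixed partial derivatives agree; once consistency is established the ``tau-cover'' terminology is justified because a common solution then produces, via $\mathcal F_0$ and the $\Phi^n_{\qa,p}$, a genuine extension of the tau-cover \eqref{zh-11}--\eqref{zh-11b}. Since Theorem \ref{super-1} already gives the commutativity of the flows acting on $v^\qa$ and $\qs_{\qa,k}$, what remains is to check the compatibility conditions involving the new unknowns $f_{\qa,p}$ and $\Phi^n_{\qa,p}$. Concretely, I would check the following equalities of second derivatives: $\qp_{t^{\qb,q}}\qp_{t^{\qg,r}}f_{\qa,p}$ symmetric in $(\qb,q)\leftrightarrow(\qg,r)$; $\qp_{\qt_n}\qp_{t^{\qb,q}}f_{\qa,p}=\qp_{t^{\qb,q}}\qp_{\qt_n}f_{\qa,p}$; $\qp_{\qt_n}\qp_{\qt_k}f_{\qa,p}$ antisymmetric in $n\leftrightarrow k$; and the analogous three identities with $f_{\qa,p}$ replaced by $\Phi^n_{\qa,p}$.

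For the first family, $\qp_{t^{\qg,r}}\Qo_{\qa,p;\qb,q}=\qp_{t^{\qb,q}}\Qo_{\qa,p;\qg,r}$ is exactly the classical compatibility of the tau-cover of the principal hierarchy and follows from \eqref{omega} together with \eqref{hamil-rec}; this is standard (see \cite{dubrovin2001normal}) and I would simply cite it. The mixed relation $\qp_{\qt_n}\Qo_{\qa,p;\qb,q}=\qp_{t^{\qb,q}}\Phi^n_{\qa,p}$ is precisely \eqref{phi-t}, but one must check it is consistent with the definition \eqref{f-tau}: differentiating $(\Phi^n_{\qa,p})'=\qp_\qb h_{\qa,p}\eta^{\qb\qg}\qs^1_{\qg,n}$ along $t^{\qb,q}$ and comparing with $\qp_x\,\qp_{\qt_n}\Qo_{\qa,p;\qb,q}$ reduces, after using \eqref{zh-11} and Lemma \ref{exp-phi}, to an identity in $\qp_x\hat{\mathcal A}(M)$; the key input is the first formula of Lemma \ref{exp-phi}, namely $\qp^2_{t^{\qb,q}\qt_n}h_{\qa,p}=\qp_x\,\qp_{\qt_n}\Qo_{\qa,p;\qb,q}$. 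Similarly, \eqref{f-tau} and \eqref{phi-tau} are consistent with \eqref{phi-def} precisely because of the second formula $\qp^2_{\qt_k\qt_n}h_{\qa,p}=\qp_x\Qd^{k,n}_{\qa,p}$, and the required antisymmetry $\Qd^{k,n}_{\qa,p}=-\Qd^{n,k}_{\qa,p}$ is built into \eqref{del-def}. Thus Lemma \ref{exp-phi} does most of the heavy lifting for the $f$-equations.

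For the $\Phi$-equations the compatibility conditions $\qp_{t^{\qg,r}}\qp_{\qt_n}\Qo_{\qa,p;\qb,q}=\qp_{t^{\qb,q}}\qp_{\qt_n}\Qo_{\qa,p;\qg,r}$, $\qp_{\qt_k}\,\qp_{\qt_n}\Qo_{\qa,p;\qb,q}=\qp_{t^{\qb,q}}\Qd^{k,n}_{\qa,p}$ and the antisymmetrized version of $\qp_{\qt_l}\Qd^{k,n}_{\qa,p}$ must be verified. Each of these can be handled by observing that all objects live in $\hat{\mathcal A}(M)$, so it suffices to check the identities after applying $\qp_x$; then one replaces $\qp_x\Phi^n_{\qa,p}$, $\qp_x\Qd^{k,n}_{\qa,p}$ and $\qp_x\Qo_{\qa,p;\qb,q}$ by their expressions via \eqref{phi-def}, \eqref{phi-tau}, Lemma \ref{exp-phi}, and reduces everything to already-established commutation relations of the flows on $v^\qa,\qs_{\qa,k}$ from Theorem \ref{super-1}, together with the WDVV/associativity relations \eqref{gam-v}--\eqref{g-gam} and the recursions \eqref{hamil-rec}, \eqref{biham-rec}. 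As in the proof of Lemma \ref{Z-lem}, after stripping one $\qp_x$ the resulting ``constant'' must vanish because $\hat{\mathcal A}(M)$ contains no nonzero odd constants, so the potential integration ambiguities are harmless.

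The main obstacle I anticipate is organizing the $\qt_k$-$\qt_n$ compatibility for the $\Phi$-variables: the sums in \eqref{del-def} and \eqref{seg-tau-p} interact, and one has to show that $\qp_{\qt_l}\Qd^{k,n}_{\qa,p}$, once symmetrized appropriately over $\{l,k,n\}$, collapses — this is the analogue of the Jacobi-type identity underlying $[\qp_{\qt_m},\qp_{\qt_k}]\qs_{\qa,j}=0$ in Theorem \ref{super-1}, and the generating-function technique of Lemma \ref{Z-lem} (encoding $\qs_{\qa,m}$ into $c_\qa(\ql)$) is the natural device to keep the bookkeeping under control. Everything else is a routine, if lengthy, differential-polynomial computation that I would not write out in full.
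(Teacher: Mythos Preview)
Your proposal is correct and follows essentially the same approach as the paper: the paper's proof is a one-line remark that commutativity follows from the relations $f_{\qa,p}^\prime=h_{\qa,p}$ and \eqref{phi-def} together with Theorem \ref{super-1}, which is exactly your strategy of applying $\qp_x$ to reduce every compatibility check for $f_{\qa,p}$ and $\Phi^n_{\qa,p}$ to the already-established commutation relations on $v^\qa$ and $\qs_{\qa,k}$. You have simply spelled out in detail (invoking Lemma \ref{exp-phi} and the absence of nonzero odd constants) what the paper leaves implicit.
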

\begin{proof}
We only need to prove that these flows commute. This can easily be checked by using the relation $f_{\qa,p}^\prime = h_{\qa,p}$ and \eqref{phi-def}, and by applying Theorem \ref{super-1}.
\end{proof}

The tau-cover is introduced to study the Virasoro symmetry of the principal hierarchy. For any given Frobenius manifold $M$, a representation of (half of) the Virasoro algebra is constructed in \cite{dubrovin1999frobenius} via a set of linear differential operators, which also gives a representation of the additional symmetries of the principal hierarchy. Let us recall the basic construction very briefly. We first construct a Heisenberg algebra by introducing the following operators:
\begin{equation*}
a_k^\qa = \begin{cases}\eta^{\qa\qb}\diff{}{t^{\qb,k}},\quad &k\geq 0,\\(-1)^{k+1}t^{\qa,-k-1},\quad &k<0,\end{cases}
\end{equation*}
they satisfy the commutation relations
\begin{equation*}
[a_k^\qa,a_l^\qb] = (-1)^k\eta^{\qa\qb}\qd_{k+l+1,0}.
\end{equation*}

Define the following matrices for $m\geq -1$:
\begin{equation*}
P_m(\mu,R) = \begin{cases}[\exp(R\partial_x)\Pi_{j=0}^m(x+\mu+j-\frac{1}{2})]_{x=0},\quad &m \geq 0\\ 1, \quad & m = -1.\end{cases}
\end{equation*}
Let $V$ be an $n$-dimensional complex vector space with a fixed basis $\{e_\qa\}$, and endowed with a symmetric bilinear form $\langle-,-\rangle$ defined by $\eta_{\qa\qb} = \langle e_\qa,e_\qb\rangle$. The matrices $P_m$ induces endomorphisms of $V$ via the given basis. Introduce the vector-valued operators $a_k = a_k^\qa e_\qa$, then we can define the following operators of Sugawara-type for $m\geq -1$:
\begin{equation}
\label{L0}
L_m^{even}  =\frac{1}{2}\sum_{k,l\in\mathbb Z}(-1)^{k+1}:\langle a_l,[P_m(\mu-k, R)]_{m-1-l-k}a_k\rangle:+\frac{1}{4}\qd_{m,0}\mathrm{tr}\left(\frac{1}{4}-\mu^2\right),
\end{equation}
here the normal ordering is given by putting the annihilators (i.e., $a^\qa_k$ for $k\geq 0$) on the right, and the matrix $[P_m(\mu-k, R)]_{m-1-l-k}$ is uniquely determined from $P_m(\mu-k,R)$ in a certain way, see \cite{dubrovin1999frobenius} for its precise definition. These linear differential operators can be rewritten in the forms
\[L^{even}_m=a_m^{\qa,p;\qb,q}\frac{\qp^2}{\qp t^{\qa,p}\qp t^{\qb,q}}+{b_m}_{\qa,p}^{\qb,q} t^{\qa,p}\frac{\qp}{\qp t^{\qb,q}}+
c^m_{\qa,p;\qb,q} t^{\qa,p} t^{\qb,q}+\frac{1}{4}\qd_{m,0}\mathrm{tr}\left(\frac{1}{4}-\mu^2\right).
\]
We introduce the following infinitesimal transformation of the genus zero free enery $\mathcal{F}_0$:
\begin{align}
\diff{\mathcal{F}_0}{s_m}&=a_m^{\qa,p;\qb,q}f_{\qa,p}f_{\qb,q}+
b_{m;\qa,p}^{\qb,q} t^{\qa,p}f_{\qb,q}+
c_{m;\qa,p;\qb,q} t^{\qa,p} t^{\qb,q}\\
&=\textrm{Coeff}\left(e^{-\qe^{-2}\mathcal{F}_0}\tilde{L}_m^{even}e^{\qe^{-2}\mathcal{F}_0}, \qe^{-2}\right),
\end{align}
where 
\[\tilde{L}_m^{even}=\left.L_m^{even}\right|_{t^{\qa,p}\mapsto\qe^{-1} t^{\qa,p}, \frac{\qp}{\qp t^{\qa,p}}\mapsto \qe \frac{\qp}{\qp t^{\qa,p}}}.\]
Then the Virasoro symmetries of the principal hierarchy is given by
\begin{equation}
\label{temp3-1}
\diff{f_{\qa,p}}{s_m} := \diff{}{t^{\qa,p}}\diff{\mathcal F_0}{s_m};\quad \diff{v^\qa}{s_m} := \eta^{\qa\qb}\frac{\qp^2}{\qp t^{\qb,0}\qp t^{1,0}}\diff{\mathcal F_0}{s_m},\quad m\ge -1.
\end{equation}
It was proved that
\begin{Th}[\cite{dubrovin1999frobenius}]\mbox{}
\begin{enumerate}
\item The operators $L_m^{even}$ satisfy the Virasoro commutation relations 
\begin{equation}\label{zh-12}
[L_k^{even},L_m^{even}] = (k-m)L_{k+m}^{even},\quad k, m\ge -1.
\end{equation}
	\item The flows $\diff{}{s_m}$ defined in \eqref{temp3-1} are symmetries of the tau-cover of the principal hierarchy \eqref{zh-11}, \eqref{zh-11b}, meaning that
	\[
	\left[\diff{}{s_m},\diff{}{t^{\qa,p}}\right] = 0.
	\]
\end{enumerate}
\end{Th}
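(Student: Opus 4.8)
The plan is to handle the two assertions separately, since (1) is a purely algebraic fact about an oscillator representation while (2) is a compatibility statement with the hierarchy flows. For (1) the strategy is a Sugawara-type computation: write each $L_m^{even}$ in normal-ordered bilinear form, with coefficients read off from the blocks $[P_m(\mu-k,R)]_{m-1-l-k}$ and $\eta$, and compute $[L_k^{even},L_m^{even}]$ directly from the Heisenberg relation $[a_k^\qa,a_l^\qb]=(-1)^k\eta^{\qa\qb}\qd_{k+l+1,0}$. The commutator of two normal-ordered quadratics is again a normal-ordered quadratic plus a $c$-number coming from the double contraction. Matching the quadratic part with that of $(k-m)L_{k+m}^{even}$ reduces to a combinatorial identity among the matrices $P_m(\mu,R)$; the essential input is the factorization/shift property built into $P_m(\mu,R)=[\exp(R\qp_x)\prod_{j=0}^{m}(x+\mu+j-\tfrac12)]_{x=0}$, namely that $\prod_{j=0}^{k+m+1}(x+\mu+j-\tfrac12)$ splits as $\prod_{j=0}^{k}(x+\mu+j-\tfrac12)\cdot\prod_{j=0}^{m}(x+\mu+(k+1)+j-\tfrac12)$ and that $\exp(R\qp_x)$ intertwines the shift $\mu\mapsto\mu+k+1$ with multiplication by such polynomials. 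This is exactly the mechanism that turns the classical bracket $[z^{k+1}\qp_z,z^{m+1}\qp_z]=(k-m)z^{k+m+1}\qp_z$, transported through conjugation by $z^{\mu+1/2}z^R$, into the Virasoro relation. Finally one checks the $c$-number: it can be nonzero only when $k+m=0$, and then the constraint $k,m\ge-1$ forces $k\in\{-1,0,1\}$, so the potential cubic anomaly $\propto k^3-k$ vanishes identically and the remaining scalar is precisely $(k-m)$ times the constant $\tfrac14\mathrm{tr}(\tfrac14-\mu^2)$ sitting inside $L_0^{even}$. The main obstacle here is the bookkeeping of the nilpotent data $R$ (the resonances): one must track the lower-triangular blocks $[\,\cdot\,]_\ell$ and use that $R$ strictly lowers the $z$-grading, so that the $P_m$-identity holds block by block.

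For statement (2) the plan is to verify $[\qp_{s_m},\qp_{t^{\qa,p}}]=0$ on both the $v$- and $f$-components. A first useful observation is that, since $\qp_{t^{1,0}}f_{\qa,p}=\Qo_{\qa,p;1,0}=h_{\qa,p}(v)$ is a function of $v$ only, the quantity $\qp_{t^{1,0}}\big(\qp\mathcal F_0/\qp s_m\big)$ is at most linear in the $f_{\qa,p}$, and $\qp_{t^{\qb,0}}\qp_{t^{1,0}}\big(\qp\mathcal F_0/\qp s_m\big)$ is a differential polynomial in $v$ alone; hence $\qp v^\qa/\qp s_m$ is a genuine evolutionary flow on the jet space of $v$, namely the ``additional symmetry'' associated with the $z^{m+1}\qp_z$-type flow on the space of flat coordinates of the deformed connection. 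One then checks $[\qp_{s_m},\qp_{t^{\qa,p}}]v^\qg=0$ directly, using the recursion relations \eqref{hamil-rec}, the quasi-homogeneity \eqref{homog}, and the definition \eqref{omega} of $\Qo_{\qa,p;\qb,q}$.

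For the $f$-components one uses $\qp_{s_m}f_{\qa,p}=\qp_{t^{\qa,p}}\big(\qp\mathcal F_0/\qp s_m\big)$ together with $\qp_{t^{\qa,p}}f_{\qb,q}=\Qo_{\qa,p;\qb,q}$ and $\Qo_{\qa,p;\qb,q}=\Qo_{\qb,q;\qa,p}$, so that $[\qp_{s_m},\qp_{t^{\qa,p}}]f_{\qb,q}=\qp_{s_m}\Qo_{\qa,p;\qb,q}-\qp_{t^{\qa,p}}\qp_{t^{\qb,q}}\big(\qp\mathcal F_0/\qp s_m\big)$; the vanishing of the right-hand side is a linear identity on $\Qo$ — equivalently, the compatibility of the quadratic operator $L_m^{even}$ with the ``free'' Virasoro action on $\mathcal F_0$ — and is again established from \eqref{omega} and the homogeneity of the $h_{\qa,p}$. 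The expected main obstacle is precisely this last consistency: one must show that the quadratic-in-$f$ generating expression $\qp\mathcal F_0/\qp s_m$ is compatible with the defining relations $\qp_{t^{\qa,p}}\qp_{t^{\qb,q}}\mathcal F_0=\Qo_{\qa,p;\qb,q}$, i.e. that all second $t$-derivatives of $\qp\mathcal F_0/\qp s_m$ close up into differential polynomials in $v$. This is the content of the linearization property of the Virasoro symmetries on the tau function, and it is here that the detailed structure of the operators $L_m^{even}$ — including the normalization $\mu_1=-d/2$ and the WDVV identities underlying $\Qo$ — is used; the complete verification is the one carried out in \cite{dubrovin1999frobenius}.
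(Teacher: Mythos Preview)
The paper does not prove this theorem at all: it is stated with the citation \cite{dubrovin1999frobenius} and no proof environment follows. So there is nothing in the paper to compare your proposal against --- the authors simply quote the result.

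Your proposal is a reasonable high-level sketch of how such a proof goes (Sugawara-type commutator computation for part (1), compatibility of $\qp\mathcal F_0/\qp s_m$ with the defining relations of $\Qo$ for part (2)), and indeed you yourself end by deferring the ``complete verification'' to \cite{dubrovin1999frobenius}. That is exactly what the paper does, only without the sketch. If your intent was to reproduce the paper's argument, you have overshot: a one-line ``See \cite{dubrovin1999frobenius}'' matches the paper. If your intent was to supply an independent proof, then the proposal is still only an outline --- the block-by-block $P_m$ identity in (1) and the closed-form identity for $\qp_{s_m}\Qo_{\qa,p;\qb,q}$ in (2) are asserted rather than carried out --- and the actual work remains in the reference.
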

To construct the Virasoro symmetries of the super tau-cover of the principal hierarchy, we need to modify the operators $L^{even}_m$ to include the odd time variables $\qt_n$.
\begin{Lem}
\label{Vir}
Let $c_0\in\mathbb C$ be an arbitrary constant, and denote:
\begin{equation}
\label{L}
L_m = L_m^{even}+L_m^{odd},\quad L^{odd}_m = \sum_{k\geq 0}(k+c_0)\qt_k\diff{}{\qt_{k+m}},\quad m\geq -1.
\end{equation}
Then the operators $L_m$ satisfy the commutation relations $[L_m,L_n] = (m-n)L_{m+n}$.
\end{Lem}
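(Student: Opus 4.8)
The plan is to use that $L_m^{even}$ and $L_m^{odd}$ act on disjoint sets of variables: $L_m^{even}$ is a polynomial differential operator in the even times $t^{\qa,p}$ and the derivations $\partial/\partial t^{\qa,p}$ only, while $L_m^{odd}$ is built only from the odd times $\qt_k$ and the derivations $\partial/\partial\qt_k$. Consequently the mixed brackets $[L_m^{even},L_n^{odd}]$ and $[L_m^{odd},L_n^{even}]$ vanish, and all of the operators involved are of even parity (a product of two even, resp. two odd, factors), so the brackets are ordinary commutators. Hence
\[
[L_m,L_n]=[L_m^{even},L_n^{even}]+[L_m^{odd},L_n^{odd}],
\]
and the even part already equals $(m-n)L_{m+n}^{even}$ by \eqref{zh-12}. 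Thus the claim reduces to the single identity $[L_m^{odd},L_n^{odd}]=(m-n)L_{m+n}^{odd}$.

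To prove this I would compute the bracket of the elementary building blocks $X_j^m:=\qt_j\,\partial/\partial\qt_{j+m}$ and $X_k^n:=\qt_k\,\partial/\partial\qt_{k+n}$ directly, using the super-Leibniz rule $\frac{\partial}{\partial\qt_a}(\qt_b\,\cdot)=\qd_{ab}-\qt_b\frac{\partial}{\partial\qt_a}(\cdot)$. Expanding $X_j^m X_k^n$ produces a first-order term proportional to $\qd_{j+m,k}$ together with the second-order term $-\qt_j\qt_k\,\frac{\partial^2}{\partial\qt_{j+m}\partial\qt_{k+n}}$; writing out $X_k^n X_j^m$ and subtracting, the second-order contributions are
\[
-\qt_j\qt_k\,\frac{\partial^2}{\partial\qt_{j+m}\partial\qt_{k+n}}+\qt_k\qt_j\,\frac{\partial^2}{\partial\qt_{k+n}\partial\qt_{j+m}},
\]
which cancel because $\qt_j\qt_k=-\qt_k\qt_j$ and $\frac{\partial^2}{\partial\qt_{k+n}\partial\qt_{j+m}}=-\frac{\partial^2}{\partial\qt_{j+m}\partial\qt_{k+n}}$ (the case $j+m=k+n$ being trivially zero). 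What survives is $[X_j^m,X_k^n]=\qd_{j+m,k}\,\qt_j\frac{\partial}{\partial\qt_{k+n}}-\qd_{k+n,j}\,\qt_k\frac{\partial}{\partial\qt_{j+m}}$. Multiplying by $(j+c_0)(k+c_0)$, summing over $j,k$, and performing the two index substitutions $k=j+m$ and $j=k+n$ respectively, I obtain
\[
[L_m^{odd},L_n^{odd}]=\sum_{j}(j+c_0)\big[(j+m+c_0)-(j+n+c_0)\big]\,\qt_j\frac{\partial}{\partial\qt_{j+m+n}}=(m-n)\,L_{m+n}^{odd}.
\]

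There is no serious obstacle here; the only points needing care are bookkeeping ones. All sums are understood to run over indices for which the $\qt$-subscripts are nonnegative, so that e.g. for $m=-1$ the $k=0$ summand is absent, and one never needs $L_{m+n}$ with $m+n<-1$ since then the coefficient $m-n$ vanishes (for instance $[L_{-1},L_{-1}]=0$). The one conceptual ingredient is the parity of the odd time variables: it is precisely because each $\qt_k$ is Grassmann-odd — equivalently, each $\partial/\partial\qt_k$ is an odd derivation — that the double sign flip above forces the second-order terms to cancel, leaving a closed first-order expression. Combining the three displayed identities yields $[L_m,L_n]=(m-n)L_{m+n}$ for all $m,n\ge -1$, as claimed.
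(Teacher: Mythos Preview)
Your proof is correct and follows the same approach as the paper: split $L_m=L_m^{even}+L_m^{odd}$, use that the mixed brackets vanish since the two pieces act on disjoint sets of variables, invoke \eqref{zh-12} for the even part, and verify $[L_m^{odd},L_n^{odd}]=(m-n)L_{m+n}^{odd}$ directly. The paper merely asserts this last identity as ``easy to show'', whereas you supply the explicit computation with the building blocks $X_j^m$; your bookkeeping remarks about index ranges and the case $m=n=-1$ are accurate.
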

\begin{proof}
It is easy to show that 
\[[L_m^{odd},L_n^{odd}] = (m-n)L_{m+n}^{odd}.\]
Then the lemma follows from \eqref{zh-12} and the fact that $[L_m^{even},L_n^{odd}] = 0$.
\end{proof}
\begin{Th}
\label{super-3}
The super tau-cover of the principal hierarchy described in Theorem \ref{super-2} has the following Virasoro symmetries:
\begin{align*}
\frac{\qp f_{\qa, p}}{\qp s_m}&=
\frac{\qp f_{\qa, p}}{\qp s^{even}_m}+\frac{\qp f_{\qa, p}}{\qp s^{odd}_m},
\quad \diff{\Phi^n_{\qa,p}}{s_m}=\frac{\qp}{\qp \tau_n}\left(\frac{\qp f_{\qa,p}}{\qp s_m}\right),\\
\frac{\qp v^\qa}{\qp s_m}&=
\frac{\qp v^\qa}{\qp s^{even}_m}+\frac{\qp v^\qa}{\qp s^{odd}_m},\quad \frac{\qp \sigma_{\qa,p}}{\qp s_m}=\frac{\qp}{\qp \tau_p}\left(\frac{\qp f_{\qa,0}}{\qp s_m}\right),
\end{align*}
where $\frac{\qp}{\qp s^{even}_m}$, $\frac{\qp}{\qp s^{odd}_m}$  are defined respectively by the right hand sides of \eqref{temp3-1} 
and 
\[\frac{\qp f_{\qa, p}}{\qp s^{odd}_m}=\sum_{k\geq 0}(k+c_0)\qt_k\diff{f_{\qa,p}}{\qt_{k+m}},\quad 
\frac{\qp v^\qa}{\qp s^{odd}_m}=\eta^{\qa\qb}\sum_{k\geq 0}(k+c_0)\qt_k\frac{\qp}{\qp t^{1,0}}\diff{f_{\qb,0}}{\qt_{k+m}}\]
with the flows of $f_{\qa, p}$ along $\tau_k$ given by \eqref{f-tau}.
These Virasoro symmetries satisfy the Virasoro commutation relations
\[\left[\frac{\qp}{\qp s_k},\frac{\qp}{\qp s_m}\right]=(m-k) \frac{\qp}{\qp s_{k+m}},\quad k, m\ge -1.\]
\end{Th}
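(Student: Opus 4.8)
The plan is to verify three things in turn: that the flows $\qp/\qp s_m$ are well-defined derivations of the super tau-cover, that they commute with every flow appearing in Theorems~\ref{super-1} and \ref{super-2}, and that they satisfy the Virasoro bracket. Since two flows of an evolutionary system commute as soon as they commute on a set of generators, it suffices to work with the dependent variables $v^\qa$, $\qth_\qa=\qs_{\qa,0}$, $f_{\qa,p}$ and $\Phi_{\qa,p}^n$; moreover $v^\qa=\eta^{\qa\qb}\qp_x f_{\qb,0}$ (from $f_{\qb,0}'=h_{\qb,0}=\eta_{\qb\qg}v^\qg$), $\qs_{\qa,k}=\Phi_{\qa,0}^k=\qp f_{\qa,0}/\qp\qt_k$ and $\Phi_{\qa,p}^n=\qp f_{\qa,p}/\qp\qt_n$, so the whole super tau-cover is generated by the $f_{\qa,p}$ under the action of $\qp_x$ and the odd flows $\qp/\qp\qt_n$. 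We write $\qp/\qp s_m=\qp/\qp s_m^{even}+\qp/\qp s_m^{odd}$ throughout.

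The first step is well-definedness together with commutation with the flows $\qp/\qp t^{\qb,q}$. For the even part these are contained in the cited theorem, extended to the super variables $\qs_{\qa,k}$, $\Phi_{\qa,p}^n$ through $\qp/\qp\qt_k$ and $\qp/\qp\qt_n$; this extension is forced and is automatically compatible with the recursions \eqref{biham-rec} and \eqref{phi-def}, since the odd flows already are (Lemmas~\ref{Z-lem} and \ref{exp-phi}) and since $\qp/\qp s_m^{even}$ touches the super variables only through $v^\qa$. For the odd part one first observes that $\qp/\qp s_m^{odd}$ acts on the $f$- and $v$-variables exactly as the operator $L_m^{odd}=\sum_{k\ge 0}(k+c_0)\qt_k\,\qp/\qp\qt_{k+m}$; compatibility with $f_{\qa,p}'=h_{\qa,p}$, \eqref{biham-rec} and \eqref{phi-def} then reduces to the identity $\qp_x\!\left(L_m^{odd}f_{\qa,p}\right)=L_m^{odd}h_{\qa,p}$ together with its $\qs$- and $\Phi$-analogues, which follow from \eqref{phi-def}, \eqref{seg-tau-p}, \eqref{phi-tau} and $\Phi_{\qa,0}^n=\qs_{\qa,n}$. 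Commutation with $\qp/\qp t^{\qb,q}$ now follows --- for the even part from the cited theorem plus propagation to $\qs,\Phi$, and for the odd part because the $\qt_k$ are independent of the $t^{\qb,q}$ while $[\qp/\qp\qt_{k+m},\qp/\qp t^{\qb,q}]=0$ by Theorem~\ref{super-2}. In particular $[\qp/\qp s_m,\qp_x]=0$.

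The main step is commutation with the odd flows $\qp/\qp\qt_n$. On $f_{\qa,p}$ the bracket $[\qp/\qp\qt_n,\qp/\qp s_m]$ vanishes by the very definition $\qp\Phi_{\qa,p}^n/\qp s_m=(\qp/\qp\qt_n)(\qp f_{\qa,p}/\qp s_m)$; on $v^\qa=\eta^{\qa\qb}\qp_x f_{\qb,0}$ it follows by applying $\qp_x$, since $\qp/\qp s_m$ and $\qp/\qp\qt_n$ both commute with $\qp_x$. What remains is the vanishing of $[\qp/\qp\qt_n,\qp/\qp s_m]$ on $\qth_\qa=\qs_{\qa,0}$, and, in the resonant case, on the jet-level-zero variables $\Phi_{\qa,p}^n$; from these the general case follows by applying $\qp/\qp\qt_k$ and using graded Jacobi together with $[\qp/\qp\qt_k,\qp/\qp\qt_l]=0$. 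Checking these base cases is where the real work lies: one has to (i) keep track of the super signs, since $\qt_n$ and $\qp/\qp\qt_n$ are odd, so that the relevant graded brackets carry the signs already present in \eqref{seg-tau-p} and \eqref{phi-tau}, and (ii) account for the extra ``index-shift'' contribution $(n+c_0)\qs_{\qa,n+m}$ (resp.\ $(n+c_0)\Phi_{\qa,p}^{n+m}$) that $\qp/\qp\qt_n$ produces from the explicit odd times in $L_m^{odd}$, so that $\qp\qs_{\qa,n}/\qp s_m^{odd}=(n+c_0)\qs_{\qa,n+m}+L_m^{odd}\qs_{\qa,n}$ and similarly for $\Phi_{\qa,p}^n$. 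I expect this to be the main obstacle; the cleanest way to carry it out is in terms of the generating functions $c_\qa(\ql)$ of Lemma~\ref{Z-lem} and their analogues for the $\Phi_{\qa,p}^n$.

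Finally, the Virasoro commutation relations. Once $\qp/\qp s_m$ is known to be a symmetry, decompose $[\qp/\qp s_k,\qp/\qp s_m]$ into its even--even, even--odd, odd--even and odd--odd parts. The even--even part equals $(m-k)\,\qp/\qp s_{k+m}^{even}$, the translation of \eqref{zh-12} to the Virasoro symmetries; the odd--odd part equals $(m-k)\,\qp/\qp s_{k+m}^{odd}$, obtained from $[L_k^{odd},L_m^{odd}]=(k-m)L_{k+m}^{odd}$ (established inside the proof of Lemma~\ref{Vir}) transported through the defining formulas. Each cross part vanishes: on $v^\qa$ and $f_{\qa,p}$ because $[L_k^{even},L_m^{odd}]=0$ (Lemma~\ref{Vir}) and there $\qp/\qp s_k^{even}$ acts as a function of $v$ while $\qp/\qp s_m^{odd}$ acts as $L_m^{odd}$, so the mixed term collapses to the operator bracket; and on $\qs,\Phi$ because $[\qp/\qp s_k^{even},\qp/\qp s_m^{odd}]$ commutes with $\qp_x$ and with $\qp/\qp\qt_p$ by the previous paragraphs and vanishes on $f_{\qa,p},v^\qa$. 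Adding the four contributions gives $[\qp/\qp s_k,\qp/\qp s_m]=(m-k)\,\qp/\qp s_{k+m}$ on the generators, hence on the whole super tau-cover.
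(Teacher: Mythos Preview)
Your overall architecture is sound and close to the paper's: reduce everything to the generators $f_{\qa,p}$, use the definitions to make $[\qp/\qp\qt_n,\qp/\qp s_m]$ vanish on $f$ and $v$ for free, invoke \cite{dubrovin1999frobenius} for $[\qp/\qp t^{\qb,q},\qp/\qp s_m]$ on $f$ and $v$, and isolate the real content in the odd--odd commutator on $\qs$ and $\Phi$. Your treatment of the Virasoro bracket in the last paragraph is also essentially what the paper does (via Lemma~\ref{Vir}).

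The gap is that you stop precisely where the proof begins. You write ``Checking these base cases is where the real work lies \dots\ I expect this to be the main obstacle; the cleanest way to carry it out is in terms of the generating functions $c_\qa(\ql)$.'' This is not a proof but a plan, and the paper's argument shows that the generating-function route is \emph{not} how the verification actually goes. Two concrete points:
\begin{itemize}
\item For $[\qp/\qp s_m,\qp/\qp t^{\qa,p}]\Phi^n_{\qb,q}=0$ the paper does not simply ``propagate''. It uses the key Dubrovin--Zhang identity
\[
E^{m+1}\Qo_{\qa,p;\qb,q}=2a_m^{\ql,k;\qe,l}\Qo_{\qa,p;\ql,k}\Qo_{\qe,l;\qb,q}+{b_m}^{\ql,k}_{\qa,p}\Qo_{\ql,k;\qb,q}+{b_m}^{\ql,k}_{\qb,q}\Qo_{\ql,k;\qa,p}+2c^m_{\qa,p;\qb,q},
\]
which encodes the compatibility between the Virasoro coefficients and the Frobenius structure. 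Your sketch never invokes this identity, and without it (or an equivalent) the reduction to $[\qp/\qp\qt_n,\qp/\qp s_m]v^\qg=0$ is not justified, because $\qp f_{\qa,p}/\qp s_m^{even}$ contains the bilinear term $a_m^{\ql,k;\qe,l}f_{\ql,k}\Qo_{\qe,l;\qa,p}$ whose $\qt_n$-derivative produces $\Phi$'s, not just functions of $v$.
\item For $[\qp/\qp s_m,\qp/\qp\qt_k]\Phi^n_{\qb,q}=0$ the paper does not use generating functions at all. It reduces (via \eqref{del-def}) to the single case $k=0,\ n=1$, writes the two sides as explicit expressions $A$ and $B$ in $\qs_{\mu,0},\qs_{\qd,0}^1,\qs_{\zeta,1}$, and then checks $A=B$ by a direct case-by-case computation for $m=-1,0,1,2$ using \eqref{mu-eta}, \eqref{c-hom}, \eqref{gam-v} and the explicit form of the Virasoro coefficients $a_m,b_m$. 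The general $m$ then follows from the Virasoro relations already established at the operator level. Your index-shift observation $\qp\qs_{\qa,n}/\qp s_m^{odd}=(n+c_0)\qs_{\qa,n+m}+L_m^{odd}\qs_{\qa,n}$ is correct and needed, but it is only one ingredient; the matching with the even part is where the Frobenius identities above enter, and that is what you have not done.
\end{itemize}
In short: the skeleton is right, but the load-bearing computation is missing, and the tool you propose (generating functions $c_\qa(\ql)$) is not the one that actually closes the argument.
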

\begin{proof}
The relations
\[\left[\diff{}{s_m},\diff{}{t^{\qa,p}}\right]f_{\qb,q}=0,\quad \left[\diff{}{s_m},\diff{}{t^{\qa,p}}\right]v^\qb=0,\]
follow from the well-known facts proved in \cite{dubrovin1999frobenius}:
\[\left[\diff{}{s_m^{even}},\diff{}{t^{\qa,p}}\right]f_{\qb,q}=0,\quad \left[\diff{}{s_m^{even}},\diff{}{t^{\qa,p}}\right]v^\qb=0,\]
and the obvious relation
\[\left[\diff{}{s_m^{odd}},\diff{}{t^{\qa,p}}\right]=0.\]
At the same time the relations
\[\left[\diff{}{s_m},\diff{}{\qt_n}\right]f_{\qb,q}=0,\quad \left[\diff{}{s_m},\diff{}{\qt_n}\right]v^\qb=0,\]
are trivial due to the definitions.

Let us proceed to prove that
\begin{equation}
\label{Vir-pr1}
\left[\diff{}{s_m},\diff{}{t^{\qa,p}}\right]\Phi_{\qb,q}^n=0,\quad \left[\diff{}{s_m},\diff{}{t^{\qa,p}}\right]\qs_{\qb,n}=0.\end{equation}
In view of the fact that $\Phi_{\qb,0}^n =\qs_{\qb,n}$, clearly we only need to prove the first relation. We first recall a useful identity proved in \cite{dubrovin1999frobenius}.  Let us write the Virasoro operator as:
\begin{align*}
L_m &= \sum a_m^{\qa,p;\qb,q}\frac{\qp^2}{\qp t^{\qa,p}\qp t^{\qb,q}}+\mathcal L_m+\sum c^m_{\qa,p;\qb,q}t^{\qa,p}t^{\qb,q}+\frac{1}{4}\qd_{m,0}\mathrm{tr}\left(\frac{1}{4}-\mu^2\right),\\
\mathcal L_m &=\sum {b_m}_{\qb,q}^{\qa,p}t^{\qb,q}\diff{}{t^{\qa,p}}+\sum_{k\geq 0}(c_0+k)\qt_k\diff{}{\qt_{k+m}}.
\end{align*}
Then the coefficients appearing in the operator $L_m$ are related to the power of the Euler vector field $E$ by the following:
\begin{equation*}
E^{m+1}\Qo_{\qa,p;\qb,q} = 2 a_m^{\ql,k;\qe,l}\Qo_{\qa,p;\ql,k}\Qo_{\qe,l;\qb,q}+{b_m}^{\ql,k}_{\qa,p}\Qo_{\ql,k;\qb,q}+{b_m}^{\ql,k}_{\qb,q}\Qo_{\ql,k;\qa,p}+2c^m_{\qa,p;\qb,q}.
\end{equation*}
Using this relation, it is straightforward to conclude that proving the relation
\begin{equation*}
\left[\diff{}{s_m},\diff{}{t^{\qa,p}}\right]\Phi_{\qb,q}^n=0,
\end{equation*}
is equivalent to show that the following identity holds true:
\begin{equation*}
\diff{}{\qt_n}\left(E^{m+1}\Qo_{\qa,p;\qb,q}\right) = E^{m+1}\left(\qp^\qg\Qo_{\qa,p;\qb,q}\right)\qs_{\qg,n}^1+\qp^\qg\Qo_{\qa,p;\qb,q}\diff{}{\qt_n}\left(E^{m+1}v_{\qg}\right),
\end{equation*}
here $\qp^\qg = \eta^{\qg\ql}\qp_\ql$ and $v_\qg =  \eta_{\qg\ql}v^\ql$. This identity holds true trivially, indeed, for any vector field $X$ on the Frobenius manifold $M$, and any function $h$, it is easy to check that:
\[
\qs_{\qg,n}^1[\qp^\qg, X]h = \diff{}{\qt_n}(Xv_\qg)\qp^\qg h.
\]
Therefore we prove the validity of the relations \eqref{Vir-pr1}.

Finally we are to prove that
\begin{equation*}
\left[\diff{}{s_m},\diff{}{\qt_k}\right]\Phi_{\qb,q}^n=0,\quad \left[\diff{}{s_m},\diff{}{\qt_k}\right]\qs_{\qb,n}=0, \end{equation*}
which is equivalent to show that:
\begin{equation}
\label{Vir-pr2}
\diff{}{s_m}\diff{}{\qt_n}\diff{f_{\ql,0}}{\qt_k}-\diff{}{\qt_n}\diff{}{\qt_k}\diff{f_{\ql,0}}{s_m} = 0.
\end{equation}
Let us only prove \eqref{Vir-pr2} for the case $k=0, n=1$. The general case can be proved by induction using the definition \eqref{del-def}. By a straightforward computation, the validity of \eqref{Vir-pr2} is equivalent to prove the following two expressions are equal:
\begin{align}
\nonumber
A = &2 a_m^{\qa,p;\qb,q}\left(\diff{\Qo_{\qa,p;\ql,0}}{\qt_1}\Phi_{\qb,q}^0-\diff{\Qo_{\qa,p;\ql,0}}{\qt_0}\Phi_{\qb,q}^1\Qo_{\qa,p;\ql,0}\Qd^{1,0}_{\qb,q}\right)\\
\label{Vir-pr3}
&+{b_m}^{\qa,p}_{\ql,0}\Qd^{1,0}_{\qa,p}+\Qd_{\ql,0}^{m+1,0};\\
\nonumber
B = &E^{m+1}(\Qg^{\qd\mu}_\ql)\qs_{\mu,0}\qs_{\qd,0}^1+\Qg^{\qd\mu}_\ql\left({b_m}^{\qa,p}_{\mu,0}\Phi^0_{\qa,p}+2a_m^{\qa,p;\qb,q}\Qo_{\qa,p;\mu,0}\Phi^0_{\qb,q}\right)\qs_{\qd,0}^1\\
\nonumber
&+\Qg^{\qd\mu}_\ql\qs_{\mu,0}\left({b_m}^{\qa,p}_{\qd,0}\qp_x\Phi^0_{\qa,p}+{b_m}^{\qa,p}_{1,0}\diff{\qs_{\qd,0}}{t^{\qa,p}}\right)\\
\label{Vir-pr4}
&+2a_m^{\qa,p;\qb,q}\left(\Qo_{\qa,p;\qd,0}^\prime\Phi_{\qb,q}^0+\Qo_{\qa,p;\qd,0}\diff{\Qo_{\qb,q;1,0}}{\qt_0}+\diff{\Qo_{\qa,p;\qd,0}}{\qt_0}\Qo_{\qb,q;1,0}\right).
\end{align}

We prove that $A = B$ in the next Lemma and therefore the theorem is proved.

\end{proof}

\begin{Lem}
The two expressions \eqref{Vir-pr3} and \eqref{Vir-pr4} are equal.
\end{Lem}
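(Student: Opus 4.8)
The plan is to verify the identity $A=B$ by a direct, if lengthy, computation that expands both sides using the definitions introduced in Section~\ref{sec4}, and then matches terms by grouping them according to their structure in the super variables $\qs_{\mu,0},\qs_{\qd,0}^1$ and in the Frobenius-manifold data $\Qo$, $\Qg$, $c$. Before starting, I would record the key inputs that will be used repeatedly: the definition \eqref{del-def} of $\Qd^{k,n}_{\qa,p}$ (so that $\Qd^{1,0}_{\qa,p}=\eta^{\qg\ql}\qp_\ql h_{\qa,p}\Qg^{\qd\mu}_\qg\qs_{\mu,0}\qs^1_{\qd,0}$), the identity \eqref{omg-ini} relating $\Qo_{\qa,p;\qb,0}$ to $\qp_\qb h_{\qa,p+1}$, the recursion \eqref{hamil-rec}, the bihamiltonian recursion \eqref{biham-rec} for the $\qs$'s, the formulae \eqref{gam-v}, \eqref{g-gam} expressing $g$ and $\Qg$ through $c$ and $\mu$, and the key quasi-homogeneity relation from \cite{dubrovin1999frobenius} recalled in the proof of Theorem~\ref{super-3},
\[
E^{m+1}\Qo_{\qa,p;\qb,q}=2a_m^{\ql,k;\qe,l}\Qo_{\qa,p;\ql,k}\Qo_{\qe,l;\qb,q}+{b_m}^{\ql,k}_{\qa,p}\Qo_{\ql,k;\qb,q}+{b_m}^{\ql,k}_{\qb,q}\Qo_{\ql,k;\qa,p}+2c^m_{\qa,p;\qb,q},
\]
together with the observation already used in the proof that $\qs^1_{\qg,n}[\qp^\qg,X]h=\diff{}{\qt_n}(Xv_\qg)\qp^\qg h$ for any vector field $X$ and function $h$ on $M$.

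First I would rewrite $A$ entirely in terms of $M$-geometry. Using \eqref{omg-ini}, $\Phi^0_{\qb,q}=\diff{f_{\qb,q}}{\qt_0}$ with $(\Phi^0_{\qb,q})'=\diff{h_{\qb,q}}{\qt_0}=\qp^\qg h_{\qb,q}\qs_{\qg,0}^1$, and $\diff{\Qo_{\qa,p;\qb,0}}{\qt_n}=\diff{(\qp_\qb h_{\qa,p+1})}{\qt_n}=\qp_\mu\qp_\qb h_{\qa,p+1}\eta^{\mu\qg}\qs^1_{\qg,n}$. So $A$ becomes, after substituting $\Qd^{1,0}$ and $\Qd^{m+1,0}$, a polynomial in $\qs_{\mu,0}\qs^1_{\qd,0}$ whose coefficient is built from $a_m$, $b_m$, the $\Qg$'s and second derivatives of $h$. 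Next I would apply $\diff{}{\qt_1}$ and the $\diff{}{s_m}$-action to $\diff{f_{\ql,0}}{\qt_0}=\Phi^0_{\ql,0}=\qs_{\ql,0}$ and carefully expand $B$, pushing all $\diff{}{t^{\qa,p}}$-derivatives through using Theorem~\ref{super-1} and the tau-cover relations \eqref{f-t}--\eqref{phi-tau}, so that $B$ is also presented as a coefficient times $\qs_{\mu,0}\qs^1_{\qd,0}$ (absorbing the $\qp_x$-exact pieces, which vanish under the relevant identification since the two sides differ by $\diff{}{\qt_n}$ of the tau-cover). The heart of the matching is then to feed the displayed $E^{m+1}\Qo$ identity into $A$ — this is exactly what converts the "$a_m,b_m,c^m$" data into an Euler-derivative $E^{m+1}$ acting on $\Qo$, which is precisely the shape of the first term $E^{m+1}(\Qg^{\qd\mu}_\ql)\qs_{\mu,0}\qs^1_{\qd,0}$ in $B$ once one recalls $\Qg^{\qd\mu}_\ql=(\tfrac12-\mu_\mu)c^{\qd\mu}_\ql$ and uses \eqref{c-hom} to compute $E(\Qg^{\qd\mu}_\ql)$.

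The main obstacle I anticipate is bookkeeping the cross terms: after substituting the $E^{m+1}\Qo$ identity, both $A$ and $B$ contain three qualitatively different kinds of terms — purely quadratic-in-$\Qo$ pieces coming from $a_m$, linear-in-$\Qo$ pieces from $b_m$, and ``constant'' pieces from $c^m$ and from the $\tfrac14\qd_{m,0}\mathrm{tr}(\tfrac14-\mu^2)$ shift — and one must check that these match block by block. The $a_m$-block reduces (after using tau-symmetry $\diff{h_{\qa,p}}{t^{\qb,q}}=\diff{h_{\qb,q}}{t^{\qa,p}}$ and \eqref{omg-ini}) to an identity among $\Qo$'s that is symmetric in a way forcing cancellation via the associativity of $c^\qg_{\qa\qb}$ and \eqref{g-gam}, just as in the proof of Lemma~\ref{exp-phi}; the $b_m$-block requires the commutator identity $\qs^1_{\qg,n}[\qp^\qg,X]h=\diff{}{\qt_n}(Xv_\qg)\qp^\qg h$ applied with $X$ the relevant homogeneous vector field built from $b_m$; and the $c^m$-block is a direct check. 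I would organize the proof so that each block is isolated as a displayed sub-identity, prove the $a_m$-block first (it is the most involved and uses WDVV), then the $b_m$-block (using the quasi-homogeneity \eqref{homog} of $\qp_\qb h_{\qa,p}$ and \eqref{c-hom}), and finally the $c^m$-block, concluding $A=B$. Throughout, the residue/degree argument from Lemma~\ref{Z-lem} — that an odd differential polynomial with vanishing $\qp_x$-derivative and no odd constants must itself vanish — can be invoked to reduce several intermediate identities to their $\qp_x$-derivatives, which are easier to verify.
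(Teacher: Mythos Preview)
Your route differs substantially from the paper's. The paper does \emph{not} attempt a uniform argument for all $m$: it simply verifies $A=B$ by direct computation for $m\in\{-1,0,1,2\}$, plugging in the explicit Virasoro coefficients $a_m^{\qa,p;\qb,q}$, ${b_m}^{\qa,p}_{\qb,q}$ from \cite{dubrovin1999frobenius} in each case and checking the resulting finite identities among $c^{\qa\qb}_\qg$, $\Qg^{\qa\qb}_\qg$, $\eta$, $\mu$ using \eqref{mu-eta}, \eqref{c-hom}, \eqref{gam-v}. These four values of $m$ suffice because the half-Virasoro algebra $\{L_m\}_{m\ge -1}$ is generated by low operators, so Lemma~\ref{Vir} together with the established commutation propagates the symmetry to all $m$.

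Your plan, by contrast, aims to treat general $m$ at once by splitting both sides into $a_m$-, $b_m$-, $c^m$-blocks and invoking the $E^{m+1}\Qo$ identity. That is more structurally appealing, but your sketch skips the genuinely hard step: the term $\Qd^{m+1,0}_{\ql,0}=\Qg^{\qd\mu}_\ql\sum_{i=0}^{m}\qs_{\mu,i}\qs^1_{\qd,m-i}$ in $A$ involves super variables $\qs_{\cdot,i}$ up to level $m$, and the term $E^{m+1}(\Qg^{\qd\mu}_\ql)$ in $B$ is an $(m{+}1)$-fold iterated Euler derivative; neither is of the ``coefficient on $M$ times $\qs_{\mu,0}\qs^1_{\qd,0}$'' form you describe without iterating the bihamiltonian recursion \eqref{biham-rec} $m$ times, and you do not say how that iteration is organized or why the resulting expressions match. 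Moreover the $E^{m+1}\Qo$ identity concerns $\Qo$, not $\Qg$ or $\Qd$, so ``feeding it into $A$'' is not an immediate substitution. The paper sidesteps all of this by writing both sides out for small $m$; if you insist on a general-$m$ proof, the reduction of $\Qd^{m+1,0}_{\ql,0}$ and the closed form of $E^{m+1}(\Qg)$ are the pieces that must be made precise.
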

\begin{proof}
We will still use $A$ to denote the expression \eqref{Vir-pr3} and $B$ for \eqref{Vir-pr4}. The proof is a straightforward computation for the case $-1\leq m\leq 2$.

For $m = -1$, according to \cite{dubrovin1999frobenius},the coefficients appearing in the Virasoro operator read: 
\[{b_{-1}}^{\qa,p}_{\qb,q} = \qd^{\qa}_\qb\qd^p_{q-1},\quad a_{-1}^{\qa,p;\qb,q} = 0.\]
So we easily see that $A = 0$ and $B = \qp_1\Qg^{\qd\mu}_\ql\qs_{\mu,0}\qs_{\qd,0}^1$. Using the identity \eqref{gam-v}, we conclude that $B = 0$.

For $m = 0$, the coefficients read \cite{dubrovin1999frobenius}:
\[{b_{0}}^{\qa,p}_{\qb,q} = \qd^{\qa}_\qb\qd^p_{q}\left(p+\frac 12+\mu_\qa\right)+\sum_{1\leq r\leq q}(R_r)^\qa_\qb\qd^p_{q-r},\quad a_{0}^{\qa,p;\qb,q} = 0.\]
Then we come to the expression for $A$ and $B$:
\begin{align*}
A &= \left(\frac{3}{2}+\mu_\ql\right)\diff{\qs_{\ql,0}}{\qt_1}\\
B &= E(\Qg_\ql^{\qa\qb})\qs_{\qb,0}\qs_{\qa,0}^1+\left(\frac{3}{2}+\mu_\qb+\mu_\qa+\mu_1\right)\Qg_\ql^{\qa\qb}\qs_{\qb,0}\qs_{\qa,0}^1
\end{align*}
Using the identity \eqref{gam-v} and the homogeneous condition \eqref{c-hom}, we derive the homogeneous condition satisfied by the Christoffel symbol:
\[E(\Qg_\ql^{\qa\qb}) = (\mu_\ql-\mu_\qa-\mu_\qb-\mu_1)\Qg_\ql^{\qa\qb}.\]
This proves $A=B$ for the case $m=0$.

For the case $m=1$, it is similar but more complicated. First we can compute the coefficients \cite{dubrovin1999frobenius}:
\begin{align*}
a_1^{\qa,p;\qb,q} &= \qd_p^0\qd_q^0\frac{1}{2}\eta^{\qa\qb}\left(\frac 12+\mu_\qa\right)\left(\frac 12+\mu_\qb\right);\\
{b_1}^{\qa,p}_{\qb,q} &= \qd^\qa_\qb\qd^p_{q+1}\left(q+\frac 12+\mu_\qa\right)\left(q+\frac 32+\mu_\qa\right)\\&+\sum_{r=1}^{q+1}(R_r)^\qa_\qb(2q+2+2\mu_\qb)\qd^p_{q-r+1}+\sum_{r=2}^{q+1}(R_{r,2})^\qa_\qb\qd^p_{q-r+1}.
\end{align*}
Then after considering the recursion relations \eqref{biham-rec}, we can divide the terms in $A$ and $B$ into three groups: the first group consists of the terms containing $\qs_{\zeta,1}\qs_{\qd,0}^1$, the second group consists of terms containing $\qs_{\zeta,0}\qs_{\qd,0}$ and the third consists of terms containing $\qs_{\zeta,0}\qs_{\qd,0}^1$. We need to verify that for each group, the terms in $A$ equal to those in $B$. The verification is straightforward and therefore we only take terms in the first group as an example. Computing the terms of the first group in $A$ and $B$, we conclude that the following identity needs verifying:
\begin{equation*}
\left(\frac 12+\mu_\zeta\right)\Qg^{\qd\zeta}_\ql\qs_{\zeta,1}\qs_{\qd,0}^1 = \eta^{\qa\qb}\left(\frac 12+\mu_\qa\right)\left(\frac 12+\mu_\qb\right)\qs_{\qb,1}\diff{\Qo_{\qa,0;\ql,0}}{\qt_0}
\end{equation*}
By applying the equation \eqref{hamil-rec} and \eqref{omg-ini}, one concludes that it is equivalent to prove the following:
\begin{equation*}
\Qg^{\qd\zeta}_\ql = \eta^{\qa\zeta}\left(\frac 12+\mu_\qa\right)\eta_{\qa\qb}c^{\qd\qb}_\ql.
\end{equation*}
This identity holds by using \eqref{gam-v} and \eqref{mu-eta}.

In a similar but much more involved way, one can show that for the case $m = 2$ the equality $A=B$ holds and the lemma is proved.

\end{proof}
\begin{Ex}
\label{Vir-kdv}
Let us continue the discussion given in Example \ref{kdv}. The super tau-cover of the Riemann hierarchy can be written down explicitly as described in Theorem \ref{super-2}. The flows of the one point functions
have the expressions
\begin{align*}
\diff{f_k}{t_n} &= \frac{u^{k+n+1}}{(n+k+1)n!k!},\\
\diff{f_k}{\qt_n} &= \frac{\Qg(\frac{1}{2})}{\Qg(\frac{2k+1}{2})}\qs_{n+k}-\frac{1}{2}\sum_{m=0}^{k-1}\frac{\Qg(\frac{2m+1}{2})u^{m+1}}{\Qg(\frac{2k+1}{2})(m+1)!}\qs_{n+k-m-1},
\end{align*}
and the Virasoro operators $L_m$ are given by
\begin{align}
\label{kdv-L-1}
L_{-1}=&\frac{t_0^2}{2}+\sum_{k\geq 0}t_{k+1}\diff{}{t_k}+\sum_{k\geq 1}(k+c_0)\qt_{k}\diff{}{\qt_{k-1}},\\
\label{kdv-L-m}
L_{m}=&\frac{1}{2}\sum_{k+l = m-1}\frac{\Qg(k+\frac{3}{2})\Qg(l+\frac{3}{2})}{\Qg(\frac{1}{2})\Qg(\frac{1}{2})}\frac{\qp^2}{\qp t_k\qp t_l}+\sum_{k\geq 0}\frac{\Qg(k+m+\frac{3}{2})}{\Qg(k+\frac{1}{2})}t_k\diff{}{t_{k+m}}\\\nonumber&+\sum_{k\geq 0}(k+c_0)\qt_k\diff{}{\qt_{k+m}}+\frac{1}{16}\qd_{m,0},\quad m\geq 0.
\end{align}
\end{Ex}


\section{The super tau-cover of the KdV hierarchy}
\label{sec5}
In this section, we give a deformation of the super tau-cover of the principal hierarchy, as it is given in Example \ref{kdv} and Example \ref{Vir-kdv}, of the one-dimensional Frobenius manifold. To this end, we are to employ a certain super extension of the Lax pair of the KdV hierarchy.

Let us first recall the Lax pair formalism of the KdV hierarchy. The Lax operator is given by 
\[L = \frac{\varepsilon^2}{2}\partial_x^2+u(x)-\lambda\] 
with the spectral parameter $\lambda$. The KdV hierarchy is the compatibility conditions of the linear systems
\begin{equation*}  
\begin{cases}
    L\Psi=0, \\
    \frac{\partial \Psi}{\partial t_n}=\frac{1}{2}B_n\Psi^\prime-\frac{1}{4}B_n^\prime\Psi.
\end{cases}
\end{equation*}
The functions $B_n$ are polynomials in $\lambda$ with coefficients being differential polynomials of $u(x)$, and they are determined in the following way. Consider the differential polynomials $R_n$ which are defined by the following recursive relation:
\begin{equation}
\label{rec-R}
(n+\frac{1}{2})\mathcal{P}_0 R_{n+1}=\mathcal{P}_1 R_n,\quad R_0=1,
\end{equation}
where the Hamiltonian operators $\mathcal{P}_0, \mathcal{P}_1$ are defined in \eqref{kdv-bho}, and the integral constant of $R_n$ for $n\geq 1$ is taken to be zero. Then the KdV hierarchy reads 
\[\diff{u}{t_n} = R_{n+1}^\prime,\quad n\ge 0.\] 
Here and in what follows we will use the notation $u_{t_n}:=\diff{u}{t_n}$. Denote
\[
b(\lambda) = \sum_{n\geq 0}\frac{\Gamma(n+\frac{1}{2})}{\lambda^{n+\frac{1}{2}}}R_n,\]
then the functions $B_n$ can be represented as
\[B_n= \frac{1}{\Gamma(n+\frac{3}{2})}(\lambda^{n+\frac{1}{2}}b(\lambda))_+,
\]
where the subscript $+$ means to take the non-negative part of the corresponding Laurent series in $\lambda$.

We introduce the super variables $\qs_n$ which satisfy the relations
\begin{equation}
\label{rec-eta}
\mathcal{P}_0\qs_{n+1}=\mathcal{P}_1\qs_n,\quad n\ge 0,
\end{equation}
and denote
\begin{equation}\label{gen-c}
c(\lambda)= -\sum_{n\geq 0}\frac{1}{\lambda^{n+1}}\qs_n,\quad
C_n= (\lambda^nc(\lambda))_-,
\end{equation}
where the subscript $-$ means to take the negative part of the corresponding Laurent series in $\lambda$.
Consider the following super extension of the Lax pair of the KdV hierarchy:
\begin{equation}
\label{def-superKdV}  
\begin{cases}
    L\Psi = 0, \\
    \frac{\partial \Psi}{\partial t_n} =\frac{1}{2}B_n\Psi^\prime-\frac{1}{4}B_n^\prime\Psi,\\
    \frac{\partial \Psi}{\partial \tau_n} =\frac{1}{2}C_n\Psi^\prime-\frac{1}{4}C_n^\prime\Psi.\\
\end{cases}
\end{equation}
The compatibility condition of these linear systems 
gives a deformation of the super tau-cover of the dispersionless KdV hierarchy, i.e. the principal hierarchy of the one-dimensional Frobenius manifold.
We are to write down the explicit formulae for the evolutions $u_{\tau_n}$, $(\qs_k)_{t_n}$ and $(\qs_k)_{\tau_n}$. To this end, we need the following lemmas and propositions.
\begin{Prop}
\label{u-taun}
We have the following evolutionary equations:
\begin{equation}
\label{u-tau0}
\frac{\partial u}{\partial \tau_n} = \qs_n^\prime,\quad n\ge 0.
\end{equation}
\end{Prop}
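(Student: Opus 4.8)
The plan is to extract the evolution $u_{\tau_n}$ directly from the compatibility of the second and third equations in the super Lax system \eqref{def-superKdV}, exactly paralleling how one derives $u_{t_n}=R_{n+1}^\prime$ from the compatibility of the first two. First I would differentiate $L\Psi=0$ along $\tau_n$: since $L=\frac{\qe^2}{2}\qp_x^2+u-\ql$, we get $u_{\tau_n}\Psi+L\,\Psi_{\tau_n}=0$. Substituting $\Psi_{\tau_n}=\frac12 C_n\Psi^\prime-\frac14 C_n^\prime\Psi$ and using $L\Psi=0$ (hence $\qe^2\Psi^{\prime\prime}=2(\ql-u)\Psi$ and its $x$-derivative) to reduce all derivatives of $\Psi$ of order $\ge 2$, one obtains an identity of the form $(\text{differential expression in }u,C_n)\cdot\Psi+(\cdots)\cdot\Psi^\prime=0$; since $\Psi,\Psi^\prime$ are independent, both coefficients must vanish. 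The coefficient equation is the standard one: it forces
\[
u_{\tau_n}=\tfrac{\qe^2}{8}C_n^{\prime\prime\prime}+u\,C_n^\prime+\tfrac12 u^\prime C_n=\mathcal{P}_1 C_n - \ql\,\mathcal{P}_0 C_n
\]
(after identifying $\mathcal{P}_0=\qp_x$, $\mathcal{P}_1=u\qp_x+\tfrac12 u^\prime+\tfrac{\qe^2}{8}\qp_x^3$ from \eqref{kdv-bho}), where I have written $C_n$ for the relevant Laurent coefficient combination; the $\ql$-dependence must of course cancel, which is where the defining relations of $C_n$ enter.

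The key step is then to unwind the generating-function bookkeeping. From \eqref{gen-c}, $C_n=(\ql^n c(\ql))_-$, and the super variables $\qs_n$ satisfy $\mathcal{P}_0\qs_{n+1}=\mathcal{P}_1\qs_n$ by \eqref{rec-eta}, equivalently $(\mathcal{P}_1-\ql\mathcal{P}_0)c(\ql)=\mathcal{P}_0\qs_0\cdot(\text{something})$ — more precisely, summing the recursion against $\ql^{-n-1}$ shows that $(\mathcal{P}_1-\ql\mathcal{P}_0)c(\ql)$ is a Laurent polynomial in $\ql$ (it telescopes to the nonnegative-degree part), in fact $(\mathcal{P}_1-\ql\mathcal{P}_0)c(\ql)=-\mathcal{P}_0\qs_0=-\qs_0^\prime$ after careful accounting of the leading term. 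Applying the same operator to $C_n=(\ql^n c(\ql))_-=\ql^n c(\ql)-(\ql^n c(\ql))_+$ and taking the appropriate coefficient of $\ql^0$ in $\mathcal{P}_1 C_n-\ql\mathcal{P}_0 C_n$, the positive part $(\ql^n c(\ql))_+$ contributes only to positive powers of $\ql$ (those combine with the $B$-type terms and are absorbed by the $t_n$-compatibility already established), and the $\ql^n c(\ql)$ piece contributes $\ql^n(\mathcal{P}_1-\ql\mathcal{P}_0)c(\ql)=-\ql^n\qs_0^\prime$, whose coefficient of $\ql^0$ vanishes for $n\ge 1$ and equals $-\qs_0^\prime$ for $n=0$; tracking the shift in indices coming from $\ql^n$ against $c(\ql)=-\sum\qs_m\ql^{-m-1}$ produces exactly $u_{\tau_n}=\qs_n^\prime$.

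I expect the main obstacle to be the careful treatment of the "boundary" terms in the Laurent-series splitting — i.e. making sure that the finitely many nonnegative-degree terms in $\ql$ produced by $(\mathcal{P}_1-\ql\mathcal{P}_0)C_n$ genuinely cancel against the structure already guaranteed by the $t_n$-flow, and that the normalization $R_0=1$, the choice of integration constants, and the precise leading coefficient in $c(\ql)$ are all consistent so that no spurious constant survives. Since $\mathcal{\hat A}(M)$ contains no nonzero odd constants (the same fact used in the proof of Lemma \ref{Z-lem}), any leftover $x$-independent odd term must vanish, which gives the clean conclusion $u_{\tau_n}=\qs_n^\prime$ for all $n\ge 0$. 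An alternative, perhaps cleaner route would be to observe that \eqref{u-tau0} for $n=0$ is immediate (it reduces to the $\tau_0$-flow already appearing in Example \ref{kdv}, where $C_0=-\qs_0/\ql\cdot\ql^0$ gives $\mathcal{P}_1 C_0-\ql\mathcal{P}_0 C_0$ with $\ql^0$-part $\qs_0^\prime$), and then induct on $n$ using $\mathcal{P}_0 C_{n+1}=\mathcal{P}_1 C_n+(\text{polynomial in }\ql)$ together with the recursion \eqref{rec-eta} to pass from $u_{\tau_n}$ to $u_{\tau_{n+1}}$; I would present whichever of the two turns out to require less index-chasing.
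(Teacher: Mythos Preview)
Your approach matches the paper's: the compatibility of $L\Psi=0$ with the $\tau_n$-flow in \eqref{def-superKdV} yields $u_{\tau_n}=\mathcal{P}_\lambda C_n$ with $\mathcal{P}_\lambda=\mathcal{P}_1-\lambda\mathcal{P}_0$, and then the recursion \eqref{rec-eta} reduces the right-hand side to $\sigma_n'$. However, you overcomplicate the second step and invoke irrelevant structure. Since $C_n=-\sum_{i\ge 0}\sigma_{n+i}\lambda^{-i-1}$, applying $\mathcal{P}_\lambda$ term by term and using $\mathcal{P}_1\sigma_{n+i}=\mathcal{P}_0\sigma_{n+i+1}$ makes the series telescope \emph{identically} to $\mathcal{P}_0\sigma_n=\sigma_n'$: all $\lambda$-dependence cancels on the nose, there are no boundary terms, the $t_n$-flows and the $B_n$ play no role whatsoever, and neither the coefficient-extraction nor the alternative induction is needed. (Also, a sign: $(\mathcal{P}_1-\lambda\mathcal{P}_0)c(\lambda)=+\sigma_0'$, not $-\sigma_0'$; this is exactly the equation \eqref{eq-c} used later in the paper.)
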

\begin{proof}
From the compatibility condition $(\Psi^{\prime\prime})_{\tau_n} = (\Psi_{\tau_n})^{\prime\prime}$ 
we arrive at the equation
\begin{equation}
\label{eq-eta}
\diff{u}{\tau_n} = \mathcal{P}_\lambda C_n,
\end{equation}
where 
\[\mathcal{P}_\lambda=\mathcal{P}_1-\lambda \mathcal{P}_0.\]
By using the equation \eqref{rec-eta}, we see that the right hand side of \eqref{eq-eta} equals $\qs_n^\prime$. The proposition is proved.
\end{proof}
\begin{Lem}
\label{c-tau0}
The function $c(\lambda)$ defined in \eqref{gen-c}
satisfies the following equation:
\begin{equation*}
\diff{c(\lambda)}{\tau_0} = \frac{1}{2}c(\lambda)c(\lambda)^\prime.
\end{equation*}
\end{Lem}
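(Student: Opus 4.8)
The plan is to derive the evolution of $c(\lambda)$ along $\tau_0$ directly from the super Lax equation $\Psi_{\tau_0}=\frac12 C_0\Psi'-\frac14 C_0'\Psi$, in exact parallel with the way the ordinary KdV flows are extracted from their Lax pair. First I would note that from \eqref{gen-c} we have $C_0=(c(\lambda))_-=c(\lambda)$, since $c(\lambda)$ is already a Laurent series in $\lambda^{-1}$ with no non-negative part; thus the $\tau_0$-flow is simply $\Psi_{\tau_0}=\frac12 c\Psi'-\frac14 c'\Psi$. The compatibility condition $(L\Psi)_{\tau_0}=0$, i.e. $\left(\frac{\qe^2}{2}\partial_x^2+u-\lambda\right)\Psi_{\tau_0}+u_{\tau_0}\Psi=0$, combined with $L\Psi=0$, yields an equation purely in terms of $u$, $c(\lambda)$ and their $x$-derivatives; this is the odd analogue of \eqref{eq-eta}, and by Proposition \ref{u-taun} its $u_{\tau_0}$ term equals $\qs_0'=\theta'$.

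Next I would substitute the explicit form of $u_{\tau_0}$ and use the defining relation \eqref{rec-eta} for $n=0$, namely $\mathcal P_0\qs_1=\mathcal P_1\qs_0$, together with $\mathcal P_0=\partial_x$, to rewrite everything. The key structural fact is that $c(\lambda)$ satisfies the generating-series form of the recursion \eqref{rec-eta}: multiplying \eqref{rec-eta} by $\lambda^{-n-1}$ and summing gives $\mathcal P_\lambda c(\lambda)=-\mathcal P_0\qs_0=-\theta'=-\qs_0'$ (up to the usual boundary term, which vanishes by the choice of integration constants). I would then plug this into the compatibility equation; the $\lambda$-dependent pieces coming from $L$ acting on $c\Psi'-\frac12 c'\Psi$ should reorganize, after using $L\Psi=0$ to eliminate $\Psi''$ in favour of $\frac{2}{\qe^2}(\lambda-u)\Psi$, into precisely the statement $c_{\tau_0}=\frac12 c c'$.

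The main obstacle I anticipate is the bookkeeping in the compatibility computation: expanding $\left(\frac{\qe^2}{2}\partial_x^2+u-\lambda\right)\!\left(\frac12 c\Psi'-\frac14 c'\Psi\right)$ produces terms with $\Psi$, $\Psi'$, and $\Psi''$, and one must repeatedly use $L\Psi=0$ (hence $\Psi''=\frac{2}{\qe^2}(\lambda-u)\Psi$ and, differentiating, $\Psi'''=\frac{2}{\qe^2}(\lambda-u)\Psi'-\frac{2}{\qe^2}u'\Psi$) to reduce to a combination of $\Psi$ and $\Psi'$ only. Requiring the coefficients of $\Psi$ and of $\Psi'$ to vanish separately gives two scalar identities; one of them is automatically satisfied (it encodes $u_{\tau_0}=\qs_0'$ again), and the other, after dividing by an overall factor and using $\mathcal P_\lambda c=-\qs_0'$, collapses to $c_{\tau_0}=\frac12 c c'$. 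A secondary subtlety is the parity/sign accounting: since $c(\lambda)$ is odd, one has $c c'=-c' c$ and care is needed so that the Leibniz rule and the anticommutativity of the $\qs_k$ are applied consistently; but no genuinely new idea beyond Proposition \ref{u-taun} and the generating-function recursion is needed, so the proof should be short once the substitutions are organized.
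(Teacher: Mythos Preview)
There is a genuine gap in your plan: the compatibility condition you invoke, $(L\Psi)_{\tau_0}=0$, does not involve $c_{\tau_0}$ at all and therefore cannot determine it. If you carry out your own computation---expanding $L\bigl(\tfrac12 c\Psi'-\tfrac14 c'\Psi\bigr)+u_{\tau_0}\Psi$ and reducing $\Psi''$, $\Psi'''$ via $L\Psi=0$---you will find that the $\Psi'$-coefficient cancels identically and the $\Psi$-coefficient gives exactly $u_{\tau_0}=\mathcal P_\lambda c$, i.e.\ equation \eqref{eq-eta} for $n=0$. Nothing else survives; in particular no time derivative of $c$ appears, because $L$ does not depend on $c$. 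So your ``two scalar identities, one automatic and one giving $c_{\tau_0}=\tfrac12 cc'$'' never materialize. (As a side remark, your generating-series identity has the wrong sign: summing \eqref{rec-eta} against $\lambda^{-n-1}$ with the definition \eqref{gen-c} gives $\mathcal P_\lambda c(\lambda)=+\qs_0'$, cf.\ \eqref{eq-c}.)

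What the paper actually does is differentiate the recursion identity $\mathcal P_\lambda c(\lambda)=\qs_0'$ itself with respect to $\tau_0$. This \emph{does} bring in $c_{\tau_0}$, together with $u_{\tau_0}c'+\tfrac12 u_{\tau_0}'c$ coming from the $u$-dependence of $\mathcal P_\lambda$; using Proposition~\ref{u-taun} and $\partial_{\tau_0}\qs_0'=0$ one obtains $\mathcal P_\lambda(c_{\tau_0})=-\qs_0'c'-\tfrac12\qs_0''c$. One then checks, using $cc=0$ and \eqref{eq-c} once more, that $\mathcal P_\lambda(\tfrac12 cc')$ equals the same right-hand side, so $Z:=c_{\tau_0}-\tfrac12 cc'$ lies in $\ker\mathcal P_\lambda$. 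Expanding $Z=\sum z_n\lambda^{-n-1}$ and reading off the coefficient of $\lambda^{-n}$ in $\mathcal P_\lambda Z=0$ gives $z_n'=(\text{terms in }z_0,\dots,z_{n-1})$, forcing $z_n=0$ inductively since there are no nonzero odd constants. This last kernel argument is the step your outline is missing; without it one cannot pass from any compatibility relation to the pointwise statement $c_{\tau_0}=\tfrac12 cc'$.
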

\begin{proof}
Since $C_0 = c(\lambda)$, the equation \eqref{eq-eta} with $n = 0$ leads to the relation
\begin{equation}
\label{eq-c}
\mathcal{P}_\lambda c(\lambda) = \qs_0^\prime.
\end{equation}
By applying $\diff{}{\tau_0}$ to both sides of \eqref{eq-c}, and by using the equation \eqref{u-tau0} and the fact that
$\diff{\qs_0^\prime}{\qt_0} =\frac{\partial^2 u}{\partial \tau_0^2} = 0$ we arrive at the equation
\begin{equation}
\label{temp1}
\mathcal{P}_\lambda \left(\diff{c(\lambda)}{\tau_0}\right)= -\qs_0^\prime c(\lambda)^\prime-\frac{1}{2}\qs_0^{\prime\prime}c(\lambda).
\end{equation}

Denote $Z=\diff{c(\lambda)}{\tau_0}-\frac{1}{2}c(\lambda)c(\lambda)^\prime$, then $Z$ is a power series in $1/\lambda$. We need to show that $Z = 0$. To this end, we rewrite the left hand side of \eqref{temp1} in the form
\begin{align}
\text{L.H.S.} =
&\frac{1}{2}c(\lambda)\left(\frac{3}{2}u^\prime(x)c(\lambda)^\prime+(u(x)-\lambda)c(\lambda)^{\prime\prime}+\frac{\vare^2}{8}c(\lambda)^{(4)}\right)\notag\\
&+\mathcal{P}_\lambda Z -\qs_0^\prime c(\lambda)^\prime.\label{eq-c-t0}
\end{align}
By using the equation \eqref{eq-c} and the fact that $c(\lambda)c(\lambda) = 0$ we obtain
\begin{equation*}
\frac{1}{2}c(\lambda)\left(\frac{3}{2}u^\prime(x)c(\lambda)^\prime+(u(x)-\lambda)c(\lambda)^{\prime\prime}+\frac{\vare^2}{8}c(\lambda)^{(4)}\right) = -\frac{1}{2}\qs_0'' c(\lambda).
\end{equation*}
Thus from the equations \eqref{temp1}, \eqref{eq-c-t0}
we deduce the identity
\begin{equation*}
\mathcal{P}_\lambda Z = 0.
\end{equation*}
Now by expanding $Z = \sum_{n\geq 0}z_n\lambda^{-n-1}$, we can show by induction that $z_n = 0$. Here we use the same argument as the one we used to prove Lemma \ref{Z-lem}. The lemma is proved.
\end{proof}

Now let us determine the evolutions of the super variables $\qs_m$ along $\tau_n$. Note that the compatibility conditions of the linear systems given in \eqref{def-superKdV} fix these evolutionary equations up to some constants, in what follows we take these constants to be zero.

\begin{Lem}
\label{eta0-taun}
The super variables $\qs_k$ satisfy the following equations:
\begin{equation}
\label{temp2}
\diff{\qs_0}{\tau_n} = \frac{1}{2}\sum_{i=0}^{n-1}\qs_i\qs_{n-i-1}^\prime.
\end{equation}
\end{Lem}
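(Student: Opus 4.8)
The goal is to determine the flow $\diff{\qs_0}{\qt_n}$ from the compatibility of the $t_n$- and $\qt_n$-evolutions in \eqref{def-superKdV}, so the natural approach mirrors the proof of Lemma \ref{c-tau0}: translate everything into an identity for the generating series $c(\ql)$, use the Lax-type equation \eqref{eq-c} to kill the inhomogeneous terms, and then invoke the injectivity of $\mathcal{P}_\ql$ on power series in $1/\ql$ (the argument already used in Lemma \ref{Z-lem}) to conclude. First I would compute the compatibility condition $(\Psi_{\qt_n})' \text{-type}$ relation, i.e. $\frac{\qp}{\qp\qt_n}\diff{\Psi}{\qt_0} = \frac{\qp}{\qp\qt_0}\diff{\Psi}{\qt_n}$, or equivalently work directly from the second-order ODE $L\Psi=0$ together with the $\qt_n$-flow; this yields an equation of the form $\mathcal{P}_\ql\bigl(\diff{c(\ql)}{\qt_n}\bigr) = (\text{bilinear expression in }C_n, c(\ql)\text{ and their derivatives})$, analogous to \eqref{temp1}.

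The key step is then to guess the closed form of $\diff{c(\ql)}{\qt_n}$. Motivated by the super extension of the principal hierarchy (Theorem \ref{super-1}, with $\Qg^{\qg\qb}_\qa = \frac12$ in the one-dimensional case) and by Lemma \ref{c-tau0}, I expect the answer to be
\[
\diff{c(\ql)}{\qt_n} = \frac12\Bigl(C_n c(\ql)' + c(\ql) C_n' - (\ql^n c(\ql) c(\ql)')_-\Bigr),
\]
which is exactly the generating-function form \eqref{zh-8} of the flows \eqref{seg-tau-p} specialized to $n=1$. Extracting the coefficient of $\ql^{-1}$ from this identity (using $c(\ql) = -\sum \qs_m \ql^{-m-1}$, $C_n = (\ql^n c(\ql))_-$) should produce precisely $\diff{\qs_0}{\qt_n} = \frac12\sum_{i=0}^{n-1}\qs_i\qs_{n-i-1}'$, which is \eqref{temp2}. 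So the plan is: (i) set $Z(\ql) := \diff{c(\ql)}{\qt_n} - \frac12\bigl(C_n c(\ql)' + c(\ql) C_n' - (\ql^n c(\ql)c(\ql)')_-\bigr)$; (ii) substitute into the compatibility equation and show $\mathcal{P}_\ql Z = 0$, using \eqref{eq-c}, the nilpotency $c(\ql)c(\ql) = 0$, and the recursion \eqref{rec-eta}; (iii) expand $Z = \sum_{m\ge 0} z_m \ql^{-m-1}$ and argue inductively, exactly as in Lemma \ref{Z-lem} (the coefficient of $\ql^0$ forces $z_0' = 0$ hence $z_0 = 0$ since there are no nonzero odd constants, and then the coefficients of $\ql^{-m}$ give $z_m = 0$ by induction); (iv) read off the $\ql^{-1}$-coefficient of the resulting identity $\diff{c(\ql)}{\qt_n} = \frac12(\cdots)$ to obtain \eqref{temp2}.

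The main obstacle I anticipate is step (ii): the "lengthy calculation" of showing $\mathcal{P}_\ql Z = 0$. The delicate points are keeping track of which projections $(\cdot)_-$ and $(\cdot)_+$ appear when one differentiates $C_n = (\ql^n c(\ql))_-$ along $\qt_0$ (one needs $\diff{C_n}{\qt_0}$ in terms of $\diff{c(\ql)}{\qt_0} = \frac12 c(\ql)c(\ql)'$ from Lemma \ref{c-tau0}, and the splitting into $\pm$ parts does not commute naively with the quadratic flow), and correctly using $c(\ql)c(\ql)=0$ together with $\mathcal{P}_\ql c(\ql) = \qs_0'$ to cancel the inhomogeneous terms — these are the same manipulations that occur in \eqref{eq-c-t0}, now with the extra combinatorial bookkeeping coming from the $\ql^n$ factor. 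Once the inhomogeneous and $C_n$-dependent terms are shown to cancel, leaving $\mathcal{P}_\ql Z = 0$, the rest is routine. An alternative, possibly cleaner, route for step (ii) is to avoid re-deriving the compatibility relation from $\Psi$ and instead verify directly that the proposed $\diff{\qs_0}{\qt_n}$ is consistent with \eqref{rec-eta} and with the already-established $t_n$-flows via the commutativity $[\diff{}{\qt_n},\diff{}{t_k}]\qs_0 = 0$ and $[\diff{}{\qt_n},\diff{}{\qt_m}]\qs_0 = 0$, reducing to the $n=1$ case handled by Lemma \ref{c-tau0}; but since the statement is phrased in terms of the Lax pair, I expect the authors to take the generating-series route above.
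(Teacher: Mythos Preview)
Your plan contains a circularity that prevents step~(iii) from starting. Differentiating \eqref{eq-c} along $\tau_n$ gives
\[
\qs_n'\,c(\lambda)' + \tfrac12\qs_n''\,c(\lambda) + \mathcal{P}_\lambda\!\left(\diff{c(\lambda)}{\tau_n}\right) \;=\; \diff{\qs_0'}{\tau_n},
\]
and the right-hand side is exactly the unknown you are after. Substituting your ansatz and simplifying (precisely the computation in the paper's proof of Proposition~\ref{prop-tau}) does \emph{not} yield $\mathcal{P}_\lambda Z=0$; it yields
\[
\mathcal{P}_\lambda Z \;=\; \diff{\qs_0'}{\tau_n}-\tfrac12\,\partial_x\!\sum_{i=0}^{n-1}\qs_i\qs_{n-i-1}'.
\]
Since by construction $z_0=-\diff{\qs_0}{\tau_n}+\tfrac12\sum_{i=0}^{n-1}\qs_i\qs_{n-1-i}'$, the $\lambda^0$-coefficient of this equation is a tautology, not the relation $z_0'=0$ you claim; your induction never begins. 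In fact the paper proves Proposition~\ref{prop-tau} \emph{after} Lemma~\ref{eta0-taun} and explicitly invokes Lemma~\ref{eta0-taun} to identify the right-hand side above with zero, so your route inverts the logical order.

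The paper breaks this circularity by a different device: the diagonal generating operator $\diff{}{\tau(\lambda)}=\sum_{n\ge 0}\lambda^{-n-1}\diff{}{\tau_n}$. From $\diff{u}{\tau(\lambda)}=-c(\lambda)'$ one gets $\bigl(\diff{c(\lambda)}{\tau(\lambda)}\bigr)'=0$, and after fixing the integration constant to zero, $\diff{c(\lambda)}{\tau(\lambda)}=0$. Applying $\diff{}{\tau(\lambda)}$ to \eqref{eq-c} then kills the $\mathcal{P}_\lambda$-term entirely, leaving $\tfrac12 c(\lambda)c(\lambda)''=\diff{\qs_0'}{\tau(\lambda)}$, which integrates to $\diff{\qs_0}{\tau(\lambda)}=\tfrac12 c(\lambda)c(\lambda)'$, i.e.\ \eqref{temp2}. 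Working at coincident spectral parameters is what removes the unknown $\diff{\qs_0'}{\tau_n}$ from the equation.
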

\begin{proof}
Introduce the operator:
\begin{equation}
\label{taumu}
\diff{}{\tau(\mu)} = \sum_{n\geq 0}\frac{1}{\mu^{n+1}}\diff{}{\tau_n}.
\end{equation}
From \eqref{u-tau0} it follows that $\diff{u}{\tau(\lambda)} = -c(\lambda)^\prime$, so we also have $\left(\diff{c(\lambda)}{\tau(\lambda)} \right)'= 0$. We can take the integration constant to be zero, and obtain the equation
$\diff{c(\lambda)}{\tau(\lambda)}=0$.
By applying $\diff{}{\tau(\lambda)}$ to both sides of the equation \eqref{eq-c}, we arrive at
\begin{equation*}
\frac{1}{2}c(\lambda)c(\lambda)^{\prime\prime} = \diff{\qs_0^\prime}{\tau(\lambda)}.
\end{equation*}
By taking the integration constant to be zero again we obtain
\begin{equation*}
\diff{\qs_0}{\tau(\lambda)} = \frac{1}{2}c(\lambda)c(\lambda)^{\prime},
\end{equation*}
which is just a reformulation of \eqref{temp2}. The lemma is proved.
\end{proof}

\begin{Prop}
\label{prop-tau}
The generating function satisfies the following evolutionary equations:
\begin{equation*}
\frac{\partial c(\lambda)}{\partial \tau_n} = \frac{1}{2}(C_nc(\lambda)^\prime+c(\lambda)C_n^\prime-(\lambda^nc(\lambda)c(\lambda)^\prime)_-).
\end{equation*}
\end{Prop}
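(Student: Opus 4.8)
The plan is to imitate the proof of Lemma \ref{c-tau0}, which is exactly the case $n=0$. First I would differentiate the defining relation \eqref{eq-c}, $\mathcal P_\lambda c(\lambda)=\qs_0'$, along the odd flow $\partial/\partial\tau_n$. Since $\partial/\partial\tau_n$ is an odd evolutionary derivation commuting with $\qp_x$, and only the coefficients $u,u'$ of $\mathcal P_\lambda=(u-\lambda)\qp_x+\frac12 u'+\frac{\qe^2}8\qp_x^3$ depend on $\tau_n$, the Leibniz rule together with Proposition \ref{u-taun} ($\partial u/\partial\tau_n=\qs_n'$) and Lemma \ref{eta0-taun} ($\partial\qs_0/\partial\tau_n=\frac12\sum_{i=0}^{n-1}\qs_i\qs_{n-i-1}'$) yields
\[
\mathcal P_\lambda\!\left(\diff{c(\lambda)}{\tau_n}\right)=\frac12\,\qp_x\!\left(\sum_{i=0}^{n-1}\qs_i\qs_{n-i-1}'\right)-\qs_n'\,c(\lambda)'-\frac12\qs_n''\,c(\lambda),
\]
which for $n=0$ is equation \eqref{temp1}. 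Thus $\partial c(\lambda)/\partial\tau_n$ solves an inhomogeneous linear equation whose left side is $\mathcal P_\lambda$ and whose right side is completely explicit.

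The core step is to check that $W_n:=\frac12\big(C_n c(\lambda)'+c(\lambda)C_n'-(\lambda^n c(\lambda)c(\lambda)')_-\big)$ satisfies the same equation, i.e.\ that $\mathcal P_\lambda W_n$ equals the right side above. Expanding $\mathcal P_\lambda W_n$ by the Leibniz rule and regrouping, one uses: relation \eqref{eq-c} together with the identity $\mathcal P_\lambda C_n=\partial u/\partial\tau_n=\qs_n'$ established in the proof of Proposition \ref{u-taun} (a consequence of \eqref{eq-eta} and \eqref{rec-eta}); the splitting $\lambda^n c(\lambda)=C_n+(\lambda^n c(\lambda))_+$ whose positive part is polynomial in $\lambda$; the nilpotency $c(\lambda)c(\lambda)=C_nC_n=0$ and its consequences, such as $(c(\lambda)c(\lambda)')'=c(\lambda)c(\lambda)''$; and the formal skew-adjointness $\mathcal P_\lambda^*=-\mathcal P_\lambda$, which converts a term $a\,\mathcal P_\lambda b$ into $-(\mathcal P_\lambda a)\,b$ modulo a total $x$-derivative. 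One must also commute $\mathcal P_\lambda$ past the Laurent projection $(\cdot)_-$; since $\mathcal P_\lambda=\mathcal P_1-\lambda\qp_x$ and multiplication by $\lambda$ raises the $\lambda$-degree, this produces only a controllable correction, namely the coefficient pushed out of the negative part, which has to be bookkept. This is the analogue of the ``lengthy calculation'' in the proof of Lemma \ref{Z-lem} and of the Frobenius-manifold computation around \eqref{zh-8}--\eqref{gen-eq}, and it is the main technical obstacle; the delicate points are the signs generated when the odd factors $c(\lambda),C_n$ are permuted and the tracking of which $\lambda$-powers survive each projection. (Equivalently, $W_n$ can be read off from the zero-curvature condition relating the $\tau_0$- and $\tau_n$-linear systems in \eqref{def-superKdV}, in which $\partial C_n/\partial\tau_0=\frac12(\lambda^n c(\lambda)c(\lambda)')_-$ by Lemma \ref{c-tau0}.)

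Finally, put $Z:=\partial c(\lambda)/\partial\tau_n-W_n$; the previous two steps give $\mathcal P_\lambda Z=0$. Writing $Z=\sum_{k\ge 0}z_k\lambda^{-k-1}$, the coefficient of $\lambda^0$ forces $z_0'=0$ and the coefficient of $\lambda^{-k-1}$ gives $z_{k+1}'=\mathcal P_1 z_k$. Since the $\tau_n$-evolutions of the super variables are fixed by the Lax pair \eqref{def-superKdV} only up to additive constants, which have been normalized to zero, and $W_n$ carries no constant terms, the induction used in the proof of Lemma \ref{Z-lem} (equivalently, at the end of the proof of Lemma \ref{c-tau0}) gives $z_k=0$ for all $k$. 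Hence $\partial c(\lambda)/\partial\tau_n=W_n$, which is the asserted identity.
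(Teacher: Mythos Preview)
Your proposal is correct and matches the paper's proof. The paper organizes the core computation slightly differently---it isolates $-\tfrac12\,\mathcal P_\lambda\big((\lambda^n c c')_-\big)$, commutes $\mathcal P_\lambda$ past the projection (the correction being exactly $\tfrac12\,\partial_x\sum_{i}\qs_i\qs_{n-i-1}'$ via Lemma~\ref{eta0-taun}) and then reuses the $n=0$ identity $\mathcal P_\lambda(cc')=-2\qs_0'c'-\qs_0''c$ obtained by substituting Lemma~\ref{c-tau0} back into \eqref{temp1}---but the ingredients and the concluding $\mathcal P_\lambda Z=0$ induction are the same as yours.
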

\begin{proof}
The idea to prove this lemma is the same as the one that is used in the proof of Lemma \ref{c-tau0}. Denote
\[Z=\frac{\partial c(\lambda)}{\partial \tau_n} -\frac{1}{2}(C_nc(\lambda)^\prime+c(\lambda)C_n^\prime-(\lambda^nc(\lambda)c(\lambda)^\prime)_-),\]
then $Z$ is formal power series in $1/\lambda$. By applying $\diff{}{\tau_n}$ to both sides of the equation \eqref{eq-c} we obtain
\begin{equation}
\label{temp33}
C_n^\prime\qs_0^\prime+\frac{1}{2}C_n\qs_0^{\prime\prime}+\mathcal{P}_{\lambda}\left(Z-\frac{1}{2}(\lambda^nc(\lambda)c(\lambda)^\prime)_-\right) = \diff{\qs_0^\prime}{\tau_n}.
\end{equation}
From the definition of $c(\lambda)$ it follows that
\begin{equation*}
\lambda(\lambda^nc(\lambda)c(\lambda)^\prime)_--(\lambda^{n+1}c(\lambda)c(\lambda)^\prime)_- = \sum_{i=0}^{n-1}\qs_i\qs_{n-i-1}^\prime.
\end{equation*}
So by using Lemma \ref{eta0-taun} we have
\begin{align*}
&-\frac{1}{2}\mathcal{P}_{\lambda}(\lambda^nc(\lambda)c(\lambda)^\prime)_-\\ 
=& -\left(\frac{\lambda^n}{2}\mathcal{P}_1 \left(c(\lambda)c(\lambda)^\prime\right)\right)_-+\frac{\lambda}{2}\partial_x(\lambda^nc(\lambda)c(\lambda)^\prime)_-\\
=&-\left(\frac{\lambda^n}{2}\mathcal{P}_1 \left(c(\lambda)c(\lambda)^\prime\right)\right)_-+\frac{1}{2}\partial_x(\lambda^{n+1}c(\lambda)c(\lambda)^\prime)+\frac{1}{2}\partial_x\sum_{i=0}^{n-1}\qs_i\qs_{n-i-1}^\prime\\
=&-\left(\frac{\lambda^n}{2}\mathcal{P}_{\lambda} \left(c(\lambda)c(\lambda)^\prime\right)\right)_-+\diff{\qs_0^\prime}{\tau_n}.
\end{align*}
On the other hand, by substituting the result of Lemma \ref{c-tau0} into \eqref{temp1} we arrive at the identity
\begin{equation*}
-\frac{1}{2}\mathcal{P}_{\lambda}\left(c(\lambda)c(\lambda)^\prime\right) = -c(\lambda)^\prime\qs_0^\prime-\frac{1}{2}c(\lambda)\qs_0^{\prime\prime}.
\end{equation*}
Now by substituting the above two relations into \eqref{temp33} we arrive at $\mathcal{P}_{\lambda}Z = 0$, which implies $Z = 0$. The Theorem is proved.
\end{proof}

The following proposition is a reformulation of Proposition \ref{prop-tau}.
\begin{Cor}\mbox{}
The generating function $c(\lambda)$ satisfies the equation
	\begin{equation}
	\label{c-tau}
	\diff{c(\lambda)}{\tau(\mu)} = \frac{c(\lambda)c(\lambda)^\prime+c(\mu)c(\mu)^\prime-c(\mu)c(\lambda)^\prime-c(\lambda)c(\mu)^\prime}{2(\mu-\lambda)},
	\end{equation}
which yields the following evolutionary equations satisfied by $\sigma_k$:
	\begin{equation*}
	\diff{\qs_k}{\tau_n} = \frac{1}{2}\sum_{i=0}^{n-k-1}\qs_{i+k}\qs_{n-i-1}^\prime,\quad \diff{\qs_n}{\tau_k} = -\diff{\qs_k}{\tau_n},\quad k\le n.
	\end{equation*}

\end{Cor}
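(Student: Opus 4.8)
The plan is to treat this corollary as exactly what the sentence preceding it says: a reformulation of Proposition \ref{prop-tau} obtained by passing to generating functions in the variable $\mu$. Concretely, I would apply $\sum_{n\geq 0}\mu^{-n-1}(\,\cdot\,)$ to the evolutionary equation of Proposition \ref{prop-tau}. By the definition \eqref{taumu} of $\diff{}{\tau(\mu)}$ the left-hand side becomes $\diff{c(\lambda)}{\tau(\mu)}$, so the whole task reduces to identifying the three sums $\sum_n C_n\mu^{-n-1}$, $\sum_n C_n'\mu^{-n-1}$ and $\sum_n(\lambda^n c(\lambda)c(\lambda)')_-\mu^{-n-1}$.

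The key elementary fact is the Laurent-series identity: for any $f(\lambda)=\sum_{k\geq 1}f_k\lambda^{-k}$ one has $(\lambda^n f(\lambda))_-=\sum_{k>n}f_k\lambda^{n-k}$, and hence, by summing the geometric series,
\[\sum_{n\geq 0}(\lambda^n f(\lambda))_-\mu^{-n-1}=\sum_{k\geq 1}f_k\,\frac{\lambda^{-k}-\mu^{-k}}{\mu-\lambda}=\frac{f(\lambda)-f(\mu)}{\mu-\lambda}.\]
Since both $c(\lambda)$ (so that $C_n=(\lambda^nc(\lambda))_-$) and $c(\lambda)c(\lambda)'$ are Laurent series in $1/\lambda$ with no nonnegative powers, this yields $\sum_n C_n\mu^{-n-1}=\dfrac{c(\lambda)-c(\mu)}{\mu-\lambda}$ and $\sum_n(\lambda^nc(\lambda)c(\lambda)')_-\mu^{-n-1}=\dfrac{c(\lambda)c(\lambda)'-c(\mu)c(\mu)'}{\mu-\lambda}$, while $\sum_n C_n'\mu^{-n-1}=\partial_x\big(\sum_n C_n\mu^{-n-1}\big)=\dfrac{c(\lambda)'-c(\mu)'}{\mu-\lambda}$. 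Substituting these into the summed form of Proposition \ref{prop-tau} and collecting everything over the common denominator $\mu-\lambda$, the numerator collapses to $c(\lambda)c(\lambda)'+c(\mu)c(\mu)'-c(\mu)c(\lambda)'-c(\lambda)c(\mu)'$ (equivalently, it factors as $(c(\lambda)-c(\mu))(c(\lambda)'-c(\mu)')$), which is precisely \eqref{c-tau}.

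Finally, to read off the equations for the $\qs_k$, I would expand both sides of \eqref{c-tau} in powers of $\lambda^{-1}$, using $C_n=-\sum_{j\geq 0}\qs_{n+j}\lambda^{-j-1}$ and $\diff{c(\lambda)}{\tau_n}=-\sum_k\diff{\qs_k}{\tau_n}\lambda^{-k-1}$, and match the coefficient of $\lambda^{-k-1}$. Reorganizing the resulting double sums—while keeping track of the anticommutativity of the super variables—the three contributions combine into $\diff{\qs_k}{\tau_n}=\tfrac12\sum_{i=0}^{n-k-1}\qs_{i+k}\qs_{n-i-1}'$ for $k\le n$, and the antisymmetry $\diff{\qs_n}{\tau_k}=-\diff{\qs_k}{\tau_n}$ follows from the manifest antisymmetry of the right-hand side of \eqref{c-tau} under $\lambda\leftrightarrow\mu$. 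Since Proposition \ref{prop-tau} already supplies all of the substantive content, the only real work is the bookkeeping in these Laurent-series manipulations; that index-shuffling is the step I expect to require the most care, although it presents no genuine difficulty.
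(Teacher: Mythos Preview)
Your proposal is correct and matches the paper's intent: the paper gives no proof at all for this corollary, stating only that it ``is a reformulation of Proposition \ref{prop-tau}'', and your argument supplies precisely the generating-function bookkeeping that justifies this claim. The key Laurent identity $\sum_{n\ge 0}(\lambda^n f(\lambda))_-\,\mu^{-n-1}=\dfrac{f(\lambda)-f(\mu)}{\mu-\lambda}$ and the subsequent algebraic simplification are exactly what is needed, and the antisymmetry argument for $\diff{\qs_n}{\tau_k}=-\diff{\qs_k}{\tau_n}$ via $\lambda\leftrightarrow\mu$ is the natural way to close.
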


It remains to derive the evolutions of the generating function $c(\lambda)$ along $t_n$. For this we need the following lemma.

\begin{Lem}
The following zero-curvature equation holds true:
\begin{equation}
\label{zero-cur}
\diff{C_m}{t_n}-\diff{B_n}{\tau_m} = \frac{1}{2}(B_nC_m^\prime-C_mB_n^\prime). 
\end{equation}
\end{Lem}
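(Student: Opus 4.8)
The plan is to derive the zero-curvature equation \eqref{zero-cur} from the compatibility of the three linear systems in \eqref{def-superKdV}, in exactly the same spirit as the classical zero-curvature relation between the $B_n$ flows is derived from the ordinary Lax pair. Concretely, I would first use the last two equations of \eqref{def-superKdV} to compute the mixed derivative $\diff{}{t_n}\diff{\Psi}{\tau_m}$ and $\diff{}{\tau_m}\diff{\Psi}{t_n}$. Writing $\Psi_{t_n}=\frac12 B_n\Psi'-\frac14 B_n'\Psi$ and $\Psi_{\tau_m}=\frac12 C_m\Psi'-\frac14 C_m'\Psi$, the two mixed derivatives expand into expressions involving $\Psi$, $\Psi'$, $\Psi''$, together with $t_n$- and $\tau_m$-derivatives of $B_n$, $C_m$, $u$. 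The second-order derivative $\Psi''$ is eliminated using the first equation $L\Psi=0$, i.e. $\Psi''=\frac{2}{\vare^2}(\lambda-u)\Psi$, and $\Psi'''$ likewise after differentiating that relation once; this is the standard reduction that makes the consistency condition a relation among coefficient functions rather than among the (algebraically independent) jets of $\Psi$.

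Next I would collect the coefficient of $\Psi'$ and the coefficient of $\Psi$ separately in the difference $\diff{}{t_n}\Psi_{\tau_m}-\diff{}{\tau_m}\Psi_{t_n}$. Setting this difference to zero (which is guaranteed by the already-established commutativity of the $t_n$ and $\tau_m$ flows on $\Psi$, or may simply be imposed as the compatibility requirement that defines the super extension) yields two scalar equations. The coefficient of $\Psi'$ gives precisely
\[
\diff{C_m}{t_n}-\diff{B_n}{\tau_m}=\frac12\bigl(B_nC_m'-C_mB_n'\bigr),
\]
while the coefficient of $\Psi$ gives the $x$-derivative of the same relation (this is the usual redundancy: the $\Psi$-coefficient equation is $\qp_x$ applied to the $\Psi'$-coefficient equation, up to terms that vanish on using $L\Psi=0$), so it carries no extra information. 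One should be slightly careful with signs coming from the odd parity of $C_m$ and of the super variables $\qs_n$ appearing in its coefficients: since each $C_m$ is odd, products such as $C_mB_n'$ and $B_nC_m'$ commute in the expected way only up to the bookkeeping of the Koszul sign, but because $B_n$ is even these particular cross terms behave exactly as in the even case, and the bracket $B_nC_m'-C_mB_n'$ is well-defined with the stated sign.

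The main obstacle I anticipate is purely organizational rather than conceptual: one must verify that all the $\lambda$-dependence is consistent, i.e. that $\diff{C_m}{t_n}-\diff{B_n}{\tau_m}$ is indeed a Laurent polynomial in $\lambda$ of the right type so that the identity makes sense termwise, and that the integration constants one is free to choose (as remarked before Lemma \ref{eta0-taun}) have been fixed compatibly with the normalization of $B_n$ and $C_n$ in \eqref{gen-c} and in the definition of $b(\lambda)$. This amounts to checking the leading and trailing orders in $\lambda$ on both sides, using $B_n=\frac{1}{\Gamma(n+3/2)}(\lambda^{n+1/2}b(\lambda))_+$ and $C_n=(\lambda^n c(\lambda))_-$; the positive/negative truncations interact in the standard Gelfand–Dickey way, and the cross terms $B_nC_m'-C_mB_n'$ supply exactly the correction needed to turn the naive mixed-derivative discrepancy into an honest identity. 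Once the orders match, the identity \eqref{zero-cur} follows, and it is then available to extract the evolution of $c(\lambda)$ along $t_n$ by the same truncation argument used in Proposition \ref{prop-tau}.
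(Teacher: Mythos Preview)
Your proposal is correct and follows exactly the approach the paper takes: the paper's proof is the single sentence ``This is just the compatibility condition $(\Psi_{\tau_m})_{t_n} = (\Psi_{t_n})_{\tau_m}$ of the linear systems given in \eqref{def-superKdV},'' and your expansion simply spells out the standard coefficient-matching computation behind that sentence. The additional remarks about $\lambda$-orders and sign bookkeeping are reasonable sanity checks but not needed for the argument itself.
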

\begin{proof}
This is just the compatibility condition $(\Psi_{\tau_m})_{t_n} = (\Psi_{t_n})_{\tau_m}$ of the linear systems given in  \eqref{def-superKdV}.
\end{proof}
\begin{Prop}\mbox{}
\begin{enumerate}
	\item The super variables $\qs_k$ satisfy the following
	evolutionary equations:
	\begin{equation*}
	\diff{\qs_k}{t_n} = \frac{1}{2}\sum_{i=0}^n\frac{\Gamma(n+\frac{1}{2}-i)}{\Gamma(n+\frac{3}{2})}(R_{n-i}\qs_{k+i}^\prime-R_{n-i}^\prime\qs_{k+i}).
	\end{equation*}
	\item The above equations can be represented in the form
	\begin{equation}
	\label{c-t}
	\diff{c(\lambda)}{t(\mu)} = \left(\frac{b(\mu)c(\lambda)^\prime-c(\lambda)b(\mu)^\prime}{2(\mu-\lambda)}\right)_{\frac{\lambda}{\mu},\lambda_-},
	\end{equation}
where
	\begin{equation}
	\label{tmu}
	\diff{}{t(\mu)} = \sum_{n\geq 0}\frac{\Qg(n+\frac{3}{2})}{\mu^{n+\frac{2}{3}}}\diff{}{t_n},
	\end{equation}
and the subscripts $(\ \, )_{\frac{\lambda}{\mu},\lambda_-}$mean that one first does the expansion
	\begin{equation*}\frac{1}{\mu-\lambda} = \frac{1}{\mu}\frac{1}{1-\frac{\lambda}{\mu}} = \frac{1}{\mu}\sum_{i\geq 0}\left(\frac{\lambda}{\mu}\right)^i,\end{equation*}
	and then take the negative part of the corresponding Laurent series in $\lambda$.
\end{enumerate}
\end{Prop}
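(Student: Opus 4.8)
The plan is to read off both statements from the zero-curvature equation \eqref{zero-cur} specialised to $m=0$. Since $c(\lambda)$ already contains only negative powers of $\lambda$, we have $C_0=(\lambda^0c(\lambda))_-=c(\lambda)$, and \eqref{zero-cur} at $m=0$ becomes
\[\diff{c(\lambda)}{t_n}-\diff{B_n}{\tau_0}=\frac12\bigl(B_nc(\lambda)'-c(\lambda)B_n'\bigr).\]
Here $B_n$ is a polynomial in $\lambda$ whose coefficients are differential polynomials in $u$, so $\diff{B_n}{\tau_0}$ — obtained by applying $\diff{}{\tau_0}$ to those coefficients via $\diff{u}{\tau_0}=\qs_0'$ (the $n=0$ case of \eqref{u-tau0}) — is again a polynomial in $\lambda$; on the other hand $\diff{c(\lambda)}{t_n}=-\sum_{k\ge0}\lambda^{-k-1}\diff{\qs_k}{t_n}$ contains only negative powers of $\lambda$, because the $\qs_k$ are defined by the $\lambda$-free recursion \eqref{rec-eta} and hence evolve by $\lambda$-independent differential polynomials. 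Comparing the parts of strictly negative $\lambda$-degree on the two sides therefore yields
\[\diff{c(\lambda)}{t_n}=\frac12\bigl(B_nc(\lambda)'-c(\lambda)B_n'\bigr)_-,\]
the nonnegative-degree parts contributing the (unneeded) identity $\diff{B_n}{\tau_0}=-\tfrac12\bigl(B_nc(\lambda)'-c(\lambda)B_n'\bigr)_+$.

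Part (1) then follows by extracting the coefficient of $\lambda^{-k-1}$. Writing $B_n=\frac{1}{\Gamma(n+\frac32)}\sum_{j=0}^n\Gamma(j+\tfrac12)R_j\lambda^{n-j}$ — from the definitions of $B_n$ and $b(\lambda)=\sum_j\Gamma(j+\tfrac12)\lambda^{-j-1/2}R_j$ — and $c(\lambda)=-\sum_i\lambda^{-i-1}\qs_i$, the coefficient of $\lambda^{-k-1}$ in $\tfrac12\bigl(B_nc(\lambda)'-c(\lambda)B_n'\bigr)$ works out to $-\tfrac12\sum_{i=0}^n\frac{\Gamma(n+\frac12-i)}{\Gamma(n+\frac32)}\bigl(R_{n-i}\qs_{k+i}'-R_{n-i}'\qs_{k+i}\bigr)$; since the coefficient of $\lambda^{-k-1}$ in $\diff{c(\lambda)}{t_n}$ is $-\diff{\qs_k}{t_n}$, this is precisely the formula of part (1). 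For part (2) I would apply $\sum_{n\ge0}\tfrac{\Gamma(n+3/2)}{\mu^{n+3/2}}(\cdot)$ to the displayed identity for $\diff{c(\lambda)}{t_n}$, using the definition \eqref{tmu} of $\diff{}{t(\mu)}$ and pulling the ($\mu$-independent) $\lambda$-projection through the sum. The only real computation is the resummation
\[\sum_{n\ge0}\frac{\Gamma(n+\tfrac32)}{\mu^{n+3/2}}B_n=\Bigl(\frac{b(\mu)}{\mu-\lambda}\Bigr)_{\frac{\lambda}{\mu}},\]
a reindexing obtained by interchanging the sums over $n$ and $j$; combined with $\bigl(\tfrac{b(\mu)}{\mu-\lambda}\bigr)'=\tfrac{b(\mu)'}{\mu-\lambda}$ (the $x$-derivative), it turns the summed identity into precisely \eqref{c-t}.

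Since everything here is bookkeeping, the main obstacle is modest: it is the care needed in the $\lambda$-splitting step — one must verify that $\diff{B_n}{\tau_0}$ is genuinely polynomial in $\lambda$, so that passing to strictly negative $\lambda$-degree really does isolate $\diff{c(\lambda)}{t_n}$ — together with the combinatorial precision (correct index ranges and $\Gamma$-factors) in the two reindexings, namely the coefficient extraction for part (1) and the resummation identity for part (2). As an alternative one could avoid \eqref{zero-cur} entirely and mimic the proof of Proposition \ref{prop-tau}: set $Z$ equal to the difference of the two sides of \eqref{c-t}, apply $\diff{}{t(\mu)}$ to the defining relation $\mathcal{P}_\lambda c(\lambda)=\qs_0'$ of \eqref{eq-c} — using $\diff{u}{t(\mu)}=b(\mu)'$ (which follows from $\diff{u}{t_n}=R_{n+1}'$ and $R_0=1$) together with the auxiliary flow $\diff{\qs_0}{t(\mu)}=\diff{b(\mu)}{\tau_0}$ coming from commutativity of the $t$- and $\tau_0$-flows — reduce to $\mathcal{P}_\lambda Z=0$, and conclude $Z=0$ by the no-nonzero-odd-constants induction of Lemma \ref{Z-lem}; this route is longer, as it requires keeping explicit track of the truncation remainders that the polynomial part of $B_n$ absorbs automatically in the zero-curvature approach.
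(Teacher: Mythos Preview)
Your proposal is correct and follows essentially the same route as the paper: both extract the evolution of $\qs_k$ from the zero-curvature equation \eqref{zero-cur} by discarding the polynomial-in-$\lambda$ term $\diff{B_n}{\tau_m}$ (the paper does this by taking the residue with general $m$, whereas you set $m=0$ and read off the general $\lambda^{-k-1}$-coefficient, which is the same computation up to the reindexing implicit in $C_m=(\lambda^m c(\lambda))_-$). The paper likewise regards part (2) as a direct reformulation of part (1), so your resummation argument is in the same spirit.
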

\begin{proof}
We only need to prove the first statement, since the second one is just a reformulation of the first one. By taking the residue of both sides of the zero-curvature equation \eqref{zero-cur} with respect to $\lambda$ we obtain
\begin{equation}
\label{temp3}
\diff{\qs_m}{t_n} = -\frac{1}{2}\mathrm{res}_{\lambda = 0}(B_nC_m^\prime-C_mB_n^\prime).
\end{equation}
The result of the proposition then follows from the expressions 
\begin{equation*}
B_n = \sum_{i=0}^n\frac{\Gamma(n+\frac{1}{2}-i)}{\Gamma(n+\frac{3}{2})}R_{n-i}\lambda^i,\quad C_m = \sum_{i\geq 0}-\qs_{m+i}\lambda^{-i-1}.
\end{equation*} 
of $B_n, C_m$. The proposition is proved.
\end{proof}

The above results lead to the following theorem on the KdV hierarchy and its super extension.
\begin{Th}
The KdV hierarchy and its super extension read
\begin{align}
\label{u-t}
\diff{u}{t_n} &= R_{n+1}^\prime,\\
\label{u-tau}
\diff{u}{\tau_n} &= \qs_n^\prime,\\
\label{seg-t}
\diff{\qs_k}{t_n} &= \frac{1}{2}\sum_{i=0}^n\frac{\Gamma(n+\frac{1}{2}-i)}{\Gamma(n+\frac{3}{2})}(R_{n-i}\qs_{k+i}^\prime-R_{n-i}^\prime\qs_{k+i}),\\
\label{seg-tau}
\diff{\qs_k}{\tau_n} &= -\diff{\qs_n}{\tau_k} = \frac{1}{2}\sum_{i=0}^{n-k-1}\qs_{i+k}\qs_{n-i-1}^\prime,\quad \text{for $k \leq n$},
\end{align}
where the differential polynomials $R_k$ are defined in \eqref{rec-R}.
\end{Th}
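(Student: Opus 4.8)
The plan is to read off the four families of equations \eqref{u-t}--\eqref{seg-tau} directly from the compatibility conditions of the super Lax pair \eqref{def-superKdV}, all of which have already been analysed in the preceding lemmas and propositions; at this stage the statement is essentially a packaging of those results, so the ``proof'' amounts to citing them in the right order and recording that the integration constants introduced along the way are set to zero, as was declared before Lemma \ref{eta0-taun}.

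Concretely, I would proceed as follows. Equation \eqref{u-t} is the classical KdV hierarchy, the compatibility condition of the first two lines of \eqref{def-superKdV}, which was recalled at the start of Section \ref{sec5} in the form $\diff{u}{t_n}=R_{n+1}^\prime$ together with the recursion \eqref{rec-R}. Equation \eqref{u-tau} is exactly Proposition \ref{u-taun}, obtained from the compatibility $(\Psi^{\prime\prime})_{\tau_n}=(\Psi_{\tau_n})^{\prime\prime}$ and the defining relation \eqref{rec-eta} of the super variables $\qs_n$. Equation \eqref{seg-t} is the first statement of the proposition immediately preceding the theorem: take the residue in $\ql$ of the zero-curvature equation \eqref{zero-cur} and substitute the explicit expansions of $B_n$ and $C_m$. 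Finally, \eqref{seg-tau} is the corollary reformulating Proposition \ref{prop-tau}; it also follows from Lemma \ref{eta0-taun} in the case $k=0$ and from the shift structure $C_n=(\ql^n c(\ql))_-$ of the generating function $c(\ql)$ for general $k$. In each case the residue computations produce $x$-derivatives of differential polynomials plus a constant, and the $\mathcal{P}_\ql Z=0\Rightarrow Z=0$ argument used in Lemmas \ref{Z-lem} and \ref{c-tau0} forces that constant (an odd element with vanishing $x$-derivative) to be zero.

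The only point that deserves a word of care --- and hence the modest ``main obstacle'' --- is the internal consistency of the four families, i.e. that the flows $\diff{}{t_n}$ and $\diff{}{\qt_n}$ defined by \eqref{u-t}--\eqref{seg-tau} genuinely commute and thus form a hierarchy. This is automatic because all of them are compatibility conditions of the single linear system \eqref{def-superKdV} with a common wave function $\Psi$, so no independent commutativity check is needed beyond the zero-curvature relations already established. I would close by observing that at $\qe=0$ the equations \eqref{u-t}--\eqref{seg-tau} reduce precisely to the super extension of the Riemann hierarchy written in Example \ref{kdv}, whose commutativity is Theorem \ref{super-1}; this confirms that the system above is indeed a dispersive deformation of the super tau-cover of the principal hierarchy of the one-dimensional Frobenius manifold.
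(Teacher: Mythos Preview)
Your identification of where the four displayed formulae come from is correct and matches the paper: \eqref{u-t} is classical KdV, \eqref{u-tau} is Proposition~\ref{u-taun}, \eqref{seg-t} is the residue computation immediately preceding the theorem, and \eqref{seg-tau} is the Corollary to Proposition~\ref{prop-tau}.  However, in the paper these formulae are already established \emph{before} the theorem; the actual content of the theorem (and of its proof) is the mutual commutativity of the flows, and this is precisely the step you wave away.

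Your claim that commutativity ``is automatic because all of them are compatibility conditions of the single linear system \eqref{def-superKdV}'' has a genuine gap.  The compatibility $(\Psi_{t_m})_{t_n}=(\Psi_{t_n})_{t_m}$ of the Lax pair involves only $B_m$ and $B_n$, hence only $u$; it is silent about $[\partial_{t_m},\partial_{t_n}]\qs_k$.  Likewise the $(\tau_m,\tau_n)$ zero-curvature relation for the $C$'s was never written down, and even if it were, the super variables $\qs_k$ are independent unknowns rather than functionals of $\Psi$, so the existence of a common wave function does not by itself force the flows on $\qs_k$ to commute.  The paper therefore checks commutativity directly.  For $[\partial_{t_m},\partial_{t_n}]\qs_k=0$ it uses a nontrivial generating-function trick: from the already established $[\partial_{t_m},\partial_{\tau_n}]u=0$, rewritten as $\partial_{t(\ql)}c(\mu)=-\partial_{\tau(\mu)}b(\ql)$, one gets
\[
\partial_{t(\mu_1)}\partial_{t(\mu_2)}c(\ql)=-\partial_{t(\mu_1)}\partial_{\tau(\ql)}b(\mu_2)=-\partial_{\tau(\ql)}\partial_{t(\mu_1)}b(\mu_2),
\]
which is symmetric in $\mu_1,\mu_2$ by the classical KdV commutativity on $u$.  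The relation $[\partial_{\tau_m},\partial_{\tau_n}]\qs_k=0$ is checked directly from \eqref{c-tau}, and $[\partial_{t_n},\partial_{\tau_m}]\qs_k=0$ is obtained by induction on $k$ using the recursion \eqref{rec-eta}.  None of this is covered by citing ``zero-curvature relations already established,'' since only \eqref{zero-cur} was stated and it concerns a single $(t,\tau)$ pair.  Your dispersionless-limit remark is a consistency check, not a proof of commutativity at positive $\qe$.
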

\begin{proof}
The commutation relations $[\diff{}{t_m},\diff{}{t_n}]u = 0$ follow
form the original KdV hierarchy. By using the equation \eqref{eq-eta}, \eqref{zero-cur} and the equation
\begin{equation*}
\diff{u}{t_m} =\mathcal{P}_\lambda B_m,
\end{equation*}
we can show that $[\diff{}{t_m},\diff{}{\qt_n}]u = 0$.
The commutation relations $[\diff{}{\qt_m},\diff{}{\qt_n}]u = 0$ can be obtained straightforwardly from \eqref{u-tau} and \eqref{seg-tau}.

To show the commutation relations $[\diff{}{t_m},\diff{}{t_n}]\qs_{k} = 0$, it suffices to show that $\diff{}{t(\mu_1)}\diff{c(\ql)}{t(\mu_2)}$ is symmetric w.r.t. $\mu_1$ and $\mu_2$. To prove this, we first note that the commutation relations $[\diff{}{t_m},\diff{}{\qt_n}]u = 0$ are equivalent to
\begin{equation*}
\diff{b(\ql)}{\qt(\mu)}+\diff{c(\mu)}{t(\ql)} = 0,
\end{equation*}
from which it follows that
\begin{equation*}
\diff{}{t(\mu_1)}\diff{c(\ql)}{t(\mu_2)} = -\diff{}{t(\mu_1)}\diff{b(\mu_2)}{\qt(\ql)} = -\diff{}{\qt(\ql)}\diff{b(\mu_1)}{t(\mu_2)}.
\end{equation*}
The right hand side of the above identity is symmetric w.r.t. $\mu_1$ and $\mu_2$, which follows from the commutation relations $[\diff{}{t_m},\diff{}{t_n}]u = 0$. So we arrive at  $[\diff{}{t_m},\diff{}{t_n}]\qs_{k} = 0$.

The commutation relations $[\diff{}{\qt_m},\diff{}{\qt_n}]\qs_{k} = 0$ can be checked directly by using \eqref{c-tau}. Finally, the relation $[\diff{}{\qt_m},\diff{}{t_n}]\qs_{k} = 0$ can be checked by induction on $k$ using the recursion relation \eqref{rec-eta}. The theorem is proved.
\end{proof}

To construct the super tau-cover of the KdV hierarchy, we first 
define the differential polynomials $\Qo_{n,k}$ with vanishing constant terms such that $\Qo_{n,k}^\prime = \diff{R_{n+1}}{t_k}$. Then by introducing the one-point functions $f_n$, we have the tau-cover of the KdV hierarchy:
\begin{equation*}
\diff{f_k}{t_n} = \Qo_{k,n},\quad \diff{u}{t_n} = R_{n+1}^\prime,\quad
k, n\ge 0.
\end{equation*}
In order to introduce the odd flows, we must show that $\diff{R_{k+1}}{\qt_n}$ are total $x$-derivatives of some differential polynomials (c.f.  discussions given in Sec.\,\ref{sec4}). Due to the identities $\diff{R_{k+1}}{\qt_n} = \diff{\qs_{n+1}}{t_k}$ and \eqref{c-t}, we only need to prove that $b(\mu)c(\ql)^\prime$ is a total $x$-derivative.
\begin{Lem}
\label{temp4-1}
\begin{equation*}
b(\mu)c(\ql)^\prime = \left[\frac{1}{\mu-\ql}\left(\pi\frac{c(\ql)}{b(\mu)}-\frac{\qe^2}{8}b(\mu)\left(b(\mu)\left(\frac{c(\ql)}{b(\mu)}\right)^\prime\right)^\prime\right)^\prime\right]_{\frac{\ql}{\mu},\ql_-},
\end{equation*}
\end{Lem}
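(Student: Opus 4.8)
The plan is to reduce everything to the two ordinary differential equations in $x$ satisfied by the generating series $b(\mu)$ and $c(\ql)$, and then to extract the asserted truncated identity. First I would record that the recursion \eqref{rec-R} together with $R_0=1$ forces $\mathcal{P}_\mu b(\mu)=0$, i.e.
\[\frac{\qe^2}{8}\,\partial_x^3 b(\mu)+(u-\mu)\,\partial_x b(\mu)+\tfrac12 u'\,b(\mu)=0,\]
which is the symmetric-square equation of the Schr\"odinger operator $L$; this is where the structure of the $B_n$ enters. Multiplying this relation by $b(\mu)$ and integrating once in $x$ yields the first integral
\[\frac{\qe^2}{4}\,b(\mu)\,\partial_x^2 b(\mu)-\frac{\qe^2}{8}\,(\partial_x b(\mu))^2+(u-\mu)\,b(\mu)^2=\mathrm{const},\]
and a short computation of the leading ($\mu^0$) coefficient, using $R_0=1$, shows the constant equals $-\Gamma(\tfrac12)^2=-\pi$. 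On the other side I would use equation \eqref{eq-c}, namely $\mathcal{P}_\ql c(\ql)=\qs_0'$.

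Next, writing $b=b(\mu)$ and $r=c(\ql)/b$, I would substitute $c(\ql)=r\,b$ into $\mathcal{P}_\ql c(\ql)=\qs_0'$. Expanding by the Leibniz rule, the terms proportional to $r$ assemble into $r\,\mathcal{P}_\ql b=(\mu-\ql)\,r\,b'$ by $\mathcal{P}_\mu b=0$, leaving
\[(\mu-\ql)\,r\,b'+(u-\ql)\,r'b+\frac{\qe^2}{8}\bigl(r'''b+3r''b'+3r'b''\bigr)=\qs_0'.\]
The key manipulation is the elementary identity
\[r'''b+3r''b'+3r'b''=\frac1b\,\partial_x\!\bigl[b\,(b\,r')'\bigr]+\frac{r'}{b}\bigl(2bb''-(b')^2\bigr),\]
after which the first integral permits the replacement $\tfrac{\qe^2}{8}\bigl(2bb''-(b')^2\bigr)=-\pi-(u-\mu)b^2$. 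Substituting this back, the $(u-\ql)$ and $(u-\mu)$ contributions combine into $(\mu-\ql)\,r'b$, the remaining $(\mu-\ql)$ terms reassemble into $(\mu-\ql)(rb)'=(\mu-\ql)c(\ql)'$, and multiplying through by $b$ gives the exact identity
\[(\mu-\ql)\,b(\mu)\,c(\ql)'=b(\mu)\,\qs_0'+\partial_x\!\left[\pi\,\frac{c(\ql)}{b(\mu)}-\frac{\qe^2}{8}\,b(\mu)\Bigl(b(\mu)\bigl(\tfrac{c(\ql)}{b(\mu)}\bigr)'\Bigr)'\right].\]

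Finally I would divide by $\mu-\ql$, expanding $\tfrac1{\mu-\ql}$ in powers of $\ql/\mu$, and apply the operation $[\ \cdot\ ]_{\frac{\ql}{\mu},\ql_-}$. The left-hand side $b(\mu)c(\ql)'$ contains only negative powers of $\ql$ (coming from $c(\ql)'$), hence is untouched by that operation; and since $\qs_0'$ carries no $\ql$, the term $\tfrac{b(\mu)\qs_0'}{\mu-\ql}$ expands into a series with only non-negative powers of $\ql$, whose negative part vanishes. What is left is precisely the asserted formula (and, because $\partial_x$ commutes with both the $\ql/\mu$-expansion and the truncation, this simultaneously exhibits $b(\mu)c(\ql)'$ as a total $x$-derivative, which is what the lemma is used for afterwards). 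I expect the only genuine obstacle to be the bookkeeping in the middle step --- recognising the combination $b(\mu)(b(\mu)r')'$ hidden in the $\qe^2$-term and invoking the first integral with the correct constant $-\pi$ --- while the concluding truncation argument and the derivation of $\mathcal{P}_\mu b(\mu)=0$ from \eqref{rec-R} are routine.
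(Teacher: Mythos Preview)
Your proof is correct, and it takes a somewhat different route from the paper's. The paper also starts from the first integral \eqref{eq-b}, but then proceeds by applying a time derivative (the generating operator \eqref{tmu}) to both sides of \eqref{eq-b}; this brings in the already--established evolution \eqref{c-t} for $\partial c(\lambda)/\partial t(\mu)$ (equivalently, $\partial b(\mu)/\partial\tau(\lambda)$), after which multiplying by $b$ and re-using \eqref{eq-b} produces the intermediate relation \eqref{bc-prime}, from which the lemma is read off via the operator $\mathscr D(\mu)$.

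Your argument is more self-contained: you work entirely with the two spatial ODEs $\mathcal P_\mu b(\mu)=0$ and $\mathcal P_\lambda c(\lambda)=\sigma_0'$ and never invoke the time evolutions. The substitution $c(\lambda)=r\,b(\mu)$, together with the algebraic identity for $r'''b+3r''b'+3r'b''$ and the first integral, yields an \emph{exact} (untruncated) identity with the extra term $b(\mu)\sigma_0'$, which then drops out under $[\,\cdot\,]_{\lambda/\mu,\lambda_-}$. This is arguably cleaner, since it isolates the purely ODE content of the lemma and makes the appearance of the total $x$-derivative completely transparent; the paper's route, by contrast, recycles the Lax-pair compatibility that was built up earlier in the section. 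Both approaches hinge on the same first integral with constant $-\pi$, and you identify that constant correctly.
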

\begin{proof}
It is easy to derive the following equation from \eqref{rec-R}:
\begin{equation}
\label{eq-b}
(u(x)-\ql)b(\ql)^2+\frac{\qe^2}{8}(2b(\ql)b(\ql)^{\prime\prime}-(b(\ql)^\prime)^2) = -\pi.
\end{equation}
We first apply the operator \eqref{tmu} to both sides of \eqref{eq-b}, then multiply the resulting equation by $b(\ql)$ and use \eqref{eq-b} 
again to obtain the following relation:
\begin{equation}\label{bc-prime}
b(\mu)c(\ql)^\prime = \left[\left(\frac{\pi}{b(\mu)^2}-\frac{\qe^2}{8}\frac{\mathscr D(\mu)}{b(\mu)^2}\right)\frac{b(\mu)c(\ql)^\prime-c(\ql)b(\mu)^\prime}{\mu-\ql}\right]_{\frac{\ql}{\mu},\ql_-},
\end{equation}
here the operator $\mathscr D(\mu)$ is defined by
\begin{equation*}
\mathscr D(\mu) = b(\mu)^2\qp_x^2-b(\mu)b(\mu)^\prime\qp_x+(b(\mu)^\prime)^2-b(\mu)b(\mu)^{\prime\prime}.
\end{equation*}
The lemma then follows from \eqref{bc-prime}.
\end{proof}

\begin{Th}\mbox{}
\begin{enumerate}
	\item
There exist differential polynomials $\Phi_{k}^n$ such that $\diff{R_{k+1}}{\qt_n} = (\Phi_{k}^n)^\prime$.
\item
The following equations:
\begin{align}
\label{kdvf-t}
\diff{f_k}{t_n} &= \Qo_{k,n};\\
\label{ff-tau}
\diff{f_k}{\qt_n} &= \Phi_k^n,
\end{align}
together with the flows \eqref{u-t}--\eqref{seg-tau} give the super tau-cover of the KdV hierarchy.
\end{enumerate}
\end{Th}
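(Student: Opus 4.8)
The plan is to treat the two parts separately. Part~(1) is essentially a repackaging of Lemma~\ref{temp4-1} together with the generating-function identity \eqref{c-t}, and part~(2) is the standard tau-cover compatibility check: once $\Qo_{k,n}$ and $\Phi_k^n$ are available as differential polynomials, the content is that all the flows in the system \eqref{u-t}--\eqref{seg-tau}, \eqref{kdvf-t}, \eqref{ff-tau} pairwise commute, and the only commutators not already supplied by the previous theorem are those acting on the variables $f_k$.

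For part~(1), I would first record that by cross-differentiating $\partial u/\partial t_k=R_{k+1}'$ and $\partial u/\partial\tau_n=\sigma_n'$ and using the commutativity of these flows (the theorem just proved), $\partial R_{k+1}/\partial\tau_n$ coincides with a $t_k$-derivative of the super variables $\sigma$, hence is read off from \eqref{c-t}. It therefore suffices to exhibit the right-hand side of \eqref{c-t} as a total $x$-derivative in the generating-function sense. I would write its numerator as
\[
b(\mu)c(\lambda)'-c(\lambda)b(\mu)' \;=\; 2\,b(\mu)c(\lambda)' \;-\; \partial_x\big(c(\lambda)b(\mu)\big);
\]
the second term is manifestly in $\partial_x\hat{\mathcal{A}}(M)$, and the first is so by Lemma~\ref{temp4-1}. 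Since $\partial_x$ commutes with multiplication by the $\tfrac{\lambda}{\mu}$-expansion of $\tfrac1{\mu-\lambda}$ and with the operation of taking the negative part in $\lambda$, it follows that $\partial c(\lambda)/\partial t(\mu)=\partial_x G(\lambda,\mu)$ for a series $G$ with coefficients in $\hat{\mathcal{A}}(M)$. Extracting the coefficient of the monomial in $\lambda$ and $\mu$ corresponding to $\partial R_{k+1}/\partial\tau_n$ then exhibits it as $\partial_x$ of a differential polynomial, and I would take $\Phi_k^n$ to be that polynomial, normalized (as $\Qo_{k,n}$ is) to have vanishing constant term.

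For part~(2), since none of the flows acts on the $f_k$, their commutativity on $u$ and on $\sigma_k$ is exactly the previous theorem, and the commutativity of the $t$-flows on $f_k$ is the (non-super) tau-cover of KdV recalled above; it remains to prove $[\partial/\partial t_m,\partial/\partial\tau_n]f_k=0$ and $[\partial/\partial\tau_m,\partial/\partial\tau_n]f_k=0$. For each I would differentiate the defining relations $\Qo_{k,m}'=\partial R_{k+1}/\partial t_m$, $(\Phi_k^n)'=\partial R_{k+1}/\partial\tau_n$ once in $x$: the $x$-derivative of the relevant difference of mixed second derivatives of $f_k$ then equals the corresponding (super-)commutator applied to $R_{k+1}$, which vanishes because $R_{k+1}$ is a differential polynomial in $u$ alone and the flows commute on $u$ by the previous theorem. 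Hence the difference lies in $\ker\partial_x=\mathbb C$; in both cases it has positive $\sigma$-degree (degree $1$ in the mixed case, degree $2$ in the purely odd case), and since $\hat{\mathcal{A}}(M)$ contains no nonzero odd constants and no nonzero constants of positive $\sigma$-degree, the difference is $0$. This gives commutativity of the whole system, which is what it means to be the super tau-cover.

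The conceptual heart of the statement — that the odd flows $\partial/\partial\tau_n$ descend to the tau-function, equivalently that $\partial R_{k+1}/\partial\tau_n$ is a total $x$-derivative — has already been settled in Lemma~\ref{temp4-1}, so I expect no genuine obstacle in part~(1). The only point in part~(2) needing care is the vanishing of the $x$-independent obstruction terms, which is handled uniformly by $\ker\partial_x=\mathbb C$ together with the $\mathbb Z_2$- and $\sigma$-gradings; no normalization beyond that of $\Phi_k^n$ and $\Qo_{k,n}$ is required.
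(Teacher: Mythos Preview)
Your proposal is correct and follows essentially the same approach as the paper: part~(1) is deduced from \eqref{c-t} and Lemma~\ref{temp4-1} (you make the reduction to $b(\mu)c(\lambda)'$ explicit via the decomposition $b(\mu)c(\lambda)'-c(\lambda)b(\mu)'=2b(\mu)c(\lambda)'-\partial_x(c(\lambda)b(\mu))$, which is exactly what the paper has in mind), and part~(2) is the argument the paper abbreviates as ``follows from the relations $f_k'=R_{k+1}$''. Your treatment of the constant-term obstruction via $\ker\partial_x=\mathbb C$ and the $\sigma$-grading is the standard way to unpack that remark.
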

\begin{proof}
The first statement follows from \eqref{c-t} and Lemma \ref{temp4-1}. The commutativity of the flows given in the second statement follows from the relations $f_k^\prime = R_{k+1}$. The theorem is proved.
\end{proof}

From these explicit formulae for the super tau-cover of the KdV hierarchy, it is not hard to check that its dispersionless limit is indeed the super tau-cover of the dispersionless KdV hierarchy, which is described in Sec.\,\ref{sec3} and Sec.\,\ref{sec4}. From the expressions of the first two odd flows $\diff{}{\qt_0}$ and $\diff{}{\qt_1}$, we can recover the bihamiltonian structure of the KdV hierarchy given by the local functionals
\begin{equation*}
P_0 = \frac{1}{2}\int \qth\qth^1;\quad P_1 = \frac{1}{2}\int \left(u\qth\qth^1+\frac{\qe^2}{8}\qth\qth^3\right).
\end{equation*}
It remains to show that this super tau-cover of the KdV hierarchy is exactly the deformation of the super tau-cover of the dispersionless KdV hierarchy which admits Virasoro symmetries, and these symmetries act linearly on the tau function of the KdV hierarchy. 

To write down the Virasoro symmetries of the super tau-cover of the KdV hierarchy, we modify the Virasoro operators presented in Example \ref{Vir-kdv} by inserting the dispersion parameter $\qe$ as follows:
\begin{align*}
L_{-1}&=\frac{t_0^2}{2\qe^2}+\sum_{k\geq 0}t_{k+1}\diff{}{t_k}+\sum_{k\geq 0}(k+c_0)\qt_{k+1}\diff{}{\qt_k},\\
L_{m}&=\frac{\qe^2}{2}\sum_{k+l = m-1}\frac{\Qg(k+\frac{3}{2})\Qg(l+\frac{3}{2})}{\Qg(\frac{1}{2})\Qg(\frac{1}{2})}\frac{\qp^2}{\qp t_k\qp t_l}+\sum_{k\geq 0}\frac{\Qg(k+m+\frac{3}{2})}{\Qg(k+\frac{1}{2})}t_k\diff{}{t_{k+m}}\\\nonumber&+\sum_{k\geq 0}(k+c_0)\qt_k\diff{}{\qt_{k+m}}+\frac{1}{16}\qd_{m,0},\quad m\geq 0.
\end{align*}
Note that the tau function $Z_{\mathrm{KdV}}$ of the KdV hierarchy satisfies the relation
\[\qe \frac{\qp\log Z_{\mathrm{KdV}}}{\qp t_n}=f_n,\quad n\ge 0,\]
and the requirement of linear actions of the Virasoro symmetries suggests that these symmetries are given by the flows
\[\frac{\qp Z_{\mathrm{KdV}}}{\qp s_m}=L_mZ_{\mathrm{KdV}},\quad m\ge -1.\]
Thus the actions of these Virasoro symmetries on the one-point functions are given by
\begin{align}
\label{fkdv-s-1}
\diff{f_n}{s_{-1}}=&\frac{t_0}{\qe}\qd_{n,0}+f_{n-1}+\sum_{k\geq 0}t_{k+1}\diff{f_n}{t_k}+\sum_{k\geq 0}(k+c_0)\qt_{k+1}\diff{f_n}{\qt_k}\\
\label{fkdv-s-m}
\diff{f_n}{s_m}=&\frac{\qe^2}{2}\sum_{k+l = m-1}\frac{\Qg(k+\frac{3}{2})\Qg(l+\frac{3}{2})}{\Qg(\frac{1}{2})\Qg(\frac{1}{2})}\frac{\qp^2 f_n}{\qp t_k\qp t_l}\\
&+\qe\sum_{k+l = m-1}\frac{\Qg(k+\frac{3}{2})\Qg(l+\frac{3}{2})}{\Qg(\frac{1}{2})\Qg(\frac{1}{2})}\diff{f_n}{t_k}f_l+\frac{\Qg(n+m+\frac{3}{2})}{\Qg(n+\frac{1}{2})}f_{n+m}\notag\\
&+\sum_{k\geq 0}\frac{\Qg(k+m+\frac{3}{2})}{\Qg(k+\frac{1}{2})}t_k\diff{f_n}{t_{k+m}}+\sum_{k\geq 0}(k+c_0)\qt_k\diff{f_n}{\qt_{k+m}},\quad m\geq 0.\notag
\end{align}
The action of the Virasoro symmetries on the other unknown functions are then given by 
\begin{equation}
\label{fkdv-v}
\diff{u}{s_m}:=\qe\diff{}{t_0}\diff{f_0}{s_m},\quad\diff{\qs_n}{s_m}:=\qe\diff{}{\qt_n}\diff{f_0}{s_m}.
\end{equation}
The following theorem shows that the above defined flows indeed give the Virasoro symmetries of the super tau-cover of the KdV hierarchy.
\begin{Th}
The flows \eqref{u-t}--\eqref{seg-tau}, \eqref{kdvf-t}, \eqref{ff-tau} of the super tau-cover of the KdV hierarchy and the flows \eqref{fkdv-s-1}--\eqref{fkdv-v} satisfy the following commutation relations:
\begin{equation*}
\left[\diff{}{t_k},\diff{}{s_m}\right]=0,\quad \left[\diff{}{\tau_k},\diff{}{s_m}\right]=0,\quad \left[\diff{}{s_n},\diff{}{s_m}\right]=(m-n)\diff{}{s_{n+m}},
\end{equation*}
where $m, n\ge -1,\, k\ge 0$.
\end{Th}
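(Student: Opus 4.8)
The plan is to imitate the proof of Theorem~\ref{super-3}, splitting each Virasoro flow as $\partial/\partial s_m=\partial/\partial s_m^{\,\mathrm{ev}}+\partial/\partial s_m^{\,\mathrm{odd}}$, where $\partial/\partial s_m^{\,\mathrm{ev}}$ is the classical Virasoro symmetry of the dispersive KdV hierarchy attached to the $\qe$-deformed operators $L_m^{even}$ and $\partial/\partial s_m^{\,\mathrm{odd}}$ is the linear-in-$\qt$ vector field coming from $L_m^{odd}=\sum_k(k+c_0)\qt_k\,\partial/\partial\qt_{k+m}$, and then to verify the three families of commutators on the generators $u$, $f_k$, $\qs_k$, $\Phi_k^n$ of the super tau-cover. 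Since $\qp_x=\partial/\partial t_0$ commutes with $\partial/\partial\qt_k$ structurally and with $\partial/\partial s_m$ once the $k=0$ case of the first assertion is established, and since $f_k'=R_{k+1}$, $(\Phi_k^n)'=\qp R_{k+1}/\qp\qt_n$ and $\qs_{k+1}'=\mathcal P_1\qs_k$ are differential polynomials in $u,\qs$, the verification on $\qs_k$ $(k\ge 1)$ and on $\Phi_k^n$ reduces, by induction on the indices using the recursion \eqref{rec-eta} and the absence of nonzero odd constants in the ring, to the verification on $u$, $f_k$ and $\qs_0=\qth$.

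First I would record the classical input. The relations $[\partial/\partial t_k,\partial/\partial s_m^{\,\mathrm{ev}}]u=0$ and $[\partial/\partial t_k,\partial/\partial s_m^{\,\mathrm{ev}}]f_k=0$, together with $[\partial/\partial s_j^{\,\mathrm{ev}},\partial/\partial s_m^{\,\mathrm{ev}}]=(m-j)\,\partial/\partial s_{j+m}^{\,\mathrm{ev}}$ on $u$ and $f_k$, are precisely the statement that the operators $L_m^{even}$ generate symmetries of the tau-cover of the KdV hierarchy acting linearly on $Z_{\mathrm{KdV}}$, which is well known \cite{dubrovin1999frobenius,dubrovin2001normal}; as $L_m^{even}$ contains no odd time, $\partial/\partial s_m^{\,\mathrm{ev}}$ acts on $u$ and $f_k$ through $u,f$ alone, so these follow from the cited results. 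The odd pieces are elementary: the coefficients $\qt_j$ of $\partial/\partial s_m^{\,\mathrm{odd}}$ are annihilated by every $\partial/\partial t_l$, so $[\partial/\partial t_l,\partial/\partial s_m^{\,\mathrm{odd}}]=\sum_j(j+c_0)\qt_j[\partial/\partial t_l,\partial/\partial\qt_{j+m}]=0$ by the commutativity of the even and odd flows of the super tau-cover; $[\partial/\partial s_j^{\,\mathrm{odd}},\partial/\partial s_m^{\,\mathrm{odd}}]=(m-j)\,\partial/\partial s_{j+m}^{\,\mathrm{odd}}$ is the easy half of Lemma~\ref{Vir}; and the cross brackets $[\partial/\partial s_j^{\,\mathrm{ev}},\partial/\partial s_m^{\,\mathrm{odd}}]$ vanish since $L_j^{even}$ involves no $\qt$ and $L_m^{odd}$ no even variable. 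Finally the $\qt$--$s$ commutators on $u$ and $f_k$ are trivial by the defining relations \eqref{ff-tau}, \eqref{fkdv-v} and the corresponding definition of $\partial\Phi_k^n/\partial s_m$, exactly as in the proof of Theorem~\ref{super-3}.

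The substance of the argument is therefore the vanishing of $[\partial/\partial t_k,\partial/\partial s_m]$ and $[\partial/\partial\qt_k,\partial/\partial s_m]$ on $\qs_k$ and $\Phi_k^n$, the delicate point being that the odd part of $\partial/\partial s_m$ already moves $\qs_0=\qth$, so these cannot be deduced cheaply from the $u,f$-relations. Following the scheme after Theorem~\ref{super-3}, the $t$--$s$ case reduces, via the relation between the coefficients of $L_m$ and powers of the (here one-dimensional) Euler field and the structure of $\Qo_{k,n}$, to a commutator identity of the shape ``$\partial/\partial\qt_n$ applied to a function of $u$ commutes with an Euler-type operator up to the explicit term carried by $\qp u/\qp\qt_n=\qs_n'$'', which holds for formal reasons; the $\qt$--$s$ case reduces to the identity $[\partial/\partial s_m,\partial/\partial\qt_n\,\partial/\partial\qt_k]f_0=0$, the dispersive analogue of the identity ``$A=B$'' of the lemma after Theorem~\ref{super-3}. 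I would prove this last identity by writing everything through the generating functions $b(\ql)$, $c(\ql)$ and the evolution formulas \eqref{c-t}, \eqref{c-tau}, checking it directly for small $m$ (say $-1\le m\le 2$) and then propagating to all $m$ by induction using $[L_{m+1},L_{-1}]=(m+2)L_m$. The main obstacle is precisely this step: the $\qe^2$-terms in $L_m^{even}$ produce dispersive corrections in $\partial\qs_k/\partial s_m$, and one must check that these remain compatible with the bihamiltonian recursion \eqref{rec-eta} for every $m$; I expect this to require either the explicit low-$m$ computations combined with the inductive step just mentioned, or a Lax-pair realization of the Virasoro flows on $u$ that makes the $\qe$-dependence transparent. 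Once the $s_m$ are established as symmetries, the Virasoro commutation relations $[\partial/\partial s_j,\partial/\partial s_m]=(m-j)\,\partial/\partial s_{j+m}$ follow on all generators in the standard way from $[L_j,L_m]=(j-m)L_{j+m}$ of Lemma~\ref{Vir}, completing the proof.
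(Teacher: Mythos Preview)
Your proposal is correct and follows essentially the same approach as the paper, which simply states that the proof is a straightforward computation mirroring the induction procedure of Theorem~\ref{super-3} and omits the details. One small slip: the bracket $[L_{m+1},L_{-1}]=(m+2)L_m$ you cite for the induction step goes in the wrong direction (it produces $L_m$ from $L_{m+1}$); to propagate upward from the verified range $-1\le m\le 2$ you want instead a relation such as $[L_1,L_m]=(1-m)L_{m+1}$ or $[L_2,L_m]=(2-m)L_{m+2}$.
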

\begin{proof}
The proof is a straightforward computation, using a similar induction procedure as we do in the proof of Theorem \ref{super-3}. We omit the details here.
\end{proof}


\section{Conclusion}
\label{sec6}
For an arbitrary Frobenius manifold $M$, we construct the super tau-cover of the associated principal hierarchy in Theorems \ref{super-1}, \ref{super-2} and \ref{super-3}. We also illustrate the existence of a deformation of the super tau-cover of the dispersionless KdV hierarchy which possesses Virasoro symmetries acting linearly on the tau function of the KdV hierarchy. 

According to \cite{dubrovin2001normal}, the KdV hierarchy is uniquely determined by the linearizable Virasoro symmetries.
Our construction of the super extension of the KdV hierarchy given in Sec.\,\ref{sec5} implies that this deformation of the super tau-cover of the dispersionless KdV hierarchy with linearizable Virasoro symmetries is unique up to the addition of some constants to
the flows $\diff{\sigma_k}{\tau_n}$. If we require that these flows belong to $\hat{\mathcal{A}}^2(M)$, then they are also uniquely fixed.

In general, we have the following conjecture.
\begin{Conj}
For a semisimple Frobenius manifold $M$, there exists a unique homogeneous deformation of the super tau cover of its principal hierarchy
such that it possesses Virasoro symmetries which are induced from the following Virasoro symmetries of its tau function $Z_M$:
\begin{equation}
\diff{Z_{M}}{s_m}=L_m Z_M,\quad m\ge -1,
\end{equation}
where $L_m$ is given in \eqref{L}.
\end{Conj}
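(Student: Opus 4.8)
The plan is to build the deformation in two layers, the even (bosonic) sector and the odd (fermionic) sector, and to fix both by the single requirement that the Virasoro operators $L_m$ act linearly on $Z_M$. First I would treat the even sector by invoking the Dubrovin--Zhang theory of the topological deformation: for a semisimple Frobenius manifold $M$ the principal hierarchy \eqref{ph} admits a topological deformation which is unique among homogeneous quasi-Miura deformations and whose tau function $Z_M$ satisfies the linearized symmetry $\diff{Z_M}{s_m}=L^{even}_m Z_M$. This simultaneously supplies the deformed even flows $\diff{v^\qa}{t^{\qb,p}}$, the deformed two-point functions $\Qo_{\qa,p;\qb,q}$, and the deformed one-point functions $f_{\qa,p}$ read off from $\log Z_M$, which satisfy the deformed analogues of \eqref{f-t} and \eqref{phi-t}; this settles existence, uniqueness, and the linear action of $L^{even}_m$ on the even part.

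For the odd sector, which is the genuinely new content, I would start from the deformed bihamiltonian structure. As recalled in the introduction, a flat exact semisimple bihamiltonian structure of hydrodynamic type carries a homogeneous deformation with constant central invariants, unique up to Miura transformation; taking the central invariants equal to $\frac1{24}$ is precisely the choice conjectured to reproduce the topological deformation. Writing $\mathcal P_0,\mathcal P_1$ for the resulting deformed Hamiltonian operators, I would define, following Section \ref{sec2}, the deformed super variables $\qs_{\qa,k}$ through the deformed recursion $\mathcal P_0^{\qa\qb}\qs_{\qb,k+1}=\mathcal P_1^{\qa\qb}\qs_{\qb,k}$, and introduce the deformed odd flows $\diff{v^\qa}{\qt_m}$ together with the evolution of $\qs_{\qa,k}$ as the odd Hamiltonian flows $D_{P_0},D_{P_1}$ and their bihamiltonian descendants, demanding that the dispersionless limit reproduce Theorem \ref{super-1}.

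The crux is then to re-establish, in the deformed setting, the three structural facts that made the dispersionless construction work. (i) The deformed analogue of Lemma \ref{phi} and Lemma \ref{exp-phi}: that $\diff{h_{\qa,p}}{\qt_n}$ and the mixed second derivatives of the deformed Hamiltonian densities remain total $x$-derivatives, so that deformed variables $\Phi^n_{\qa,p}$ and $\Qd^{k,n}_{\qa,p}$ exist. I expect this to go through by the same resonance recursion \eqref{phi-rec}, now driven by the deformed quasi-homogeneity inherited from the homogeneity of the deformation, the essential input being that homogeneity forces the same Euler-grading identities used in Lemma \ref{phi}. (ii) Mutual commutativity of all deformed flows, the deformed analogue of Theorems \ref{super-1} and \ref{super-2}. (iii) That the odd part $L^{odd}_m$ of the Virasoro operator, which is left \emph{undeformed} since it is linear in $\qt_k$, still commutes with every deformed flow and induces a symmetry of the full super tau-cover, the deformed analogue of Theorem \ref{super-3}. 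As in Theorem \ref{super-3}, step (iii) would be reduced to an Euler-vector-field identity relating $\diff{}{\qt_n}$ to the commutator $[\qp^\qg,\,\cdot\,]$ acting through the deformed $\Qo_{\qa,p;\qb,q}$, with the even half already provided by the linear action of $L^{even}_m$ on $Z_M$ and the two halves decoupled by $[L^{even}_m,L^{odd}_m]=0$ from Lemma \ref{Vir}.

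Uniqueness would follow from a rigidity argument: the homogeneity grading fixes the deformation order by order in $\qe$ up to finitely many constants at each order, and the linearization condition $\diff{Z_M}{s_m}=L_m Z_M$, together with the requirement that the odd flows $\diff{\qs_k}{\qt_n}$ lie in $\hat{\mathcal A}^2(M)$, pins down these constants, exactly as in the KdV computation of Section \ref{sec5}. The hard part will be step (iii): in the KdV case the commutation of the deformed odd flows with the Virasoro symmetries came for free from the super-Lax-pair zero-curvature relation \eqref{zero-cur}, but a general semisimple $M$ has no Lax pair, so one must verify the deformed analogue of the lengthy $A=B$ identity directly from the deformed bihamiltonian recursion and the loop equation. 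Since the deformed odd flows encode the \emph{nonlocal} higher Hamiltonian structures $\mathcal P_m=\mathcal R^m\mathcal P_0$, controlling their interaction with the Virasoro operators --- in particular proving the consistency of the induced action $\diff{\Phi^n_{\qa,p}}{s_m}$ --- is where the genuine difficulty lies, and is exactly what keeps the statement a conjecture rather than a theorem.
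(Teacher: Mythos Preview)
The statement you are addressing is labeled \textbf{Conjecture} in the paper, not Theorem; the paper offers \emph{no proof} of it. Section~\ref{sec6} merely states the conjecture, motivates it by the KdV example of Section~\ref{sec5}, and records the corollary that would follow. There is therefore nothing in the paper to compare your proposal against.

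What you have written is not a proof but a research outline, and you are candid about this in your final paragraph. The outline is reasonable as a strategy: split into even and odd sectors, import the even sector from Dubrovin--Zhang, build the odd sector from the deformed bihamiltonian recursion with central invariants $\frac1{24}$, and then attempt to reprove the structural Lemmas \ref{phi}, \ref{exp-phi} and the $A=B$ identity of Theorem \ref{super-3} in the deformed setting. But each of steps (i)--(iii) is a genuine open problem. In particular, the very assertion that the topological deformation of the principal hierarchy carries a \emph{differential-polynomial} bihamiltonian structure (needed before you can even write the deformed recursion $\mathcal P_0^{\qa\qb}\qs_{\qb,k+1}=\mathcal P_1^{\qa\qb}\qs_{\qb,k}$) is itself part of the conjectural picture the paper is trying to approach; this is precisely the content of the Corollary stated immediately after the Conjecture. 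Your plan presupposes what the conjecture is meant to yield.

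So the honest assessment is: there is no gap relative to the paper because the paper proves nothing here; your proposal is a plausible attack plan whose hard steps remain open, as you yourself note.
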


Note that the first two odd flows of the super tau cover are exactly the bihamiltonian structure of the even flows,
so if the above conjecture holds true, then we have the following corollary.
\begin{Cor}
For a semisimple Frobenius manifold $M$, there exists a unique deformation of its principal hierarchy such that
it possesses a differential polynomial bihamiltonian structure and a set of Virasoro symmetries which act linearly on its tau function. 
\end{Cor}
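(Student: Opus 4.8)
The plan is to deduce the Corollary from the Conjecture together with the observation, recalled in Sec.~\ref{sec2} (see \eqref{zh-7}), that the odd flows $\diff{}{\tau_0}$ and $\diff{}{\tau_1}$ of the super tau-cover are precisely the Hamiltonian flows associated with the two bivectors $P_0,P_1$ that form the bihamiltonian structure of the principal hierarchy.

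For the existence part, assume the Conjecture and let $\mathcal{S}$ be the unique homogeneous deformation of the super tau-cover of the principal hierarchy of $M$ carrying the Virasoro symmetries induced from $\diff{Z_M}{s_m}=L_mZ_M$. Restricting $\mathcal{S}$ to the variables $v^\alpha$ and the times $t^{\beta,p}$ produces a deformation of the principal hierarchy. The deformed odd flows $\diff{}{\tau_0},\diff{}{\tau_1}$ of $\mathcal{S}$ remain Hamiltonian flows of deformed bivectors $\tilde P_0,\tilde P_1\in\mathcal{\hat F}^2(M)$ whose coefficients are differential polynomials, the deformation being homogeneous and of differential-polynomial type; and the commutation relations $[\diff{}{\tau_i},\diff{}{\tau_j}]=0$ inside $\mathcal{S}$ translate, through \eqref{ham-vect}, into $[\tilde P_i,\tilde P_j]=0$ for $i,j=0,1$. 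Hence $(\tilde P_0,\tilde P_1)$ is a differential-polynomial bihamiltonian structure of the deformed hierarchy, the Virasoro symmetries of $\mathcal{S}$ restrict to Virasoro symmetries of this deformation, and by construction they act linearly on $Z_M$.

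For uniqueness, the strategy is to show that any deformation of the principal hierarchy possessing a differential-polynomial bihamiltonian structure $(\tilde P_0,\tilde P_1)$ and linearizable Virasoro symmetries can be lifted, canonically, to a homogeneous deformation of the super tau-cover, so that the uniqueness clause of the Conjecture forces it to coincide with $\mathcal{S}$ and hence the underlying deformation of the principal hierarchy to coincide with the one above. The lift is assembled following Sec.~\ref{sec3} and Sec.~\ref{sec4}: the deformed super variables $\sigma_{\alpha,k}$ are introduced via the deformed recursion $\tilde{\mathcal P}_0\,\sigma_{\alpha,k+1}=\tilde{\mathcal P}_1\,\sigma_{\alpha,k}$; the odd flows $\diff{}{\tau_0},\diff{}{\tau_1}$ are the Hamiltonian flows of $\tilde P_0,\tilde P_1$ and the higher $\diff{}{\tau_m}$ are fixed by $\diff{v}{\tau_m}=\tilde{\mathcal P}_1\circ\tilde{\mathcal P}_0^{-1}\bigl(\diff{v}{\tau_{m-1}}\bigr)$; the one-point functions $f_{\alpha,p}$ come from the tau-symmetry of the deformed hierarchy; and the auxiliary odd functions $\Phi_{\alpha,p}^n$ together with their flows are recovered from the deformed analogues of Lemmas~\ref{phi} and \ref{exp-phi}, using the quasi-homogeneity forced by the homogeneity of the deformation. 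Linearity of the Virasoro action on $Z_M$ then identifies the lifted symmetries with those in the Conjecture.

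The main obstacle is precisely this lifting step: one must verify that in the deformed, possibly resonant, setting the quantities $\diff{h_{\alpha,p}}{\tau_n}$, $\tfrac{\qp^2 h_{\alpha,p}}{\qp t^{\beta,q}\qp\tau_n}$ and $\tfrac{\qp^2 h_{\alpha,p}}{\qp\tau_k\qp\tau_n}$ are still total $x$-derivatives, so that $f_{\alpha,p}$ and $\Phi_{\alpha,p}^n$ are well defined, and that the whole collection of flows closes into a genuine homogeneous deformation of the super tau-cover to which the Conjecture applies. Semisimplicity of $M$ — which supplies the deformed tau structure and controls the deformation of the bihamiltonian structure through its central invariants — is what makes this feasible; the remaining verifications are a bookkeeping of homogeneity degrees and of the commutation relations already recorded in Theorem~\ref{super-3}.
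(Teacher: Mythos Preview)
Your approach matches the paper's. The Corollary is not given a standalone proof there; it is presented as a consequence of the preceding Conjecture, and the entire justification is the single sentence immediately before the statement: the first two odd flows of the super tau-cover are exactly the bihamiltonian structure of the even flows, so the unique deformed super tau-cover promised by the Conjecture yields in particular a deformed principal hierarchy with a differential-polynomial bihamiltonian structure and linearizable Virasoro symmetries.

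Your existence paragraph is precisely this observation unpacked. Your uniqueness paragraph---lifting an arbitrary deformation of the principal hierarchy with the stated properties back to a homogeneous deformation of the full super tau-cover and then invoking the uniqueness clause of the Conjecture---goes beyond what the paper writes. The paper does not argue uniqueness separately at all; it simply inherits it from the Conjecture by implication, without spelling out the lift. The obstacle you flag (verifying the deformed analogues of Lemmas~\ref{phi} and~\ref{exp-phi} and checking that the lifted object is homogeneous so that the Conjecture applies) is genuine and is not addressed in the paper; since the Corollary is in any case conditional on an open conjecture, the authors treat it as a programmatic statement rather than a theorem with a complete proof. So your caveat is well placed, not a misreading.
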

More precisely, the above corollary shows that the integrable hierarchy satisfying all four axioms of Dubrovin-Zhang theory 
given in \cite{dubrovin2001normal} does exist.

Besides the above applications in the theory of Frobenius manifolds, one can also generalize the notion of super tau cover
to more general integrable systems, such as the ones with nonlocal bihamiltonian structures. For example, we expect that
there also exists super tau cover for the Sawada-Kotera hierarchy \cite{sawada1974method}, Kaup-Kuperschmidt hierarchy \cite{kaup1980inverse}, and other Drinfeld-Sokolov hierarchies \cite{drinfel1985lie, liu2020drinfeld}
associated to twisted affine Lie algebras.

\medskip

\noindent Si-Qi Liu,

\noindent Department of Mathematical Sciences, Tsinghua University \\ 
Beijing 100084, P.R.~China\\
liusq@tsinghua.edu.cn
\medskip

\noindent Zhe Wang,

\noindent Department of Mathematical Sciences, Tsinghua University \\ 
Beijing 100084, P.R.~China\\
zhe-wang17@mails.tsinghua.edu.cn
\medskip

\noindent Youjin Zhang,

\noindent Department of Mathematical Sciences, Tsinghua University \\ 
Beijing 100084, P.R.~China\\
youjin@tsinghua.edu.cn

\end{document}